\definecolor{citation}{rgb}{0.2,0.58,0.2} 
\definecolor{formula}{rgb}{0.1,0.2,0.6}
\definecolor{url}{rgb}{0.3,0,0.5}
\newcommand{\reqnomode}{\tagsleft@false}
\def\dys{\displaystyle}
\def\vs{\vspace{1mm}}
\def\dxy{\,{\rm d}x{\rm d}y}
\def\dx{\,{\rm d}x}
\def\dy{\,{\rm d}y}
\DeclareRobustCommand*{\bfseries}{%
  \not@math@alphabet\bfseries\mathbf
  \fontseries\bfdefault\selectfont
  \boldmath
}
\DeclareMathOperator*{\osc}{osc}
\newlength{\defbaselineskip}
\newcommand{\setlinespacing}[1]
           {\setlength{\baselineskip}{#1 \defbaselineskip}}
\newtheorem{theorem}{Theorem}
\newtheorem{definition}{Definition}
\newtheorem{remark}{Remark}
\newtheorem{lemma}{Lemma}
\newtheorem{proposition}{Proposition}
\numberwithin{equation}{section}
\newcommand{\rr}{\varrho}
\newcommand{\snr}[1]{\lvert #1\rvert}
\newcommand{\nr}[1]{\lVert #1 \rVert}
\newcommand{\rif}[1]{(\ref{#1})}
\newcommand{\stackleq}[1]{\stackrel{\rif{#1}}{ \leq}}
\title[H\"older regularity for nonlocal double phase equations]{H\"older regularity \\ for 
  nonlocal double phase equations}
\author{Cristiana De Filippis}  \address{Cristiana De Filippis\\Mathematical Institute, University of Oxford\\ Andrew Wiles Building, Radcliffe Observatory Quarter, Woodstock Road, Oxford, OX26GG, Oxford, United Kingdom} \email{Cristiana.DeFilippis@maths.ox.ac.uk}
\author{Giampiero Palatucci}  \address{Giampiero Palatucci\\Dipartimento di Scienze Matematiche, Fisiche e Informatiche, Universit\`a di Parma\\ Parco Area delle Scienze 53/a, Campus, 43124 Parma, Italy} \email{giampiero.palatucci@unipr.it}
\begin{document}

\subjclass[2010]{Primary 35D10, 35B45;
Secondary 35B05, 35R05, 47G20, 60J75\vspace{1mm}}

\keywords{Quasilinear nonlocal operators, fractional Sobolev spaces, viscosity solutions, double phase functionals, H\"older continuity\vspace{1mm}}

\thanks{{\it Aknowledgements.}\ \, The first author has been supported by the Engineering and Physical Sciences Research Council (EPSRC): CDT Grant Ref. EP/L015811/1. 
The authors would like to thank the referees for their useful suggestions, which allowed to improve the manuscript.
\vspace{1mm}}

\maketitle

\begin{abstract}
We prove some regularity estimates for viscosity solutions to a class of possible degenerate and singular  integro-differential equations whose leading operator switches between two different types of fractional elliptic phases, according to the zero set of a modulating coefficient $a=a(\cdot,\cdot)$. The model case is driven by the following nonlocal double phase operator,
$$
\int \!\frac{|u(x)-u(y)|^{p-2}(u(x)-u(y))}{|x-y|^{n+sp}}\!\dy 
+ \int \!a(x,y)\frac{|u(x)-u(y)|^{q-2}(u(x)-u(y))}{|x-y|^{n+tq}}\!\dy,
$$
where $q\geq p$ and $a(\cdot,\cdot)\geqq 0$.
Our results do also apply for inhomogeneous equations, for very general classes of measurable kernels. 
By simply assuming the boundedness of the modulating coefficient, we are able to prove that the solutions are H\"older continuous, whereas similar sharp results for the classical local case 
do~require~$a$~to be H\"older continuous.  
 To our knowledge, this is the first (regularity) result for nonlocal double phase problems.
\end{abstract}

\vspace{3mm}

{\small \tableofcontents}

\setlinespacing{1.08}

\newcommand{\R}{\mathds{R}^n}

\vspace{-19cm}
\begin{center}
 \rule{11.9cm}{0.5pt}\\[-0.1cm] 
{\sc {\small To appear in}\, {\it J.~Differential~Equations}}
\\[-0.25cm] \rule{11.9cm}{0.5pt}
\end{center}

\newpage

\section{Nonlocal double phase problems}
We deal with nonlocal {\it double phase equations\,}; that is, a class of, possible singular and degenerate, integro-differential equations whose leading operator switches between two different fractional elliptic phases according to the zero set of the {\it modulating coefficient} $a=a(\cdot,\cdot)$. These equations  are indeed driven by the following nonlocal double phase operator,
\begin{eqnarray}\label{operatore} 
\qquad \  \mathcal{L}(u)\!\!\!\!\!&:=&\!\!\!\!\!{P.~\!V.}\!\dys\int_{\R} {|u(x)-u(y)|^{p-2}(u(x)-u(y))}K_{sp}(x,y)\dy 
\\*
 && +\ {P.~\!V.}\!\dys\int_{\R} a(x,y){|u(x)-u(y)|^{q-2}(u(x)-u(y))}K_{tq}(x,y)\dy, \ \ \ x\in\R, \nonumber
\end{eqnarray}
where the involved kernels $K_{sp}, K_{tq}:\R\times\R\to(0,\infty)$ are measurable functions of differentiability orders $s,t\in(0,1)$ and summability exponents $p,q\in (1,\infty)$, respectively. Here~$P.~\!V.$~stands for the principal value.
We immediately refer to Section~\ref{sec_preliminaries} for the precise assumptions on the involved quantities in the general framework we are considering. In order to simplify, one can just keep in mind the model case when the kernels $K_{sp}$ and $K_{tq}$ do coincide with the Gagliardo kernels $|x-y|^{-n-sp}$ and $|x-y|^{-n-tq}$, respectively; i.~\!e., the case when the corresponding operator $\mathcal{L}$ does reduce to a sum of a pure $p$-fractional Laplacian~$(-\Delta)^s_p$ and an integro-differential operator whose $(t,q)$-kernel is perturbated by the modulating coefficient $a(\cdot,\cdot)$. 
\vspace{2mm}

Such a case can be plainly seen as the nonlocal analog of the classical double phase problems, whose chief model is related to by the following energy functional,
\begin{equation}\label{locale}
\mathcal{F}(u):=\dys \int\big(|Du|^p+a(x)|Du|^q\big)\dx, \quad 1<p\leq q,
\end{equation}
naturally defined for Sobolev functions.   
The  functional
 $\mathcal{F}$   originally
arose in Homogenization Theory and it is related to the so-called Lavrentiev phenomenon; see for instance~\cite{Zhi86,Zhi95}. From a regularity point of view, even without the presence of the modulating coefficient~$a(\cdot)$, such  
 functional presents very interesting features, falling in the class of the non-uniformly elliptic ones  having $(p,q)$-growth conditions. Thus, it cannot be treated via the standard available regularity methods; we refer the reader to the pioneering work by Marcellini~\cite{Mar89,Mar91,Mar93,Mar96}, where the fundamentals of the $(p,q)$-regularity theory have been settled. One of the main points in the important $(p,q)$-theory is the lack of regularity results for more general functionals whose integrand depends on $x$ possibly in a non-smooth way. In this respect, in view of
the presence of the modulating coefficient, the functional~$\mathcal{F}$ in~\eqref{locale} is conceivably the prototype of the worst kind of interplay between the coefficient in~$x$ and the $(p,q)$-growth, since it
 clearly brings a change of ellipticity/growth  occurring on the set~$\{ a=0\}$. Let us consider the significant case when $q>p$: in the points where $a>0$ the functional~$\mathcal{F}$~reduces to a non-standard $(p,q)$-growth functional, which exhibits a $q$-growth in the gradient. On the contrary, in the points where $a=0$ the functional exhibits a $p$-growth in the gradient. This is the main feature of this class of functionals and it is basically the reason why they have been firstly introduced by~Zhikov in the aforementioned papers in order to describe the behavior of strongly anisotropic materials whose hardening properties drastically change with the point. Such an important phase-transition problem is thus described by the functional~$\mathcal{F}$, where the regulation of the mixture between two different materials, with $p$ and $q$ hardening, is modulated by the coefficient~$a(\cdot)$, which simultaneously brought new difficulties in the  corresponding regularity theory. Indeed, even basic regularity issues for these double phase problems have remained unsolved for several decades. The first result in this spirit was recently due to~Colombo and Mingione in~\cite{CM15}, where, amongst other achievements, they proved 
 {\it H\"older continuity for the weak solutions by assuming that the modulating coefficient $a(\cdot)$ is H\"older continuous as well}. This important result is also proven to be sharp both with respect to the H\"older continuity assumption on the modulating coefficient and with respect to the result obtained. See in particular Theorem~1.1 in~\cite{CM15}, and also Remarks~\ref{rem_lavri}-\ref{rem_limit} in forthcoming Section~\ref{sec_preliminaries} and Section~\ref{sec_clarification} for a  discussion about the restriction on the involved quantities.
\vspace{2mm}

 Starting from the work of Colombo and Mingione,  
  despite its relatively short history, double phase problems have already evolved into an elaborate theory with several connections to other branches; the literature is too wide to attempt any comprehensive treatment in a single paper. We refer, for instance, to~\cite{BCM15,BCM18,BO17,CM15b,DEF19,DFO18,OK18,PR18} and the references therein.
\vspace{2mm}

Let us come back to the equations we are dealing with. In the present paper, we consider the nonlocal version of the double phase problems described above. More in general, we also consider inhomogeneous equations with a given datum $f\in L_{\rm loc}^\infty$. Our main result reads as follows:
\begin{eqnarray}\label{nonlocale}
&&\text{\it Suppose that the modulating coefficient } a(\cdot,\cdot) \ \text{\it  is such that } 0\leqq a\leq M, \\
&&\text{\it then any bounded viscosity solution } u\  \text{\it  to } \, \mathcal{L}u=f \ \text{\it  is  locally H\"older continuous}.\label{sushi}
\end{eqnarray}
For the precise assumptions and statement, we refer to forthcoming Section~\ref{sec_preliminaries}, and in particular to Theorem~\ref{teo_holder} there. 

\vspace{2mm}

 Now, a few observations are in order:\vspace{0.2mm}

$\bullet$ First of all, in contrast with respect to the local case, we are able to prove that (viscosity) solutions to the fractional analog of double phase equations are H\"older continuous by simply assuming the boundedness of the modulating coefficients $a(\cdot,\cdot)$. In the {corresponding} local case, the  
{$C^{1,\beta}$-regularity} result by Colombo and Mingione
 is {crucially} related to the {absence} of the Lavrentiev phenomenon. {This is achieved provided that the ratio $q/p$ is suitably bounded, i.e.: $1\le q/p\le 1+\alpha/p$,}  where the $\alpha\in(0,1]$ is the H\"older exponent of $a(\cdot)$; see \cite[Proposition 3.6]{CM15b} and also the related preliminary result for what concerns the analysis of the Lavrentiev effect in~\cite{ELM04}. Analogously, the much weaker assumption $a\in L^\infty$ in~\eqref{nonlocale} may   
be interpreted as an ineffectiveness of the Lavrentiev phenomenon in the nonlocal framework, and a consequent assumption on the differentiability exponents~$(s,t)$ and summability ones~$(p,q)$ will appear in accordance with the local results.
 
$\bullet$ Second, the equation~in~\eqref{sushi} inherits both the difficulties newly arising from the double phase problems and those naturally arising from the nonlocal character of the involved fractional integro-differential operators. More than this, 
it is worth noticing that the fractional operators~$\mathcal{L}$ in~\eqref{operatore} present as well the typical issues given by their {\it nonlinear} growth behavior, and further efforts are needed due to the presence of merely measurable coefficients in the kernels~$K_{sp}, K_{tq}$. For this, some very important tools recently introduced in the nonlocal theory, as the celebrated Caffarelli-Silvestre $s$-harmonic extension~\cite{CS07}, and 
 other minor successful tricks, as for instance the pseudo-differential commutator compactness in~\cite{PP14} or the energy density estimates in~\cite{PSV13,CMP15,FSV15},
  seem not to be adaptable to the framework considered here.

$\bullet$  Third, to our knowledge, this is the very first regularity result for solutions to nonlocal double phase equations. Even in the very special case when both the differentiability orders and the summability exponents coincide; that is, when $s=t$ and $p=q$, no related results involving a modulating coefficient could be found in the literature; however, it is worth mentioning the fine H\"older estimates in the relevant paper~\cite{KRS14}, where the authors deal with a class of elliptic integro-differential operators with  kernels 
 satisfying lower bounds  on conic subsets, thus
  strongly directionally dependent.
 
 \vspace{1mm}
 
 For what concerns our approach to attack the problem, we extend that in the by-now classical work by Silvestre in~\cite{Sil06}, where the author provides a surprisingly clean and purely analytical proof of H\"older continuity for harmonic functions with respect to a class of integro-differential equations like the fractional Laplace with coefficients. The approach developed by Silvestre also includes the case of variable orders, and it has been proven to be very feasible to attack several problems in the recent nonlocal theory, even in the case of the $p$-fractional Laplace equation, as seen in the recent paper by Lindgren~\cite{Lin16}. Clearly, the mentioned approach cannot be plainly applied to the operator in~\eqref{sushi} because our class of operators lives in the nonstandard $(p,q)$-growth setting, and, even worst, we have to take care of the novelty given by the presence of the modulating coefficient~$a(\cdot,\cdot)$. 
 {In addition, it is worth mentioning that such non-uniform ellipticity together with the interplay of the two differentiability orders via the modulating coefficient~$a$ will  preclude the natural scaling properties of pure fractional Laplace operators. In this respect, the aforementioned proof of the H\"older regularity for solutions to the $p$-fractional Laplace equation via the approach by Silvestre is applicable with no substantial modifications to the inhomogeneous case with a bounded datum~$f$; see Remark~4.3 in~\cite{Sil06} and Lemma~1 in~\cite{Lin16}. On the contrary, the nonlocal double phase equations treated here will require further efforts. Precisely, in order to carefully accomodate the presence of the datum~$f$, we  need to take into account an appropriate analysis of the scaling effects on the double phase equations, which will influence the involved kernels  and the modulating coefficient as well. We refer in particular to the detailed computations about such a nontrivial extension in the forthcoming proofs of Proposition~\ref{prop_cumbersome} and Lemma~\ref{lem_positivity}; see also the appendix.}
 Finally, the wide range of integrability we are considering here will force us to work in three different ranges depending on the interaction between the exponents~$q\geq p$, which can vary from singular to degenerate cases. 

\vspace{2mm}

\mbox{Several questions naturally arise:\vspace{0.2mm} }
\\*
\indent $\bullet$  Firstly, it is worth remarking that the result presented here does apply to bounded solutions. It could be interesting to treat unbounded solutions, namely by truncation and dealing with the resulting error term as a right hand-side, in the same flavour of the papers~\cite{DKP14,DKP16}, where
 the local boundedness for $p$-fractional minimizers is proven, given that a precise quantity, the so-called ``nonlocal tail'', can be controlled; see in particular the local~vs.~nonlocal interpolation estimate in~\cite[Theorem~1.1]{DKP16}. Notice that the approach  in the aforementioned papers --  as well as in the local counterpart of the double phase operators~\cite{CM15,CM15b} -- is  in the spirit of De~Giorgi-Nash-Moser, being addressed to energy minimizers/weak solutions,
 whereas the approach presented here is more in the spirit of Krylov-Safonov. 
 
 $\bullet$ For this, a second natural question is whether or not, and under which assumptions on the structural quantities, the viscosity solutions to nonlocal double phase equations are indeed fractional harmonic functions and/or weak solutions, and vice versa. In this respect, let us observe that one cannot plainly applied the results for  $p$-fractional minimizers as obtained in the recent paper~\cite{KKP17} together with those in the forthcoming paper~\cite{KKL19}, whose proofs seem to be feasible only for a restrict class of kernels which cannot include a general modulating coefficient. 
 
 $\bullet$ Third, in the same spirit of the series of paper by Baroni, Colombo, and Mingione, one would expect higher differentiability and regularity results for the bounded solutions to nonlocal double phase equations. We refer to~\cite{BCM18} and in particular to the paper~\cite{CM15b}, where it has been established the local gradient H\"older continuity of minima provided a sharp balancing condition between the closeness of $p$ and $q$ and the regularity of $a(\cdot)$ is satisfied. For what concerns the nonlocal case, it could be useful to start from  the estimates obtained in the present paper, together with those in the very relevant results obtained for the fractional $p$-Laplace equation by Brasco, Lindgren, and Schikorra~\cite{BL17,BLS18}.
 
 $\bullet$ Also, again in clear accordance with the local counterpart \cite[Theorem 1.2]{CM15b}, one would expect self-improving properties of the solutions to~\eqref{sushi}. For this, one should extend the recent nonlocal Gehring-type theorems proven in~\cite{KMS15,Sch16}. 

$\bullet$ Finally, both in the local and in the nonlocal double phase theory, nothing is known about the regularity  for  solutions to  parabolic double phase equations.

\vspace{2mm}

{\it To summarize}.~The regularity result in the present paper seems to be the very first one for nonlocal double phase equations; i.~\!e., a wide class of equations led by fractional operators exhibiting non-standard growth conditions and non-uniform ellipticity properties, the latter according to the geometry of the level set of a modulating coefficient $a(\cdot,\cdot)$.
Precisely, we prove H\"older continuity for bounded viscosity solutions, by extending to the fractional framework a recent sharp result by Colombo and Mingione, with the substantial difference on the requirement of the modulating coefficient $a(\cdot,\cdot)$, which is here assumed to belong only to $L^\infty$.
We believe our estimates to be important in a forthcoming nonlocal theory of double phase operators.

\vspace{2mm}
\section{Setting of the problem and description of the main result}\label{sec_preliminaries}
In this section we set the problem we are dealing with, and we state our main result, by adding further considerations about the involved quantities and assumptions.
\vspace{1mm}

Consider the following inhomogeneous nonlocal double phase equation,
\begin{equation}\label{problema}
\dys \mathcal{L}u=f,
\end{equation}
where $f$ is bounded and the integro-differential operator~$\mathcal{L}$ is given by
\begin{eqnarray}\label{L}
\mathcal{L}u(x)\!\!&:=&\!\!{P.~\!V.}\!\dys\int_{\mathds{R}^{n}}\snr{u(x)-u(x+y)}^{p-2}(u(x)-u(x+y))K_{sp}(x,y)  \dy\nonumber \\*
&&\!\!+\ {P.~\!V.}\!\dys\int_{\mathds{R}^{n}}a(x,y)\snr{u(x)-u(x+y)}^{q-2}(u(x)-u(x+y))K_{tq}(x,y) \dy.
\end{eqnarray}
In the display above, the symbol ${P.~\!V.}$ stands for ``principal value''; in the rest of the paper, when not important, or clear from the context, we shall omit such a symbol. The measurable kernels $K_{sp}$ and $K_{tq}$ essentially behave like $(s,p)$ and~$(t,q)$-kernels, respectively; see~\cite{DPV12,Pal18} 
 for the basics on fractional Sobolev spaces. More precisely, there exists a positive constant~$\Lambda$ such that
\begin{equation}\label{K}
\begin{cases}
\ {\Lambda^{-1} \snr{y}^{-n-sp}}\le K_{sp}(x,y)\le {\Lambda}{\snr{y}^{-n-sp}},\\[1ex]
\ K_{sp}(x,y)=K_{sp}(x,-y),
\end{cases} \hspace{-2mm}\text{and}\ \ 
\begin{cases}
\ {\Lambda^{-1} \snr{y}^{-n-tq}}\le K_{tq}(x,y)\le {\Lambda}{\snr{y}^{-n-tq}},\\[1ex]
\ K_{tq}(x,y)=K_{tq}(x,-y).
\end{cases}
\end{equation}
The differentiability orders $s,t \in (0,1)$ and the summability exponents $p, q>1$ are required such that 
\begin{equation}\label{pq}
p>\frac{1}{1-{s}} \ \ \mbox{if} \ \ p< 2 ,\qquad q>\frac{1}{1-{t}},
\end{equation}
{and}
\begin{equation}\label{hp_spq}
1\, \le\, \frac{q}{p}\, \leq \, \min \left\{\frac{s}{t},1+s\right\};
\end{equation}
for motivations about the two requirements above we refer to Remarks~\ref{rem_lavri}--\ref{rem_limit} below.
Finally, the modulating coefficient~$a=a(\cdot,\cdot)$ is assumed to be measurable and such that
\begin{equation}\label{a}
0\le a(x,y)\le M \ \  \text{for a.~\!e. } (x,y)\in \mathds{R}^{n}\times \mathds{R}^{n}.
\end{equation}

In order to prove that viscosity solutions~$u$ could behave as classical solutions (see in particular forthcoming Proposition~\ref{prop_classic}), we will naturally require that the corresponding nonlocal double phase energy of~$u$ is finite; i.~\!e.,
\begin{flalign}\label{energiafinita}
\int_{\mathds{R}^{n}}\int_{\mathds{R}^{n}}\frac{\snr{u(x)-u(x+y)}^{p}}{\snr{y}^{n+sp}}+a(x,y)\frac{\snr{u(x)-u(x+y)}^{q}}{\snr{y}^{n+tq}} \dxy<\infty.
\end{flalign}

Our main result is that bounded viscosity solutions to~\eqref{problema} with $f$ bounded are locally H\"older continuous, as stated in Theorem~\ref{teo_holder} below. For the natural definition of viscosity solutions to nonlocal double phase equations, we refer to forthcoming Section~\ref{sec_viscosity}. 
\begin{theorem}\label{teo_holder}
Let the nonlocal double phase operator $\mathcal{L}$ be defined in \eqref{L}, under the assumptions~\eqref{K}--\eqref{energiafinita}, 
and let $f$ be  in $L^\infty(B_2)$.  
 If $u$  
 is a  bounded
  viscosity solution to 
\[
\mathcal{L}u=f \ \ \mbox{in} \ \ B_{2},
\]
then $u\in C^{0,\gamma}(B_{1})$ for some $\gamma=\gamma({\tt{data}})\in (0,1)$. 
\end{theorem}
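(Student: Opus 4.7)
The plan is to adapt the Silvestre--Lindgren approach, proving H\"older continuity at an arbitrary point $x_0 \in B_1$ via iteration of a diminishing-of-oscillation lemma on a geometric sequence of concentric balls, with a carefully tailored rescaling that keeps the rescaled problems within a universal class of equations satisfying the same structural hypotheses \eqref{K}--\eqref{a}. The output is a geometric estimate $\osc_{B_{r^k}(x_0)} u \leq C(1-\theta)^k$, which is the standard form giving $u\in C^{0,\gamma}$ with exponent $\gamma = -\log(1-\theta)/\log(1/r)$ depending only on $\mathtt{data}$.

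The first key step is an \emph{invariance-under-rescaling} statement. Given a bounded viscosity solution $u$ to $\mathcal{L}u=f$ with $\|u\|_{L^\infty(\mathds{R}^n)}\leq 1$, and the affine transformation $v(x):=\lambda^{-1}(u(rx)-c)$ for $r\in(0,1)$, $\lambda\in(0,1]$, a direct computation shows that $v$ solves $\widetilde{\mathcal{L}}v=\widetilde{f}$, where $\widetilde{K}_{sp}$, $\widetilde{K}_{tq}$ still obey \eqref{K} with the same constant $\Lambda$, while the rescaled modulating coefficient acquires a factor of the form $\lambda^{q-p}r^{tq-sp}$, and the rescaled right-hand side carries a factor $\lambda^{1-p}r^{sp}\|f\|_{L^\infty}$. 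Conditions \eqref{pq} and \eqref{hp_spq}, i.e.\ $q/p\leq s/t$ and $q/p\leq 1+s$, are precisely designed so that, choosing $\lambda\approx(1-\theta)$ and $r$ small but universal, the rescaled coefficient $\widetilde{a}$ remains universally bounded and the rescaled datum $\widetilde{f}$ shrinks, so that the iteration may be closed. I would also establish here, as a preliminary reduction (in the spirit of Proposition~\ref{prop_classic} and presumably Proposition~\ref{prop_cumbersome}), that the operator $\mathcal{L}$ can legitimately be evaluated pointwise on sufficiently regular test functions, so that viscosity comparison against explicit barriers makes sense in the presence of the nonlocal tail, using \eqref{energiafinita}.

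The technical heart is the \emph{decay-of-oscillation lemma}: there exist universal $r, \theta, \varepsilon_0, L>0$ and an exponent $\alpha\in(0,sp/(p-1))$ such that, whenever $|u|\leq 1$ in $B_1$, $|u(x)|\leq L(1+|x|^\alpha)$ in $\mathds{R}^n$, and $\|f\|_{L^\infty(B_1)}\leq \varepsilon_0$, one has $\osc_{B_r}u\leq 2(1-\theta)$. I would prove this by choosing the sign of $u$ so that $u$ is nonnegative on a definite portion of $B_{3/4}$ and then propagating positivity to $B_r$ by comparison with an explicit smooth radial barrier $\Phi$ equal to a negative constant in $B_{3/4}$, vanishing on an intermediate annulus and with controlled growth outside. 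The proof reduces to showing $\mathcal{L}\Phi<0$ pointwise in an annular region: the $(s,p)$-piece is controlled as in \cite{Lin16}, while the $a$-weighted $(t,q)$-piece is absorbed using only the $L^\infty$-bound \eqref{a} on $a(\cdot,\cdot)$ --- crucially, since $a\geq 0$ and enters only as a bounded multiplier, it cannot reverse the sign of the leading contribution, and the condition $q/p\leq s/t$ ensures that the $(t,q)$-correction is subordinate in scale to the $(s,p)$-part. Convergence of the integrals defining $\mathcal{L}\Phi$ in both the singular $p<2$ and degenerate $p\geq 2$ regimes is guaranteed by \eqref{pq}. This positivity lemma corresponds to the forthcoming Lemma~\ref{lem_positivity}.

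The main obstacle I expect is precisely the tripartite case analysis forced by the nonlinear growth: $p,q\geq 2$ (doubly degenerate), $1<p\leq q<2$ (doubly singular), and the mixed case $1<p<2\leq q$. Each requires separately verifying the barrier computation and, more delicately, that the scaling factor $\lambda^{q-p}r^{tq-sp}$ on $\widetilde{a}$ remains universally bounded while simultaneously the data factor $\lambda^{1-p}r^{sp}$ is small enough to reabsorb $\|f\|_{L^\infty}$ into the $\varepsilon_0$ of the oscillation lemma --- this is exactly where the sharp balance \eqref{hp_spq} enters, and it is where the nonlocal double phase structure departs substantially from the pure $p$-fractional case of \cite{Sil06,Lin16}, necessitating the refined scaling analysis announced by the authors.
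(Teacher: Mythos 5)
Your outline is essentially the one the paper follows: rescale so that $\|u\|_\infty\le 1$ and $\|f\|_\infty$ is tiny, iterate a decay-of-oscillation lemma on dyadic balls keeping track of the exterior growth, and prove that lemma by a barrier/comparison argument analyzed in the three regimes $q\geq p\geq 2$, $q\geq 2\geq p$, $2>q\geq p$. You also correctly isolate the key scaling observation (\eqref{newa}: the rescaled coefficient picks up $\lambda^{p-q}\mu^{sp-tq}$, the datum picks up $\lambda^{p-1}\mu^{sp}$) and the resulting role of \eqref{hp_spq}.

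Two points in your sketch are, however, materially off the paper's route and would need reworking. First, the diminishing-of-oscillation lemma (Lemma~\ref{lem_positivity}) is not obtained by proving $\mathcal{L}\Phi<0$ for an explicit radial barrier $\Phi$ and then invoking comparison — no comparison principle for $\mathcal{L}$ is established (and it is not obvious for this nonlinear nonlocal class). Instead one argues by contradiction at the maximum point $\bar x$ of $u+\kappa\beta$, where $\beta$ is the $C^2$ bump supported in $B_1$: Proposition~\ref{prop_classic} allows $\mathcal{L}u(\bar x)\le\sigma$ to be read pointwise, and then $\mathcal{L}(u+\kappa\beta)(\bar x)$ is bounded below (discarding the nonnegative $q$-part on the measure-$\varepsilon$ set) and above. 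The nonlinearity $|t|^{r-2}t$ does not split additively, so the upper bound requires the elementary inequalities of Lemmas~\ref{lem_superlinear}--\ref{lem_singular}, which is exactly where the unpleasant constants $2^{q-2}$, $6^{q-1}+2^{2q-3}$, $3^{q-1}+2^{q-1}$ of Proposition~\ref{prop_cumbersome} come from and why the smallness of $\kappa,\eta$ must be chosen to dominate them. Second, the inequality $q/p\le s/t$ does not serve, in the barrier step, to make the $(t,q)$-part ``subordinate in scale'' to the $(s,p)$-part — in Proposition~\ref{prop_cumbersome} both contributions are made small individually by taking $\kappa,\eta\to 0$ and using $q>1/(1-t)$, $p>1/(1-s)$ for integrability; what $q/p\le s/t$ (together with $q/p\le 1+s$) actually controls is the boundedness of the new modulating coefficient $\bar a$ in \eqref{a0}--\eqref{a0cristianaaa} across the iteration and the competition between $\hat c\sim\sigma^{-(q-p)/(p-1)}$ and $\sigma$ in Section~\ref{sec_clarification}. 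Likewise your heuristic ``$a\ge 0$ cannot reverse the sign of the leading contribution'' is only used in the single step of discarding the $q$-term from the lower bound on $I_\rho$; the upper-bound side still has to control the $q$-term quantitatively, it is not free.

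These are fixable imprecisions rather than dead ends, and your overall plan would land you where the paper does, but as written the lemma step and the accounting of where \eqref{hp_spq} is used are not yet correct.
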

In order to shorten the notation, in the statement above as well as in the rest of the paper, we use the symbol~\texttt{data} to express the natural dependency of the constants as follows 
\[
\texttt{data}\,:= \,n,p,q,s,t,M,\Lambda,\nr{u}_{L^{\infty}(\mathds{R}^{n})},\nr{f}_{L^{\infty}(B_{2})}.
\]

\begin{remark}\label{rem_lavri}{\rm As mentioned in the introduction, in the local case, the sharp $C^{1,\beta}$-regularity result by Colombo and Mingione is strictly related to the effect of the Lavrentiev phenomenon, which can be avoided in the ``a priori bounded'' case by assuming $1\le q/p\le 1+\alpha/p$, \, $\alpha$ being the H\"older exponent of $a(\cdot)$. In clear accordance, the range of validity of our result in~\eqref{hp_spq} is precisely given by $1\leq q/p\leq \min\{s/t,\, 1+s\}$, being informally $\alpha=0$ here.
Notice also that such a bound can be improved when considering homogeneous equations; i.~\!e., when $f\equiv 0$, as follows, $1\le q/p \le s/t$. As we will see in the following, this is strictly related to the scaling properties of the nonlocal double phase equations.
}
\end{remark}
\begin{remark}\label{revr}
\emph{Condition~\eqref{pq}$_{2}$ is due to the presence of the modulating coefficient $a(\cdot, \cdot)$ which, in view of \eqref{a}, is only bounded. In fact, concerning the integrability exponent~$p$, we notice that the assumption in~$\eqref{pq}_{1}$ applies only when $p<2$, while, if no extra regularity 
is given on $a(\cdot, \cdot)$, the assumption in~$\eqref{pq}_{2}$ becomes necessary in order to deal with the $q$-part of the involved energy functional. On the other hand, if $a$ belongs to~$C^{0,\alpha}(\mathds{R}^{n})$ for some $\alpha\in (0,1]$, condition~$\eqref{pq}_{2}$ can be improved as follows
\begin{flalign*}
\begin{cases}
 \displaystyle q>\frac{1-\alpha}{1-t} \ \ &\mbox{if}\ \ q\ge 2,\\[0.5ex]
 \displaystyle q>\frac{1}{1-t} \ \ &\mbox{if} \ \ q<2;
\end{cases}
\end{flalign*}
see Lemma~\ref{revL2} in the appendix. This is consistent with the more complex structure of nonlocal double phase equations with respect to that of  pure $p$/$p(x)$-fractional equations. 
}
\end{remark}
\begin{remark}\label{rem_limit}
{\rm It is worth noticing that the estimates obtained in the present paper are not uniform with respect to the differentiability exponents $s$ and $t$ as they approach~1. This is consistent with the result in the local case. Otherwise, Theorem~\ref{teo_holder} would have implied a very general H\"older continuity {result} for solutions to the local double phase problem without requiring the H\"older continuity of the modulating coefficient~$a(\cdot,\cdot)$, in contrast with the sharpness of the results in~\cite{CM15,CM15b}.
For  H\"older estimates and some higher regularity result for related $(s,p)$-Laplace equations when $s$ approach $1$, we refer to~\cite{BCF12}. 
}
\end{remark}

\vspace{2mm}
\section{Viscosity solutions to nonlocal double phase problems}\label{sec_viscosity}

{Let us} provide the natural definition of nonlocal viscosity solutions to~\eqref{problemac}, by also showing their connection with the classical solutions. We have the following
\begin{definition}\label{def_viscosity}
Let $\Omega \subset \mathds{R}^{n}$ be an open subset and ${\mathcal{L}}$ be as {in} \eqref{Lc}, {under assumptions~\eqref{K}-\eqref{a}}. An upper semicontinuous function $u \in L_{\rm loc}^{\infty}(\Omega)$  
is a subsolution of ${\mathcal{L}}(\cdot)= C$ in $\Omega$, and we write 
\[
\text{\textquotedblleft} u \ \text{is such that} \ {\mathcal{L}}(u)\leq C \ \text{in} \ \Omega \ \text{in the viscosity sense\quotedblbase}
\] 
if the following statement holds: whenever $x_{0}\in \Omega$ and $\varphi\in C^{2}(B_{\rr}(x_{0}))$ for some $\rr >0$ are so that
\begin{flalign*}
\varphi(x_{0})=u(x_{0}), \quad \varphi(x)\ge u(x) \ \ \mbox{for all} \ x \in B_{\rr}(x_{0})\Subset \Omega,
\end{flalign*}
then we have ${\mathcal{L}}\varphi_{\rr}{(x_{0})}\le C$, where
\[
\varphi_{\rr}:=\begin{cases}
 \ \varphi  & \mbox{in \ } B_{\rr}(x_{0})\\
 \ u  & \mbox{in \ } \mathds{R}^{n}\setminus B_{\rr}(x_{0}).
\end{cases}
\]
A {\it viscosity supersolution} is defined in an analogous fashion, and a {\it viscosity solution} is a function which is both a subsolution and a supersolution.
\end{definition}

As customary, in the definition above, we denoted by $B_\rho=B_{\rho}(x_0)$ the ball of radius~$\rho$ centered in $x_0$. We will keep this notation throughout the rest of the paper.
\vspace{1mm}
In the proposition below, we  show that as soon as we can touch a viscosity subsolution to~\eqref{problemac} with a $C^{2}$-function, then it behaves as a classical subsolution. The proof will extend to the double phase problems a by-now classical approach, as firstly seen in~\cite{CS09} for fully nonlinear integro-differential operators.
\begin{proposition}\label{prop_classic}
Under assumptions \eqref{K}-\eqref{energiafinita}, suppose that ${\mathcal{L}}u\le C$ in $B_{1}$ in the viscosity sense. If $\varphi\in C^{2}(B_{\rr}(x_{0}))$ is such that
\begin{flalign*}
\varphi(x_{0})=u(x_{0}), \quad \varphi(x)\ge u(x) \ \ \text{in} \ \ B_{\rr}(x_{0})\Subset B_{1},
\end{flalign*}
for some $0<\rr<1$, then ${\mathcal{L}}u$ is defined {in the} pointwise {sense} at $x_{0}$ and ${\mathcal{L}}u(x_{0})\le C$.
\end{proposition}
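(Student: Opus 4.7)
The plan is to adapt the Caffarelli--Silvestre argument~\cite{CS09} to the nonlocal double phase setting. The viscosity hypothesis gives, for every $r\in(0,\rr)$, the inequality $\mathcal{L}\varphi_{r}(x_{0})\le C$, where $\varphi_{r}=\varphi$ on $B_{r}(x_{0})$ and $\varphi_{r}=u$ elsewhere. The whole argument reduces to showing that this quantity converges, as $r\to 0^{+}$, to a limit which can be identified with the pointwise principal value $\mathcal{L}u(x_{0})$; the bound $\le C$ then passes to the limit.

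Splitting each of the two integrals in~\eqref{L} over $B_{r}$ and its complement, I would write $\mathcal{L}\varphi_{r}(x_{0})=I_{r}^{\rm in}+I_{r}^{\rm out}$. On $B_{r}$ only $\varphi\in C^{2}$ appears, so the symmetry $K_{sp}(x_{0},y)=K_{sp}(x_{0},-y)$ from~\eqref{K} allows me to symmetrise the $p$-part of $I_{r}^{\rm in}$ as
\[
\frac{1}{2}\int_{B_{r}}\bigl(|\delta^{+}|^{p-2}\delta^{+}+|\delta^{-}|^{p-2}\delta^{-}\bigr)K_{sp}(x_{0},y)\,\dy,\qquad \delta^{\pm}:=\varphi(x_{0})-\varphi(x_{0}\pm y).
\]
A Taylor expansion of $\varphi$ at $x_{0}$, combined with the standard monotonicity estimates for $\xi\mapsto|\xi|^{p-2}\xi$ (Lipschitz-type when $p\ge 2$, $(p-1)$-H\"older when $1<p<2$), produces a pointwise bound of the form $c\,|y|^{\kappa(p)}$, with $\kappa(p)=p$ if $p\ge 2$ and $\kappa(p)=2(p-1)$ otherwise. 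Against the upper kernel bound in~\eqref{K} and invoking $p>1/(1-s)$ from~\eqref{pq}, this is absolutely integrable near the origin and forces the $p$-part of $I_{r}^{\rm in}$ to vanish as $r\to 0$. The $q$-contribution is handled identically, using $q>1/(1-t)$ and the uniform envelope $a(x_{0},y)\le M$ from~\eqref{a}.

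For the outer part $I_{r}^{\rm out}$, I would exploit that $\varphi_{r}\equiv u$ on $\mathds{R}^{n}\setminus B_{r}$ and $\varphi(x_{0})=u(x_{0})$, so the integrand already coincides with the one defining $\mathcal{L}u(x_{0})$. Decomposing the complement as $(B_{\rr}\setminus B_{r})\cup(\mathds{R}^{n}\setminus B_{\rr})$: on the far region, the bounds in~\eqref{K}, $\|u\|_{L^{\infty}(\mathds{R}^{n})}<\infty$, and the finite energy condition~\eqref{energiafinita} yield an $r$-independent integrable dominating function; on the near annulus, the one-sided inequality $u(x_{0})-u(x_{0}+y)\ge\varphi(x_{0})-\varphi(x_{0}+y)$ (from $u\le\varphi$ with equality at $x_{0}$), together with the quadratic cancellation $2u(x_{0})-u(x_{0}+y)-u(x_{0}-y)\ge -c\,|y|^{2}$ extracted from the $C^{2}$-expansion of~$\varphi$, produces after symmetrisation a dominating function of the same order as the one available for $I_{r}^{\rm in}$, again integrable under~\eqref{pq}. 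Dominated convergence then gives $\lim_{r\to 0^{+}}I_{r}^{\rm out}=\mathcal{L}u(x_{0})$ in the principal-value sense, and the $q$-piece is treated likewise via~\eqref{a} and~\eqref{pq}.

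The main obstacle is the singular regime $1<p<2$ (respectively $1<q<2$), where the inner cancellation is only $(p-1)$-H\"older rather than of Lipschitz type; it is precisely~\eqref{pq} that preserves integrability near~$0$ in that range. A further subtlety is that $a(\cdot,\cdot)$ is only measurable and cannot be evaluated by continuity at~$x_{0}$: throughout the argument, $a(x_{0},y)$ is kept inside the integrals and controlled only through~\eqref{a}. Passing to the limit in $\mathcal{L}\varphi_{r}(x_{0})\le C$ then yields $\mathcal{L}u(x_{0})\le C$, with $\mathcal{L}u(x_{0})$ defined pointwise in the principal-value sense.
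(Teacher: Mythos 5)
Your treatment of the inner region and the overall strategy (pass to the limit in $\mathcal{L}\varphi_{r}(x_{0})\le C$) match the paper, but there is a genuine gap in the outer part. The one-sided information you have — $u\le\varphi$ with equality at $x_{0}$, hence $u(x_{0})-u(x_{0}\pm y)\ge\varphi(x_{0})-\varphi(x_{0}\pm y)$ — only bounds the symmetrised integrand for $u$ \emph{from below} by $-c\,\snr{y}^{\kappa}$ on the annulus; it controls the negative part of $\delta_{p}(u,x_{0},y)$ (and $\delta_{q}$), not its absolute value. The positive part is completely unconstrained by the test function: $u$ is merely bounded and upper semicontinuous, so $u(x_{0})-u(x_{0}+y)$ can be of order one for arbitrarily small $\snr{y}$ (the touching condition restricts $u$ only from above), and then the positive part of the integrand is of order $\snr{y}^{-n-sp}$, which is not integrable near the origin. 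Hence there is no dominating function in the sense required by the dominated convergence theorem, and the claim that $\lim_{r\to0^{+}}I_{r}^{\rm out}$ exists and equals $\mathcal{L}u(x_{0})$ is circular: the existence of that limit is precisely the principal value whose convergence the proposition asserts, and it cannot be deduced from the geometry of the touching alone.

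The missing ingredient is that the equation itself must be used to control the positive part. This is the core of the paper's argument: writing $\delta_{r}^{+}=\delta_{r}+\delta_{r}^{-}$ and using the viscosity inequality $\mathcal{L}\varphi_{\rr'}(x_{0})\le C$ one gets $\int(\delta^{+}_{p}(\varphi_{\rr'})K_{sp}+\delta^{+}_{q}(\varphi_{\rr'})K_{tq})\,{\rm d}y\le \int(\delta^{-}_{p}(\varphi_{\rr'})K_{sp}+\delta^{-}_{q}(\varphi_{\rr'})K_{tq})\,{\rm d}y+C$ as in \eqref{003}; the monotonicity \eqref{002} of $\varsigma\mapsto\delta_{r}(\varphi_{\varsigma},x_{0},y)$ (which also yields \eqref{5}, i.e.\ the negative parts of $\delta_{r}(u,x_{0},\cdot)$ are dominated by the integrable quantity $\snr{\delta_{r}(\varphi_{\rr},x_{0},\cdot)}$ via Lemma~\ref{revL2} and \eqref{pq}) makes the right-hand side bounded uniformly in $\rr'$, and then monotone convergence gives integrability of $\delta^{+}_{p}(u,x_{0},\cdot)K_{sp}+\delta^{+}_{q}(u,x_{0},\cdot)K_{tq}$ and allows passing to the limit in \eqref{3}. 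Only after this two-sided integrability is established does the principal value $\mathcal{L}u(x_{0})$ exist and inherit the bound $\le C$. Your proposal should be repaired by replacing the "dominated convergence on $I_{r}^{\rm out}$" step with this argument (or an equivalent use of the inequality $\mathcal{L}\varphi_{r}(x_{0})\le C$ to bound the positive part uniformly in $r$); minor points, such as invoking \eqref{energiafinita} on the far region where $\nr{u}_{L^{\infty}}$ already suffices, are harmless.
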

\begin{proof}
For $0<\rr'\le \rr$, set
\[
\varphi_{\rr'}:=
\begin{cases}
\ \varphi  & \mbox{in \ }B_{\rr'}(x_{0})\\
\ u & \mbox{in \ }\mathds{R}^{n}\setminus B_{\rr'}(x_{0}).
\end{cases}
\]
Since $u$ is a viscosity subsolution in the sense of Definition~\ref{def_viscosity}, then 
\begin{flalign}\label{0}
{\mathcal{L}}\varphi_{\rr'}(x_{0})\le C.
\end{flalign}
The proof will now follows that of the analogous result for the $p$-Laplace equation in~\cite{Lin16}, which extends to the nonlinear case the original proof by Caffarelli and Silvestre in~\cite[Lemma~3.3]{CS09}. Clearly, we have to take into account the competition between the nonlocal kernels modulated by the coefficient~$a(\cdot,\cdot)$, and this will require some modifications. Firstly, by introducing a similar terminology to the one adopted in~\cite{Lin16}, we  define  the following quantities
\begin{eqnarray*}
\delta_{p}(\varphi_{\rr'},x,y)\!\!&:=&\!\!\frac{1}{2}\snr{\varphi_{\rr'}(x)-\varphi_{\rr'}(x+y)}^{p-2}(\varphi_{\rr'}(x)-\varphi_{\rr'}(x+y))\\*
&&\!\!+\, \frac{1}{2}\snr{\varphi_{\rr'}(x)-\varphi_{\rr'}(x-y)}^{p-2}(\varphi_{\rr'}(x)-\varphi_{\rr'}(x-y)),\\[1ex]
\delta_{q}(\varphi_{\rr'},x,y)\!\!&:=&\!\!\frac{1}{2}\snr{\varphi_{\rr'}(x)-\varphi_{\rr'}(x+y)}^{q-2}(\varphi_{\rr'}(x)-\varphi_{\rr'}(x+y))a(x,y)\\*
&&\!\!+\, \frac{1}{2}\snr{\varphi_{\rr'}(x)-\varphi_{\rr'}(x-y)}^{q-2}(\varphi_{\rr'}(x)-\varphi_{\rr'}(x-y))a(x,-y),
\end{eqnarray*}
and we consider the corresponding positive and negative part defined as follows,
\begin{eqnarray*}
\delta_{p}^{\pm}(\varphi_{\rr'},x,y) :=&\!\!\max\big\{\pm \delta_{p}(\varphi_{\rr'},x,y),\,0\big\},\\[1ex]
\delta_{q}^{\pm}(\varphi_{\rr'},x,y) :=&\!\!\max\big\{\pm \delta_{q}(\varphi_{\rr'},x,y),\,0\big\}.
\end{eqnarray*} 

Since $\varphi \in C^{2}(B_{\rr}(x_{0}))$, by keeping in mind the structural assumptions on the kernels, from~\eqref{0} we can deduce that
\begin{flalign}
 C \, \ge &\ {\mathcal{L}}\varphi_{\rr'}(x_{0})\nonumber \\*
=& \int_{\mathds{R}^{n}\setminus B_{\rr'}(0)}\snr{\varphi_{\rr'}(x_{0})-\varphi_{\rr'}(x_{0}+y)}^{p-2}(\varphi_{\rr'}(x_{0})-\varphi_{\rr'}(x_{0}+y))K_{sp}(x_0,y)dy\nonumber \\*
&+\int_{\mathds{R}^{n}\setminus B_{\rr'}(0)}a(x_{0},y)\snr{\varphi_{\rr'}(x_{0})-\varphi_{\rr'}(x_{0}+y)}^{q-2}(\varphi_{\rr'}(x_{0})-\varphi_{\rr'}(x_{0}+y))K_{tq}(x_0,y) \dy\nonumber \\*
&+ \lim_{\varepsilon\to 0^{+}}\int_{B_{\rr'}(0)\setminus B_{\varepsilon}(0)}\snr{\varphi_{\rr'}(x_{0})-\varphi_{\rr'}(x_{0}+y)}^{p-2}(\varphi_{\rr'}(x_{0})-\varphi_{\rr'}(x_{0}+y))K_{sp}(x_0,y)\dy\nonumber \\*
&+ \lim_{\varepsilon\to 0^{+}}\int_{B_{\rr'}(0)\setminus B_{\varepsilon}(0)}a(x_{0},y)\snr{\varphi_{\rr'}(x_{0})-\varphi_{\rr'}(x_{0}+y)}^{q-2}(\varphi_{\rr'}(x_{0})-\varphi_{\rr'}(x_{0}+y))K_{tq}(x_0,y)\dy\nonumber \\[1ex] 
=&\int_{\mathds{R}^{n}\setminus B_{\rr'}(0)}\big(\delta_{p}(\varphi_{\rr'},x_{0},y)K_{sp}(x_0,y)+\delta_{q}(\varphi_{\rr'},x_{0},y)K_{tq}(x_0,y)\big) \dy\nonumber \\*
&+ \lim_{\varepsilon\to 0^{+}}\int_{B_{\rr'}(0)\setminus B_{\varepsilon}(0)}\frac{1}{2}\snr{\varphi(x_{0})-\varphi(x_{0}+y)}^{p-2}(\varphi(x_{0})-\varphi(x_{0}+y))K_{sp}(x_0,y) \dy\nonumber \\*
&+\lim_{\varepsilon\to 0^{+}}\int_{B_{\rr'}(0)\setminus B_{\varepsilon}(0)}\frac{1}{2}\snr{\varphi(x_{0})-\varphi(x_{0}-y)}^{p-2}(\varphi(x_{0})-\varphi(x_{0}-y))K_{sp}(x_0,-y) \dy\nonumber \\*
&+ \lim_{\varepsilon\to 0^{+}}\int_{B_{\rr'}(0)\setminus B_{\varepsilon}(0)}\frac{a(x_{0},y)}{2}\snr{\varphi(x_{0})-\varphi(x_{0}+y)}^{q-2}(\varphi(x_{0})-\varphi(x_{0}+y))K_{tq}(x_0,y) \dy\nonumber \\*
&+ \lim_{\varepsilon\to 0^{+}}\int_{B_{\rr'}(0)\setminus B_{\varepsilon}(0)}\frac{a(x_{0},-y)}{2}\snr{\varphi(x_{0})-\varphi(x_{0}-y)}^{q-2}(\varphi(x_{0})-\varphi(x_{0}-y))K_{tq}(x_0,-y) \dy\nonumber \\[1ex]
=&\int_{\mathds{R}^{n}\setminus B_{\rr'}(0)}\big(\delta_{p}(\varphi_{\rr'},x_{0},y)K_{sp}(x_0,y)+\delta_{q}(\varphi_{\rr'},x_{0},y)K_{tq}(x_0,y)\big) \dy\nonumber \\*
&+ \int_{B_{\rr'}(0)}\big(\delta_{p}(\varphi,x_{0},y)K_{sp}(x_{0},y)+\delta_{q}(\varphi,x_{0},y)K_{tq}(x_0,y) \big) \dy \nonumber \\[1ex]
=&\int_{\mathds{R}^{n}}\big(\delta_{p}(\varphi_{\rr'},x_{0},y)K_{sp}(x_{0},y)+\delta_{q}(\varphi_{\rr'},x_{0},y)K_{tq}(x_{0},y)\big)\dy, \label{001}
\end{flalign}
where,  we first operated the change of variable $y\mapsto -y$, used \eqref{K} and then, by \eqref{pq} and the fact that $\varphi_{\rr'}\equiv \varphi$ in $B_{\rr'}$, with $\varphi \in C^{2}(B_{\rr'}(x_{0}))$, we noticed that the singular integrals appearing in the previous display actually converge.
\vspace{0.1mm}

 Moreover, given that for $0<\varsigma_{1}<\varsigma_{2}< \rr$ one has $\varphi_{\varsigma_{2}}\,\ge\, \varphi_{\varsigma_{1}}\,{\ge u}$, we can recall {the very definition of the $\delta$'s, \eqref{K} and \eqref{a}, to get} 
\begin{flalign}\label{002}
\delta_{r}(\varphi_{\varsigma_{2}},x_{0},y)\ \le\  \delta_{r}(\varphi_{\varsigma_{1}},x_{0},y)\ \le\ \delta_{r}(u,x_{0},y),
\end{flalign}
{with $r=\{p,q\}$ and $\varsigma_{1}<\varsigma_{2}< \rr$. Therefore we deduce that}
\begin{eqnarray}\label{5}
\delta_{p}^{-}(u,x_{0},y)+\delta_{q}^{-}(u,x_{0},y) \!\!& \le &\!\!\delta_{p}^{-}(\varphi_{\varsigma},x_{0},y)+\delta_{q}^{-}(\varphi_{\varsigma},x_{0},y)\nonumber\\*[1ex]
& \le&\!\! \snr{\delta_{p}(\varphi_{\rr},x_{0},y)}+\snr{\delta_{q}(\varphi_{\rr},x_{0},y)},
\end{eqnarray}
for all $0<\varsigma<\rr$. Using \eqref{pq}, the content of Lemma \ref{revL2} in the appendix and the fact that $\varphi\in C^{2}(B_{\rr}(x_{0}))$, we have that
\begin{flalign*}\snr{\delta_{p}(\varphi_{\rr},x_{0},\cdot)}K_{sp}(x_0,\cdot)+\snr{\delta_{q}(\varphi_{\rr},x_{0},\cdot)}K_{tq}(x_0,\cdot)\in L^{1}(\mathds{R}^{n}),
\end{flalign*}
and thus $\delta_{p}^{-}(u,x_{0},\cdot)K_{sp}(x_0,\cdot)+\delta_{q}^{-}(u,x_{0},\cdot)K_{tq}(x_0,\cdot)$ is integrable. 
Furthermore, we have
\begin{eqnarray}\label{003}
&& \int_{\mathds{R}^{n}}\left(\delta^{+}_{p}(\varphi_{\rr'},x_{0},y)K_{sp}(x_0,y)+\delta^{+}_{q}(\varphi_{\rr'},x_{0},y)K_{tq}(x_0,y)\right) \dy\nonumber \\*
&& \qquad\qquad \quad \ =\ \int_{\mathds{R}^{n}}\big(\delta_{p}^{-}(\varphi_{\rr'},x_{0},y)K_{sp}(x_0,y)+\delta_{p}^{-}(\varphi_{\rr'},x_{0},y)K_{sp}(x_0,y)\nonumber  \\*
&&\qquad \qquad \qquad \qquad \ \ \,+\delta_{q}(\varphi_{\rr'},x_{0},y)K_{tq}(x_0,y) + \delta_{q}^{-}(\varphi_{\rr'},x_{0},y)K_{tq}(x_0,y)\big) \dy\nonumber \\*[1ex]
&&\qquad \qquad \quad  \stackleq{001} \ \int_{\mathds{R}^{n}}\big(\delta_{p}^{-}(\varphi_{\rr'},x_{0},y)K_{sp}(x_0,y)+\delta_{q}^{-}(\varphi_{\rr'},x_{0},y)K_{tq}(x_0,y)\big)\dy+C.
\end{eqnarray}
\vspace{1mm}

Hence, for $\varsigma_{1}<\varsigma_{2}$,
\begin{eqnarray}\label{2}
&& \hspace{-1.2cm}\int_{\mathds{R}^{n}}\big(\delta^{+}_{p}(\varphi_{\varsigma_{1}},x_{0},y)K_{sp}(x_{0},y)+\delta^{+}_{q}(\varphi_{\varsigma_{1}},x_{0},y)K_{tq}(x_{0},y)\big) \dy \nonumber \\
&& \stackleq{003}\  \int_{\mathds{R}^{n}}\big(\delta^{-}_{p}(\varphi_{\varsigma_{1}},x_{0},y)K_{sp}(x_{0},y)+\delta^{-}_{q}(\varphi_{\varsigma_{1}},x_{0},y)K_{tq}(x_{0},y)\big)\dy+C\nonumber \\[1ex]
&& \stackleq{002} \ \int_{\mathds{R}^{n}}\big(\delta^{-}_{p}(\varphi_{\varsigma_{2}},x_{0},y)K_{sp}(x_{0},y)+\delta^{-}_{q}(\varphi_{\varsigma_{2}},x_{0},y)K_{tq}(x_{0},y)\big) \dy+C \ < \ \infty.
\end{eqnarray}
Since 
\begin{flalign}\label{005}
\delta^{+}_{p}(\varphi_{\rr'},x_{0},y)+\delta^{+}_{q}(\varphi_{\rr'},x_{0},y) 
\to \delta^{+}_{p}(u,x_{0},y)+\delta^{+}_{q}(u,x_{0},y) \ \text{as } {\rr'\to 0^{+}},
\end{flalign}
and {$\delta^{+}_{r}(\varphi_{\rr'},x_{0},y)=\delta_{r}(\varphi_{\rr'},x_{0},y)+\delta^{-}_{r}(\varphi_{\rr'},x_{0},y)$, from \eqref{002}, \eqref{5}, \eqref{005}, the monotone convergence theorem and the dominated one, we have}
\begin{eqnarray*}
&\dys \int_{\mathds{R}^{n}}\big(\delta^{+}_{p}(\varphi_{\rr'},x_{0},y)K_{sp}(x_{0},y)+ \delta^{+}_{q}(\varphi_{\rr'},x_{0},y)K_{tq}(x_{0},y)\big) \dy\\*
&\downarrow \\* 
&\dys \int_{\mathds{R}^{n}}\big(\delta^{+}_{p}(u,x_{0},y)K_{sp}(x_{0},y) +\ \delta^{+}_{q}(u,x_{0},y)K_{tq}(x_{0},y) \big)\dy,
\end{eqnarray*}
as $\rr'$ goes to $0^{+}$. Now, using in \eqref{2} the fact that $\varsigma\mapsto\delta^{-}_{r}(\varphi_{\varsigma},x_{0},y)$ is non decreasing, we obtain
\begin{eqnarray}\label{3}
&&\int_{\mathds{R}^{n}}\big(\delta^{+}_{p}(u,x_{0},y)K_{sp}(x_{0},y)+\delta^{+}_{q}(u,x_{0},y)K_{tq}(x_{0},y)\big) \dy\nonumber\\*
&&\qquad \quad \le \ \int_{\mathds{R}^{n}}\big(\delta^{-}_{p}(\varphi_{\rr'},x_{0},y)K_{sp}(x_{0},y)+\delta^{-}_{q}(\varphi_{\rr'},x_{0},y)K_{tq}(x_{0},y)\big) \dy+C \ < \ \infty,
\end{eqnarray}
for all $0<\rr'<\rr$. This allows us to conclude that $\delta^{+}_{p}(u,x_{0},\cdot)K_{sp}(\cdot)+\delta^{+}_{q}(u,x_{0},\cdot)K_{tq}(\cdot)$ is integrable. So that, by \eqref{5} and the {monotone} convergence theorem, we can pass to the limit as $\rr'\to 0^{+}$ in the right-hand side of \eqref{3}. We  get
\begin{eqnarray*}
&&\int_{\mathds{R}^{n}}\big(\delta^{+}_{p}(u,x_{0},y)K_{sp}(x_{0},y)+\delta^{+}_{q}(u,x_{0},y)K_{tq}(x_{0},y)\big) \dy\\*
&& \qquad \qquad \qquad \le \ \int_{\mathds{R}^{n}}\big(\delta^{-}_{p}(u,x_{0},y)K_{sp}(x_{0},y)+\delta^{-}_{q}(u,x_{0},y)K_{tq}(x_{0},y)\big) \dy+C,
\end{eqnarray*}
which means
\begin{flalign*} 
\int_{\mathds{R}^{n}}\delta_{p}(u,x_{0},y)K_{sp}(x_{0},y)+\delta_{q}(u,x_{0},y)K_{tq}(x_{0},y) \dy \ \le \ C.
\end{flalign*}
Finally, by applying $-y\mapsto y$ in the display above, we end up with
\begin{flalign*}
\int_{\mathds{R}^{n}}&\snr{u(x_{0})-u(x_{0}+y)}^{p-2}(u(x_{0})-u(x_{0}+y))K_{sp}(x_{0},y) \dy\\
&+\int_{\mathds{R}^{n}}a(x_{0},y)\snr{u(x_{0})-u(x_{0}+y)}^{q-2}(u(x_{0})-u(x_{0}+y))K_{tq}(x_{0},y) \dy
\ \le\ C;
\end{flalign*}
i.~\!e., $\mathcal{L}u(x_{0})$ exists in the pointwise sense and ${\mathcal{L}}u(x_{0})\le C$, as desired.
\end{proof}

\vspace{2mm}
\section{Fundamental regularity estimates}\label{sec_estimates}
In this section we  set the background for the proof of Theorem~\ref{teo_holder}, {whose core is a refinement  result for inhomogeneous nonlocal double phase equations given in forthcoming Lemma~\ref{lem_positivity}  which goes back to the pioneering work by~De~Giorgi. We refer the reader to   
 the important paper~\cite{Sil06} for first results in this direction for  fractional operators, which -- as mentioned in the introduction -- will be the starting point of our main proof.}
 
 \vspace{2mm}
Let us denote by $\beta$ any radial map which is~$C^{2}$-regular, vanishes outside~$B_{1}$, and it is non-increasing along rays from the origin. 
The precise expression of such function~$\beta$ is not relevant; for the sake of simplicity, one can just keep in mind the following choice (with a slight abuse of notation),
\begin{flalign*}
\beta(x)=\beta(\snr{x}):=\left((1-\snr{x}^{2})^{+}\right)^{2}.
\end{flalign*}

\vspace{1mm}

We now need to introduce a slightly modified version of the original operator in~\eqref{L}. Precisely, for a positive, absolute constant $\hat{c}$ we shall consider
\begin{eqnarray}\label{Lc}
\hat{\mathcal{L}}u(x)\!\!&:=&\!\!{P.~\!V.}\!\dys\int_{\mathds{R}^{n}}\snr{u(x)-u(x+y)}^{p-2}(u(x)-u(x+y))K_{sp}(x,y)  \dy\nonumber \\*
&&\!\!+\ {P.~\!V.}\!\dys\int_{\mathds{R}^{n}}\hat{c}a(x,y)\snr{u(x)-u(x+y)}^{q-2}(u(x)-u(x+y))K_{tq}(x,y) \dy,
\end{eqnarray}
with the corresponding problem
\begin{eqnarray}\label{problemac}
\hat{\mathcal{L}}u(x)\!\!&:=&\!\!\hat{f} \ \ \mbox{in} \ \ B_{2},
\end{eqnarray}
for $\hat{f}\in L^{\infty}(B_{2})$. Clearly, being the dilation constant~$\hat{c}$ strictly positive, all the assumptions \eqref{K}--\eqref{energiafinita} are satisfied, with the solely difference that the upper bound $M$ appearing in \eqref{a} must be replaced by $\hat{M}:=\hat{c}M$. Thus, in particular,   the results in the previous section can be plainly applied to this family of nonlocal double phase operators.

\vspace{1mm}
	
We are ready to state and prove some precise inequalities. {The statement in the proposition below could appear somewhat cumbersome, and this is so, as a natural consequence of the difficult non-uniformly growth of the involved operators as well as of their non-standard scaling and dilation properties (see also the appendix on Page~\pageref{appendix}).
However, in order to achieve the proof of the fundamental result in~Lemma~\ref{lem_positivity}, as well as for further developments in the nonlocal theory for the double phase equations, we  need to keep these  estimates as precise as follows.}
\begin{proposition}\label{prop_cumbersome}
Assume $p$, $q$, {$s$, $t$}, $a$, $K_{sp}$, $K_{tq}$ are as described in \eqref{K}--\eqref{a} {and $\hat{c}>0$ is an absolute constant}. Then, for any $\varepsilon>0$ there are $1/2\ge \kappa>0$ and $\eta>0$ such that, if $q\ge p\ge 2$ and $q>{1}/{(1-t)}$, there holds
\begin{flalign}\label{P10}
2^{q-2}&\kappa^{p-1}\int_{x+y\in B_{1}}{\snr{\beta(x)-\beta(x+y)}^{p-2}}(\beta(x)-\beta(x+y))K_{sp}(x,y) \dy\nonumber \\*
&+2^{q-2}\kappa^{q-1} \int_{x+y\in B_{1}}{\hat{c}a(x,y)\snr{\beta(x)-\beta(x+y)}^{q-1}}K_{tq}(x,y) \dy\nonumber\\*
&+2^{q-2}\int_{x+y\not \in B_{1}}\snr{\kappa \beta(x)-\kappa \beta(x+y)+2(\snr{2(x+y)}^{\eta}-1)}^{p-1}K_{sp}(x,y) \dy\nonumber \\*
&+2^{q-2}\int_{x+y\not \in B_{1}}{\hat{c}}a(x,y)\big|{\kappa \beta(x)-\kappa \beta(x+y)+2\big(\snr{2(x+y)}^{\eta}-1\big)}\big|^{q-1}K_{tq}(x,y) \dy\nonumber \\*
&+(2+\hat{c}M)2^{q-1}\int_{y \not \in B_{1/4}}\big((\snr{8y}^{\eta}-1)^{p-1}K_{sp}(x,y)+(\snr{8y}^{\eta}-1)^{q-1}K_{tq}(x,y)\big) \dy
\ \le\ \frac{\varepsilon}{\Lambda 2^{n+{s} p+q}},
\end{flalign}
for all $x \in B_{3/4}$. If $q\ge 2$, $q>{1}/{(1-t)}$ and $q\ge 2\ge p>{1}/({1-{s}})$, we have
\begin{flalign}\label{P11}
(6^{q-1}&+2^{2q-3})\kappa^{p-1} \int_{x+y\in B_{1}}\snr{\beta(x)-\beta(x+y)}^{p-1}K_{sp}(x,y) \dy\nonumber\\*
&\!\!+2^{q-2}\kappa^{q-1} \int_{x+y\in B_{1}}{\hat{c}a(x,y)\snr{\beta(x)-\beta(x+y)}^{q-1}}K_{tq}(x,y) \dy\nonumber \\*
&\!\!+(6^{q-1}+2^{2q-3})\int_{x+y\not \in B_{1}}\snr{\kappa\beta(x)-\kappa\beta(x+y)+2(\snr{2(x+y)}^{\eta}-1)}^{p-1}K_{sp}(x,y) \dy\nonumber \\*
&\!\!+2^{q-2}\int_{x+y\not \in B_{1}}{\hat{c}}a(x,y)\big|{\kappa \beta(x)-\kappa\beta(x+y)+2\big(\snr{2(x+y)}^{\eta}-1\big)\big|}^{q-1}K_{tq}(x,y) \dy\nonumber \\*
&\!\!+{2^{q-1}(2^{q-2}+\hat{c}M)}\int_{y \not \in B_{1/4}}\big((\snr{8y}^{\eta}-1)^{p-1}K_{sp}(y)+a(x,y)(\snr{8y}^{\eta}-1)^{q-1}K_{tq}(x,y)\big) \dy 
\nonumber \\[-2ex] &\ \le\  \frac{\varepsilon}{\Lambda 2^{n+{s} p+q}},
\end{flalign}
for any $x\in B_{3/4}$. Finally, if\, $2> q>1/(1-t)$ and \,$2> p>{1}/({1-{s}})$, 
\begin{flalign}\label{P12}
(3^{q-1}&+2^{q-1})\kappa^{p-1} \int_{x+y\in B_{1}}\snr{\beta(x)-\beta(x+y)}^{p-1}K_{sp}(x,y) \dy \nonumber \\*
&+(3^{q-1}+2^{q-1})\kappa^{q-1} \int_{x+y\in B_{1}}{\hat{c}}a(x,y)\snr{\beta(x)-\beta(x+y)}^{q-1}K_{tq}(x,y) \dy \nonumber \\*
&+(3^{q-1}+2^{q-1})\int_{x+y\not \in B_{1}}\big|{\kappa\beta(x)-\kappa\beta(x+y)+2\big(\snr{2(x+y)}^{\eta}-1\big)\big|}^{p-1}K_{sp}(x,y) \dy \nonumber \\*
&+(3^{q-1}+2^{q-1})\int_{x+y\not \in B_{1}}{\hat{c}}a(x,y)\big|{\kappa\beta(x)-\kappa\beta(x+y)+2\big(\snr{2(x+y)}^{\eta}-1\big)\big|}^{q-1}K_{tq}(x,y) \dy\nonumber\\
&+2^{q-1}(1+\hat{c}M)\int_{y \not \in B_{1/4}}\big((\snr{8y}^{\eta}-1)^{p-1}K_{sp}(y)+\hat{c}a(x,y)(\snr{8y}^{\eta}-1)^{q-1}K_{tq}(x,y)\big) \dy \nonumber \\[-2ex]
&\ \le\ \frac{\varepsilon}{\Lambda 2^{n+{s} p+q}},
\end{flalign}
again for all $x\in B_{3/4}$. Here $\kappa$ and $\eta$ depend on $\Lambda$, $p$, $q$, ${s}$, ${t}$, $\varepsilon$, $M$ and {$\hat{c}$}.
\end{proposition}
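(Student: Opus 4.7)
The observation is that every term appearing on the left-hand side of \eqref{P10}--\eqref{P12} is either a \emph{close} integral over $\{x+y \in B_1\}$ carrying a factor $\kappa^{p-1}$ or $\kappa^{q-1}$, or a \emph{far} integral over $\{x+y \notin B_1\}$ or $\{y \notin B_{1/4}\}$ whose integrand vanishes pointwise as $\eta \to 0^+$ (indeed, $\beta(x+y) \equiv 0$ there). The plan is therefore: control each term in absolute value, fix $\eta$ first so that the far contributions become small, and then $\kappa \in (0,1/2]$ so that the close contributions become small. The three cases will be handled by the same argument, the only difference being the prefactors $2^{q-2}$, $6^{q-1}+2^{2q-3}$, $3^{q-1}+2^{q-1}$, which are absolute constants absorbed into the final choice of $(\kappa,\eta)$.

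For the close integrals, since $\beta\in C^{2}_{c}(\overline{B_{1}})$ is a fixed radial bump,
\[
\snr{\beta(x)-\beta(x+y)}\ \le\ \min\!\bigl\{2\nr{\beta}_{L^{\infty}},\, \nr{\nabla\beta}_{L^{\infty}}\snr{y}\bigr\}.
\]
Combined with \eqref{K} and the split $\{\snr{y}\le 1\}\cup\{\snr{y}>1\}$, this yields
\[
\int_{x+y\in B_{1}}\snr{\beta(x)-\beta(x+y)}^{p-1}K_{sp}(x,y)\dy \ \le\ C(n,p,s,\Lambda),
\]
uniformly in $x\in B_{3/4}$, and analogously for the $q$-integrand (with a constant depending also on $q,t,M,\hat c$). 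Each close term is therefore controlled by a constant times $\kappa^{p-1}$ or $\kappa^{q-1}$.

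For the far integrals, the fact that $\beta\equiv 0$ on $\mathds{R}^{n}\setminus\overline{B_{1}}$ reduces the intermediate integrand to $\snr{\kappa\beta(x)+2(\snr{2(x+y)}^{\eta}-1)}^{p-1}K_{sp}(x,y)$, and analogously for its $q$-counterpart and the last pieces. Split the domain at $\snr{y}=R$. On the bounded annulus $\{1/4<\snr{y}\le R\}$ (note that $x\in B_{3/4}$ and $x+y\notin B_{1}$ force $\snr{y}>1/4$), dominated convergence applies, since $\snr{2(x+y)}^{\eta}-1\to 0$ pointwise as $\eta\to 0^{+}$ against an integrable envelope provided by \eqref{K}. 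On the tail $\{\snr{y}>R\}$, the bound $(\snr{2(x+y)}^{\eta}-1)^{p-1}\le C\snr{y}^{\eta(p-1)}$ combined with the smallness requirement
\[
\eta \ <\ \min\!\Bigl\{\frac{sp}{p-1},\,\frac{tq}{q-1}\Bigr\}
\]
produces an integrable envelope of order $\snr{y}^{\eta(p-1)-n-sp}+\snr{y}^{\eta(q-1)-n-tq}$, whose integral over $\{\snr{y}>R\}$ is $o(1)$ as $R\to\infty$. The last pieces $\int_{\snr{y}>1/4}\!\bigl((\snr{8y}^{\eta}-1)^{p-1}K_{sp}+(\snr{8y}^{\eta}-1)^{q-1}K_{tq}\bigr)\dy$ are handled identically.

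Given $\varepsilon>0$, first choose $\eta=\eta(n,p,q,s,t,\Lambda,M,\hat c,\varepsilon)>0$ small enough so that the sum of every far contribution in \eqref{P10}--\eqref{P12} (with its numerical prefactor) is at most $\varepsilon/(2\Lambda 2^{n+sp+q})$; then choose $\kappa=\kappa(n,p,q,s,t,\Lambda,M,\hat c,\varepsilon)\in (0,1/2]$ small enough so that the close contributions sum to the same bound. Uniformity in $x\in B_{3/4}$ is automatic, since \eqref{K} is uniform in $x$ and $\beta$ is fixed. The main delicacy is the bookkeeping of the numerical constants and the verification that the admissible range for $\eta$ above is nonempty; but this is guaranteed by $s,t\in(0,1)$ and $p,q>1$, in agreement with \eqref{pq}. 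Conceptually the bound is transparent once the close/far splitting is in place.
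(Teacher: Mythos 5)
Your close/far splitting, the dominated-convergence treatment of the far terms under $\eta<\min\{sp/(p-1),tq/(q-1)\}$, and the final order of choices (first $\eta$, then $\kappa$) all match the paper's argument for the terms $II_p$, $II_q$, $III$ and for the close terms in \eqref{P11}--\eqref{P12}. But there is a genuine gap in Case 1, i.e.\ in the first line of \eqref{P10}. You replace the signed integrand $\snr{\beta(x)-\beta(x+y)}^{p-2}(\beta(x)-\beta(x+y))$ by its absolute value and claim
\[
\int_{x+y\in B_{1}}\snr{\beta(x)-\beta(x+y)}^{p-1}K_{sp}(x,y)\dy\le C(n,p,s,\Lambda).
\]
With the Lipschitz bound this integrand is of order $\snr{y}^{\,p-1-n-sp}$ near $y=0$, which is integrable only if $p-1-sp>0$, i.e.\ $p>1/(1-s)$. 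That condition is \emph{not} assumed in Case 1: by \eqref{pq} it is imposed only when $p<2$, and for $p\ge 2$ with $s$ close to $1$ (say $p=2$, $s=0.95$) your integral is actually infinite, so the close $p$-term cannot be absorbed by choosing $\kappa$ small along your route. This is exactly where the paper does something different: it uses the symmetry $K_{sp}(x,y)=K_{sp}(x,-y)$ to average the integrand over $y$ and $-y$, and then applies Lemma \ref{revL1} with $A=-(\beta(x)-\beta(x-y))$, $B=(\beta(x)-\beta(x-y))+(\beta(x)-\beta(x+y))$, so that the $C^{2}$ bound on $\beta$ gives $\snr{B}\le c\snr{y}^{2}$ and the symmetrized integrand is of order $\snr{y}^{\,p-n-sp}$, integrable for every $s\in(0,1)$ and $p\ge2$. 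In other words, the sign structure and kernel symmetry are essential for the first term of \eqref{P10}; dropping them to a plain absolute-value estimate breaks the proof in the admissible regime $2\le p\le 1/(1-s)$.

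A secondary, repairable inaccuracy: in the far region the integrand does not vanish pointwise as $\eta\to0^{+}$, since the $\kappa\beta(x)$ contribution survives ($\beta(x+y)=0$ there, but $\beta(x)$ need not be). This is harmless because, after a triangle-inequality split as in the paper's estimates \eqref{10}--\eqref{11}, that piece contributes $c\,\kappa^{p-1}$ (resp.\ $c\,\kappa^{q-1}$) integrated against $\snr{y}^{-n-sp}$ (resp.\ $\snr{y}^{-n-tq}$) over $\{\snr{y}>1/4\}$, and is then killed by the later choice of $\kappa$; but as written your ``fix $\eta$ first so the far contributions are small'' step is not literally correct without this split. The remaining bookkeeping (the $q$-close terms via $q>1/(1-t)$, the constants, uniformity in $x\in B_{3/4}$) is fine and coincides with the paper.
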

\begin{proof}
For the ease of notation, let us define
\begin{eqnarray*}
I_{p}\!\!&:=&\!\!{2^{q-2}\kappa^{p-1} \int_{x+y\in B_{1}}\snr{\beta(x)-\beta(x+y)}^{p-2}}(\beta(x)-\beta(x+y))K_{sp}(x,y) \dy,\\*
I_{q}\!\!&:=&\!\!{2^{q-2}\kappa^{q-1} \int_{x+y\in B_{1}}\hat{c}a(x,y)\snr{\beta(x)-\beta(x+y)}^{q-1}}K_{tq}(x,y) \dy,\\*
II_{p}\!\!&:=&\!\!2^{q-2}\int_{x+y\not \in B_{1}}\snr{\kappa \beta(x)-\kappa \beta(x+y)+2(\snr{2(x+y)}^{\eta}-1)}^{p-1}K_{sp}(x,y) \dy,\\*
II_{q}\!\!&:=&\!\!2^{q-2}\int_{x+y\not \in B_{1}}{\hat{c}}a(x,y)\big|{\kappa \beta(x)-\kappa \beta(x+y)+2\big(\snr{2(x+y)}^{\eta}-1\big)\big|}^{q-1}K_{tq}(x,y) \dy,\\*
III\!\!&:=&\!\!(2+\hat{c}M)2^{q-1}\int_{y \not \in B_{1/4}}\big((\snr{8y}^{\eta}-1)^{p-1}K_{sp}(x,y)+(\snr{8y}^{\eta}-1)^{q-1}K_{tq}(x,y)\big) \dy.
\end{eqnarray*}

\noindent
\\ \emph{Case 1: $q\ge p\ge 2$ and $q>\frac{1}{1-t}$.} The symmetry of the domain of integration and \eqref{K} allow rewriting term $I_{p}$ as
\begin{flalign}\label{rev20}
I_{p}=&2^{q-3}\kappa^{p-1}\int_{x+y\in B_{1}}\snr{\beta(x)-\beta(x+y)}^{p-2}(\beta(x)-\beta(x+y))K_{sp}(x,y) \ \dy\nonumber \\
&+2^{q-3}\kappa^{p-1}\int_{x+y\in B_{1}}\snr{\beta(x)-\beta(x-y)}^{p-2}(\beta(x)-\beta(x-y))K_{sp}(x,y) \ \dy.
\end{flalign}
Since $\beta\in C^{2}(B_{1})$, we easily see that
\begin{flalign}\label{rev21}
\nr{D\beta}_{L^\infty}\le 8 \ \ \mbox{and} \ \ \nr{D^{2}\beta}_{L^\infty}\le 16,
\end{flalign}
and, as in the proof of Lemma \ref{revL2}, we set
\begin{flalign*}
&A:=-(\beta(x)-\beta(x-y)),\\
&B:=(\beta(x)-\beta(x-y))+(\beta(x)-\beta(x+y)).
\end{flalign*}
In these terms we can manipulate \eqref{rev20} and, from Lemma \ref{revL1}, \eqref{K}, $\eqref{pq}_{1}$ and $\eqref{rev21}_{2}$ we get
\begin{flalign}\label{6}
I_{p}\le & 2^{q-3}\kappa^{p-1}\int_{x+y\in B_{1}}\left(\snr{A+B}^{p-2}(A+B)-\snr{A}^{p-2}A\right)K_{sp}(x,y) \ \dy\nonumber \\
\le &(p-1)2^{q-3}\kappa^{p-1}\int_{x+y\in B_{1}}\snr{B}(\snr{A}+\snr{B})^{p-2} \ \dy\nonumber \\
\le &c\kappa^{p-1}\int_{x+y\in B_{1}}\snr{y}^{p-n-sp} \ \dy \le c(n,p,q,s,\Lambda)\kappa^{p-1}\to_{\kappa\to 0^{+}}0.
\end{flalign}
Further, using this time \eqref{K}, $\eqref{pq}_{2}$,  \eqref{a} and $\eqref{rev21}_{1}$ we estimate
\begin{flalign}\label{7}
\snr{I_{q}}\le c\kappa^{q-1}\int_{x+y\in B_{1}}\snr{y}^{q-1-n-tq} \ \dy\le c(n,q,t,\Lambda,M,\hat{c})\kappa^{q-1}\to_{\kappa\to 0^{+}}0.
\end{flalign}
From estimates \eqref{6} and \eqref{7}, we immediately obtain that $I_{p}+I_{q}\to_{\kappa\to 0^{+}}0$.\\\\
\vspace{1mm}
Concerning the second couple of terms, we notice that, since conditions $x \in B_{3/4}$ and $x+y \not \in B_{1}$ imply $y\not \in B_{1/4}$ and therefore $\snr{x}\le 3\snr{y}$,  in view of~\eqref{K} and the dominated convergence theorem, we can estimate as follows,
\begin{eqnarray}\label{10}
II_{p}\!\! & \le &\!\! c\kappa^{p-1}\int_{y\not \in B_{1/4}}\snr{y}^{-n-{s} p} \dy+c\int_{y \not \in B_{1/4}}(\snr{8y}^{\eta}-1)^{p-1}\snr{y}^{-n-{s} p} \dy\nonumber \\*
&\le &\!\!c\kappa^{p-1}+c\int_{y \not \in B_{1/4}}(\snr{8y}^{\eta}-1)^{p-1}\snr{y}^{-n-{s} p} \dy \ \longrightarrow \ 0
\end{eqnarray}
as $\kappa\to 0^{+}$ and $\eta \to 0^{+}$ (notice also that the latter assures in particular $\eta <{sp/(p-1)}$, so that $\snr{y}^{\eta{(p-1)}-n-{s} p}$ is integrable over $\mathds{R}^{n}\setminus B_{1/4}$). Here $c=c(n,p,q,{s},\Lambda)$. Analogously, we can estimate as above the following term,\begin{eqnarray}\label{11}
II_{q}\!\! & \le &\!\! c\kappa^{q-1}\int_{y \not \in B_{1/4}}\snr{y}^{n-{t} q} \dy+c\int_{y \not \in B_{1/4}}(\snr{8y}^{\eta}-1)^{q-1}\snr{y}^{-n-{t} q} \dy\nonumber \\*
&\le &\!\!c\kappa^{q-1}+c\int_{y \not \in B_{1/4}}(\snr{8y}^{\eta}-1)^{q-1}\snr{y}^{-n-{t} q} \dy\ \longrightarrow\  0,
\end{eqnarray}
as $\kappa\to 0^{+}$ and $\eta \to 0^{+}$ (specifically, $\eta<{tq/(q-1)}$), for a non relabeled constant~$c=c(n,p,q,{t},M,\Lambda,{\hat{c}})$. Estimates \eqref{10}-\eqref{11} guarantee that $II_{p}+II_{q}\to 0$ as $\kappa,\eta \to 0^{+}$. Finally, remembering that $\eta$ is supposed to go to zero, we can assume $\eta<\min\left\{\frac{sp}{p-1},\frac{tq}{q-1}\right\}$, {thus it is easy to see that, by the dominated convergence theorem,}
\begin{equation}\label{modulo_lambda}
III\to_{\eta\to 0^{+}}0.
\end{equation}
Let us have a look to the term on the right-hand side of \eqref{P10}. By \eqref{K} and \eqref{a}, for any $A_{0}\subset B_{2}$ we have
\[
\int_{A_{0}}\big(K_{sp}(x,y)+\hat{c}a(x,y)K_{tq}(x,y) \big)\dy \ \ge \ \int_{A_{0}}\Lambda^{-1}\snr{y}^{-n-{s} p} \dy
\ \ge \ \frac{\snr{A_{0}}}{\Lambda 2^{n+{s} p}},
\]
which yields
\begin{equation}\label{13}
2^{1-q}\!\inf_{A_{0}\subset B_{2},\snr{A_{0}}>\varepsilon}\int_{A_{0}}
\big(K_{sp}(x,y)+\hat{c}a(x,y)K_{tq}(x,y)\big) \dy \ \ge \ \frac{\varepsilon}{\Lambda 2^{n+{s} p+q-1}},
\end{equation}
{so we can choose $\eta_{1}$, $\kappa_{1}$ sufficiently small in such a way that the sum $I_{p}+I_{q}+II_{p}+II_{q}+III$ can be controlled by the quantity in the right-hand side of \eqref{13}, e.~\!g., the number in~\eqref{P10}}. 

\noindent
\\ \emph{Case 2: $q\ge2\ge  p>{1}/{(1-{s})}$ and $q>\frac{1}{1-t}$.}~\,~We only need to take care of the term in the first line of \eqref{P11}, since the other terms can be estimated as in the previous case: the fact that those are multiplied by a different constant will not affect the result. Let
\begin{eqnarray*}
IV\!\!&:=&\!\!(6^{q-1}+2^{2q-3})\kappa^{p-1} \int_{x,y\in B_{1}}\snr{\beta(x)-\beta(x+y)}^{p-1}K_{sp}(x,y) \dy,
\end{eqnarray*}
We  have, by Lagrange's Theorem, $\eqref{rev21}_{1}$, $\eqref{pq}_{1}$ and \eqref{K}, that
\begin{equation*}
{IV} \ \le \ c\kappa^{p-1}\int_{B_{7/4}}\snr{y}^{p-1-n-{s} p} \dy\longrightarrow_{\kappa\to 0^{+}}0,
\end{equation*}
with $c=c(n,p,q,\Lambda,{s})$. {Again, we can fix $\eta_{2}$ and $\kappa_{2}$ small enough so that \eqref{P11} is satisfied}.

\noindent
\\ \emph{Case 3: $2\ge q>{1}/{(1-{t})}$ and $2\ge p>1/(1-s)$.}\, In this case we will only consider the term in the second line of \eqref{P12},
\begin{eqnarray*}
V\!\!&:=&\!\!(3^{q-1}+2^{q-1})\kappa^{q-1} \int_{x+y\in B_{1}}{\hat{c}}a(x,y)\snr{\beta(x)-\beta(x+y)}^{q-1}K_{tq}(x,y) \dy.
\end{eqnarray*}
By Lagrange's Theorem, $\eqref{rev21}_{1}$, \eqref{a}, $\eqref{pq}_{2}$ and \eqref{K} we obtain
\[
V\ \le\  c\int_{B_{7/4}}\snr{y}^{q-1-n-{t} q} \dy\longrightarrow_{\kappa\to 0^{+}}0,
\]
where $c=c(n,q,M,\Lambda,{t},{\hat{c}})$. {Hence, we can find $\eta_{3}$ and $\kappa_{3}$ for which \eqref{P12} holds true, and this concludes the proof.}
\end{proof}
\vspace{1mm}

\begin{remark}\label{R1}
\emph{{From now on, we will work with parameters $\dys \eta:=\min_{i=\{1,2,3\}}\eta_{i}>0$ and $\dys \kappa:=\min_{i \in \{1,2,3\}}\kappa_{i}>0$, where the $\eta_{i}$'s, $\kappa_{i}$'s are those determined in Proposition \ref{prop_cumbersome}. This choice is clearly motivated by the fact that these definitions assure that \eqref{P10}--\eqref{P12} are simultaneously matched.}}
\end{remark}

\begin{lemma}\label{lem_positivity}
Under assumptions \eqref{K}--\eqref{energiafinita}, let~$\eta$ be as the one determined in Remark~\ref{R1}, and set
\begin{flalign}\label{sigma}
\sigma:=2^{q-1}\!\int_{y \not \in B_{1/4}}\left((\snr{8y}^{\eta}-1)^{p-1}\snr{y}^{-n-{s} p}+(\snr{8y}^{\eta}-1)^{q-1}\snr{y}^{-n-{t} q}\right)\!\dy. 
\end{flalign}
If $u$ is such that~$\snr{B_{1}\cap \{x\colon u(x)\le 0\}}>\varepsilon$ and
\begin{equation}\label{ass}
\begin{cases}
\ \hat{\mathcal{L}}u\leq\sigma\quad &\mbox{in} \ B_{1},\\*[0.5ex]
\ u\le 1 \quad &\mbox{in} \ B_{1},\\*[0.5ex]
\ u(x)\le 2\snr{2x}^{\eta}-1 \quad &\mbox{in} \ \mathds{R}^{n}\setminus B_{1},\\*[0.5ex]
\end{cases}
\end{equation}
then there exists $\theta=\theta(n,p,q,s,t,M,\Lambda,{\hat{c}})>0$ such that
\begin{equation*}
u\, \le\, 1-\theta \  \text{in } B_{1/2}.
\end{equation*} 
\end{lemma}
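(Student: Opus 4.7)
The plan is to argue by contradiction, following the barrier strategy of Silvestre~\cite{Sil06} and adapting it to the double phase structure. Suppose for contradiction that $u(x^{*})>1-\theta$ for some $x^{*}\in B_{1/2}$ and for $\theta>0$ to be selected at the end, and introduce the upper semicontinuous competitor
\[
v(x):=u(x)+\kappa\beta(x)\chi_{B_{1}}(x)-2\big(\snr{2x}^{\eta}-1\big)\chi_{\mathds{R}^{n}\setminus B_{1}}(x).
\]
The last two lines of \eqref{ass} force $v\le 1$ on $\mathds{R}^{n}\setminus B_{1}$, while $v(x^{*})\ge 1-\theta+\kappa\beta(x^{*})>1$ as soon as $\theta<\kappa\beta(1/2)$. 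Upper semicontinuity then produces a global maximum point $\bar x$ of $v$ lying in the interior of $B_{1}$; a further smallness condition on $\theta$, comparing the values of $\beta$ on $B_{1/2}$ and on $B_{1}\setminus B_{3/4}$, forces $\bar x\in B_{3/4}$, which is exactly the range for which Proposition~\ref{prop_cumbersome} is available.

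The global maximality of $v$ at $\bar x$ produces the natural touching function from above
\[
\Phi(x):=\begin{cases}u(\bar x)+\kappa(\beta(\bar x)-\beta(x)), & x\in B_{1},\\[1ex] u(\bar x)+\kappa\beta(\bar x)+2\big(\snr{2x}^{\eta}-1\big), & x\notin B_{1},\end{cases}
\]
satisfying $\Phi\ge u$ on $\mathds{R}^{n}$ with equality at $\bar x$, and of class $C^{2}$ in a neighborhood of $\bar x$ contained in $B_{1}$. Proposition~\ref{prop_classic} applied to $\hat{\mathcal{L}}$ then upgrades the viscosity inequality to the pointwise statement $\hat{\mathcal{L}}u(\bar x)\le\sigma$.

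The core of the argument is a splitting of the integrals in $\hat{\mathcal{L}}u(\bar x)$ against the set
\[
\Omega_{0}:=\{y\in\mathds{R}^{n}:\bar x+y\in B_{1},\ u(\bar x+y)\le 0\},
\]
which has $\snr{\Omega_{0}}\ge\varepsilon$ by hypothesis and is contained in $B_{2}$. On $\Omega_{0}$ the elementary bound $u(\bar x)-u(\bar x+y)\ge u(\bar x)\ge 1-\kappa\ge 1/2$ (for $\kappa$ small, using $v(\bar x)>1$ and $\beta\le 1$) delivers a positive $p$-contribution of order $2^{1-p}\Lambda^{-1}\varepsilon\,2^{-n-sp}$; on the complement, monotonicity of $t\mapsto\snr{t}^{r-2}t$ together with $\Phi\ge u$, $\Phi(\bar x)=u(\bar x)$ gives
\[
\snr{u(\bar x)-u(\bar x+y)}^{r-2}(u(\bar x)-u(\bar x+y))\ge\snr{\Phi(\bar x)-\Phi(\bar x+y)}^{r-2}(\Phi(\bar x)-\Phi(\bar x+y))
\]
for $r\in\{p,q\}$. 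The $\Phi$-increments are $\kappa(\beta(\bar x+y)-\beta(\bar x))$ on $\{\bar x+y\in B_{1}\}$ and $-[\kappa\beta(\bar x)+2(\snr{2(\bar x+y)}^{\eta}-1)]$ on $\{\bar x+y\notin B_{1}\}$, so the induced integrals match -- up to signs and absolute constants -- the quantities on the left-hand sides of \eqref{P10}, \eqref{P11} and \eqref{P12} (depending on the regime of $p,q$), and are therefore dominated by $\varepsilon/(\Lambda 2^{n+sp+q})$. Since by the choice of $\kappa,\eta$ from Remark~\ref{R1} the positive gain strictly exceeds the sum of the barrier error and the tail $\sigma$ in \eqref{sigma}, one reaches a contradiction with $\hat{\mathcal{L}}u(\bar x)\le\sigma$ and concludes with a quantitative $\theta=\theta(n,p,q,s,t,M,\Lambda,\hat c)>0$.

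The main technical obstacle is running this splitting uniformly across the three regimes of Proposition~\ref{prop_cumbersome} ($q\ge p\ge 2$; $q\ge 2\ge p>1/(1-s)$; $p,q<2$), since each regime forces a different algebraic identity and different multiplicative constants in front of the barrier integrals; all of these must be reabsorbed into the positive $\Omega_{0}$-gain, forcing $\kappa$, $\eta$ and $\theta$ to be chosen simultaneously small in the right order. A more subtle point, which motivates the preliminary rescaling from $\mathcal{L}$ to $\hat{\mathcal{L}}$, is that the $q$-part of the barrier integral carries an unavoidable weight $\hat{c}M$: treating $\hat c$ as a free parameter ensures that the dominant $p$-gain on $\Omega_{0}$ is not overwhelmed by the $q$-error of the barrier.
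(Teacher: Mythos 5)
Your setup (the barrier $\kappa\beta$, the localization of the maximum point $\bar x$ in $B_{3/4}$ via $\theta<\kappa(\beta(1/2)-\beta(3/4))$, the touching function $\Phi$ and Proposition~\ref{prop_classic} to get $\hat{\mathcal{L}}u(\bar x)\le\sigma$ pointwise, and the positive gain of order $\varepsilon\Lambda^{-1}2^{-n-sp}$ coming from the set where $u\le 0$) follows the same skeleton as the paper. The difference is in how the barrier error is generated: the paper bounds $\hat{\mathcal{L}}(u+\kappa\beta)(\bar x)$ from \emph{below} (where, $\bar x$ being an interior maximum of $u+\kappa\beta$ in $B_1$, the inner integrand is nonnegative and only the set $\{u\le 0\}$ is kept) and from \emph{above} (via Lemmas~\ref{lem_superlinear}--\ref{lem_singular}, producing the barrier integrals over the \emph{full} regions $\{\bar x+y\in B_1\}$ and $\{\bar x+y\notin B_1\}$, exactly as they appear in \eqref{P10}--\eqref{P12}), whereas you bound $\hat{\mathcal{L}}u(\bar x)$ from below directly, comparing the $u$-increments with the $\Phi$-increments by monotonicity of $t\mapsto|t|^{r-2}t$, but only on the complement of $\Omega_0$.

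This last step is where there is a genuine gap, in the regime $p\ge 2$. The inner $p$-term of \eqref{P10} is the \emph{signed} integral $\int_{x+y\in B_1}|\beta(x)-\beta(x+y)|^{p-2}(\beta(x)-\beta(x+y))K_{sp}(x,y)\,dy$, and its smallness in Proposition~\ref{prop_cumbersome} is obtained only after the symmetrization $y\mapsto -y$ and Lemma~\ref{revL1}, which exploit the second-order cancellation of $\beta$ over a region symmetric in $y$. Under \eqref{pq} nothing prevents $sp\ge p-1$ when $p\ge 2$, and in that case the corresponding absolute-value integral $\int_{B_r}|\beta(\bar x)-\beta(\bar x+y)|^{p-1}K_{sp}(\bar x,y)\,dy$ diverges; so there is no comparison "up to signs and absolute constants" with the left-hand side of \eqref{P10}. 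Restricting the $\Phi$-comparison to $\Omega_0^{c}$ makes matters worse: the set $\Omega_0$ depends on $u$, is not symmetric about the origin, and may approach $y=0$ at the uncontrolled scale $r_0:=\operatorname{dist}(\bar x,\{u\le 0\})$ (a quantitative lower bound on $r_0$ is precisely the kind of information the lemma is supposed to yield), so the restricted signed integral can be of size $-c\,\kappa^{p-1}r_0^{\,p-1-sp}$, which is not dominated by $\varepsilon/(\Lambda 2^{n+sp+q})$ and the contradiction does not follow. (For the $q$-part and for $p<2$ the assumptions $q>1/(1-t)$, $p>1/(1-s)$ give absolute convergence, so there your restricted bound is fine.) The fix is either the paper's two-sided estimate of $\hat{\mathcal{L}}(u+\kappa\beta)(\bar x)$, or, within your scheme, to apply the monotone comparison with $\Phi$ on \emph{all} of $\mathds{R}^{n}$ — keeping the barrier integrals over the full symmetric regions where Proposition~\ref{prop_cumbersome}'s cancellation argument applies — and to recover the positive gain on $\Omega_0$ from the difference of the two integrands there, using $u(\bar x)-u(\bar x+y)\ge 1/2$ against $|\Phi(\bar x)-\Phi(\bar x+y)|\le\kappa$ on that set.
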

\begin{proof}
Let $\kappa$ be as in Remark \ref{R1} and set $\theta:=\kappa(\beta(1/2)-\beta(3/4))=95\kappa/256$. By~contradiction, suppose that there is $x_{0}\in B_{1/2}$ such that $u(x_{0})>1-\theta$. Then $u(x_{0})+\kappa \beta(1/2)>1+\kappa \beta(3/4)$. Moreover, since $\beta$ is non increasing along rays, for any $y\in B_{1}\setminus B_{3/4}$ there holds 
\begin{flalign*}
u(x_{0})+\kappa\beta(x_{0})\ > \ u(x_{0})+\kappa\beta(1/2)\ >\ 1+\kappa\beta(3/4)\ \ge\  u(y)+\kappa\beta(y).
\end{flalign*}
This shows that the maximum of $u+\kappa\beta$ in $B_{1}$ is attained in $B_{3/4}$ and it is strictly larger than one. Let $\bar{x}\in B_{3/4}$ be such a maximum point. We aim to estimate~$\mathcal{L}(u+\kappa\beta)(\bar{x})$ from above and below as to reach a contradiction with \eqref{P10}--\eqref{P12}. Notice that $-\kappa\beta+(u+\kappa\beta)(\bar{x})$ touches $u$ from above at $\bar{x}$, thus, being $\beta \in C^{2}_{c}(B_{1})$, by Proposition~\ref{prop_classic},
\begin{flalign}\label{16}
\hat{\mathcal{L}}u(\bar{x})\, \le \, \sigma \ \ \mbox{in the pointwise sense}.
\end{flalign}
We begin by estimating $\hat{\mathcal{L}}(u+\kappa\beta)(\bar{x})$ from below. We then rewrite
\begin{flalign*}
\mathcal{L}(u+\kappa\beta)(\bar{x})\, =\, \lim_{\rr\to 0^{+}}I_{\rr}+I_{2},
\end{flalign*}
where
\begin{eqnarray*}
&&I_{\rr}\ :=\ \int_{\bar{x}+y\in B_{1},y \not \in B_{\rr}}\big(\snr{u(\bar{x})+\kappa\beta(\bar{x})-u(\bar{x}+y)-\kappa\beta(\bar{x}+y)}^{p-2}\\*
&&\qquad \qquad \qquad \qquad \quad \ \times(u(\bar{x})+\kappa\beta(\bar{x})-u(\bar{x}+y)-\kappa\beta(\bar{x}+y))K_{sp}(\bar{x},y)\big) \dy\\*
&&\qquad \quad \, +\, \int_{\bar{x}+y\in B_{1},y \not \in B_{\rr}}\big(\hat{c}a(\bar{x},y)\snr{u(\bar{x})+\kappa\beta(\bar{x})-u(\bar{x}+y)-\kappa\beta(\bar{x}+y)}^{q-2}\\*
&&\qquad \qquad \qquad \qquad \qquad  \ \ \times(u(\bar{x})+\kappa\beta(\bar{x})-u(\bar{x}+y)-\kappa\beta(\bar{x}+y))K_{tq}(\bar{x},y) \big)\dy,\\[1ex]
&&I_{2}\ :=\ \int_{\bar{x}+y\not \in B_{1}}\big(\snr{u(\bar{x})+\kappa\beta(\bar{x})-u(\bar{x}+y)-\kappa\beta(\bar{x}+y)}^{p-2}\\*
&&\qquad \qquad \qquad \quad \ \times(u(\bar{x})+\kappa\beta(\bar{x})-u(\bar{x}+y)-\kappa\beta(\bar{x}+y))K_{sp}(\bar{x},y)\big) \dy\\*
&&\qquad \quad \, +\, \int_{\bar{x}+y\not \in B_{1}}\big(\hat{c}a(\bar{x},y)\snr{u(\bar{x})+\kappa\beta(\bar{x})-u(\bar{x}+y)-\kappa\beta(\bar{x}+y)}^{q-2}\\*
&&\qquad \qquad \qquad \qquad \ \,  \times(u(\bar{x})+\kappa\beta(\bar{x})-u(\bar{x}+y)-\kappa\beta(\bar{x}+y))K_{tq}(\bar{x},y)\big) \dy.
\end{eqnarray*}
 Set $A_{0}:=\big\{\bar{x}+y \in B_{1}\colon u(\bar{x}+y)\le 0\big\}$; by using that $u(\bar{x})+\kappa\beta(\bar{x})>1$ is the maximum of $u+\kappa\beta$ in $B_{1}$, we see that the integrand in $I_{\rr}$ is nonnegative and we have the following estimate
\[
I_{\rr}\ \ge\ \int_{A_{0}\cap B_{\rr}^{c}}\big((1-\kappa\beta(\bar{x}+y))^{p-1}K_{sp}(\bar{x},y)+\hat{c}a(\bar{x},y)(1-\kappa\beta(\bar{x}+y))^{q-1}K_{tq}(\bar{x},y)\big) \dy,
\]
Since $\beta\le 1$ and $\kappa\le 1/2$, recalling also that $\hat{c}>0$ and, by \eqref{a}, $a(\cdot,\cdot)$ is nonnegative, we can deduce
\begin{eqnarray*}
\liminf_{\rr\to 0}I_{\rr} \!\! &\ge &\!\! 2^{1-q}\inf_{A\subset B_{2}, \snr{A}>\varepsilon}\int_{A}\big(K_{sp}(\bar{x},y)+\hat{c}a(\bar{x},y)K_{tq}(\bar{x},y)\big) \dy \\*[1ex]
& \ge & \!\!2^{1-q}\inf_{A\subset B_{2}, \snr{A}>\varepsilon}\int_{A}K_{sp}(\bar{x},y) \dy
\ \ge\ \frac{\varepsilon}{\Lambda 2^{n+{s} p+q-1}}.
\end{eqnarray*}
Let us take care of $I_{2}$. Exploiting that $u(\bar{x})+\kappa\beta(\bar{x})>1$, $\eqref{ass}_{3}$ and that $\beta\equiv 0$ in~$\mathds{R}^{n}\setminus B_{1}$, we have
\begin{eqnarray*}
I_{2}\!\!&\ge &\!\!\int_{\bar{x}+y\not \in B_{1}}2^{p-1}\big|{1-\snr{2(\bar{x}+y)}^{\eta}}\big|^{p-2}\big(1-\snr{2(\bar{x}+y)}^{\eta}\big)K_{sp}(\bar{x},y) \dy\\*
&&\!\! +\ \int_{\bar{x}+y\not \in B_{1}}2^{q-1}\hat{c}a(\bar{x},y)\big|{1-\snr{2(\bar{x}+y)}^{\eta}}\big|^{q-2}\big(1-\snr{2(\bar{x}+y)}^{\eta}\big)K_{tq}(\bar{x},y) \dy\\[1ex]
&\ge &\!\!\int_{y \not \in B_{1/4}}2^{p-1}\left |\  1-\left| \ 2\left(\frac{3}{4}+\snr{y}\right) \ \right |^{\eta} \ \right |^{p-2}\left(1-\left | \ 2\left(\frac{3}{4}+\snr{y}\right) \  \right |^{\eta}\right)K_{sp}(\bar{x},y) \dy\\*
&&\!\! +\ \int_{y \not \in B_{1/4}}2^{q-1}\hat{c}a(\bar{x},y)\left |\  1-\left| \ 2\left(\frac{3}{4}+\snr{y}\right) \ \right |^{\eta} \ \right |^{q-2}\left(1-\left | \ 2\left(\frac{3}{4}+\snr{y}\right) \  \right |^{\eta}\right)K_{tq}(\bar{x},y) \dy\\[1ex]
&\ge &\!\!-2^{q-1}\int_{y \not \in B_{1/4}}\big((\snr{8y}^{\eta}-1)^{p-1}K_{sp}(\bar{x},y)+\hat{c}a(\bar{x},y)(\snr{8y}^{\eta}-1)^{q-1}K_{tq}(\bar{x},y)\big) \dy.
\end{eqnarray*}
Adding together the content of the  two displays above, we obtain
\begin{eqnarray}\label{14}
\mathcal{L}(u+\kappa\beta)(\bar{x}) 
\!\!& \ge &\!\! \frac{\varepsilon}{\Lambda 2^{n+{s} p+q-1}} \nonumber \\*
&& \!\! -\ 2^{q-1}\int_{y \not \in B_{1/4}}\big((\snr{8y}^{\eta}-1)^{p-1}K_{sp}(\bar{x},y)+\hat{c}a(\bar{x},y)(\snr{8y}^{\eta}-1)^{q-1}K_{tq}(\bar{x},y)\big) \dy.
\end{eqnarray}

Now, we want to estimate $\mathcal{L}(u+\kappa\beta)(\bar{x})$ from above, and for this we need to distinguish the following three cases: $q\ge p\ge 2$,\, $q\ge 2\ge p$, and $2\ge q\ge p$.\\\\
\emph{Case 1: $q\ge p\ge 2$ and $q>{1}/{(1-t)}$.} We split the integral into two parts:
\[
\mathcal{L}(u+\kappa\beta)(\bar{x})\,=\,I_{1}+I_{2},
\]
where
\begin{eqnarray*}
&&I_{1}\ :=\ \int_{\bar{x}+y\in B_{1}}
\big(\snr{u(\bar{x})+\kappa\beta(\bar{x})-u(\bar{x}+y)-\kappa\beta(\bar{x}+y)}^{p-2}\\*
&&\qquad\qquad \qquad \quad \times(u(\bar{x})+\kappa\beta(\bar{x})-u(\bar{x}+y)-\kappa\beta(\bar{x}+y))K_{sp}(\bar{x},y)\big) \dy\\*
&&\qquad \quad+\ \int_{\bar{x}+y \in B_{1}}
\big(\hat{c}a(\bar{x},y)\snr{u(\bar{x})+\kappa\beta(\bar{x})-u(\bar{x}+y)-\kappa\beta(\bar{x}+y)}^{q-2}\\*
&&\qquad \quad\qquad\qquad \quad \ \times(u(\bar{x})+\kappa\beta(\bar{x})-u(\bar{x}+y)-\kappa\beta(\bar{x}+y))K_{tq}(\bar{x},y)\big) \dy,\\
&&I_{2}\ :=\ \int_{\bar{x}+y\not \in B_{1}}
\big(\snr{u(\bar{x})+\kappa\beta(\bar{x})-u(\bar{x}+y)-\kappa\beta(\bar{x}+y)}^{p-2}\\*
&&\qquad\qquad \qquad \quad \times(u(\bar{x})+\kappa\beta(\bar{x})-u(\bar{x}+y)-\kappa\beta(\bar{x}+y))K_{sp}(\bar{x},y)\big) \dy\\
&&\qquad \quad +\ \int_{\bar{x}+y\not \in B_{1}}\big(\hat{c}a(\bar{x},y)\snr{u(\bar{x})+\kappa\beta(\bar{x})-u(\bar{x}+y)-\kappa\beta(\bar{x}+y)}^{q-2}\\*
&&\qquad \quad\qquad\qquad \quad \ \times(u(\bar{x})+\kappa\beta(\bar{x})-u(\bar{x}+y)-\kappa\beta(\bar{x}+y))K_{tq}(\bar{x},y) \big)\dy,
\end{eqnarray*}

We start from $I_{1}$ by noticing that when $\bar{x}+y\in B_{1}$, \begin{flalign}\label{25}
u(\bar{x})+\kappa\beta(\bar{x})-u(\bar{x}+y)-\kappa\beta(\bar{x}+y)\, \ge\, 0,
\end{flalign}
given that $u+\kappa \beta$ attains its maximum in $B_{1}$ in $\bar{x}$. So we can apply Lemma \ref{lem_superlinear} to get
\begin{eqnarray*}
&&\left|u(\bar{x})+\kappa\beta(\bar{x})-u(\bar{x}+y)-\kappa\beta(\bar{x}+y)\right |^{r-2}(u(\bar{x})+\kappa\beta(\bar{x})-u(\bar{x}+y)-\kappa\beta(\bar{x}+y))\\[1ex]
&&\quad\qquad \qquad  \le \ 2^{q-2}\big(\snr{u(\bar{x})-u(\bar{x}+y)}^{r-2}(u(\bar{x})-u(\bar{x}+y))+\snr{\kappa\beta(\bar{x})-\kappa\beta(\bar{x}+y)}^{r-2}\\
&& \quad\qquad \qquad \qquad\quad \ \, \times(\kappa\beta(\bar{x})-\kappa\beta(\bar{x}+y))\big),
\end{eqnarray*}
for $r \in \{p,q\}$. Hence, recalling also \eqref{a} and the fact that $\hat{c}>0$,
\begin{eqnarray*}
I_{1}&\le& 2^{q-2} \int_{\bar{x}+y\in B_{1}}\snr{u(\bar{x})-u(\bar{x}+y)}^{p-2}(u(\bar{x})-u(\bar{x}+y))K_{sp}(\bar{x},y) \dy\\
&&+\ 2^{q-2}\kappa^{p-1} \int_{\bar{x}+y\in B_{1}}\snr{\beta(\bar{x})-\beta(\bar{x}+y)}^{p-2}(\beta(\bar{x})-\beta(\bar{x}+y))K_{sp}(\bar{x},y) \dy\\
&&+\ 2^{q-2} \int_{\bar{x}+y\in B_{1}}\hat{c}a(\bar{x},y)\snr{u(\bar{x})-u(\bar{x}+y)}^{q-2}(u(\bar{x})-u(\bar{x}+y))K_{tq}(\bar{x},y) \dy\\
&&+\ 2^{q-2}\kappa^{q-1} \int_{\bar{x}+y\in B_{1}}\hat{c}a(\bar{x},y)\snr{\beta(\bar{x})-\beta(\bar{x}+y)}^{q-1}K_{tq}(\bar{x},y) \dy.
\end{eqnarray*}

Now let us consider $I_{2}$. Here it is not possible to apply directly Lemma \ref{lem_superlinear}, but from the assumptions we still have
\begin{flalign*}
u(\bar{x})+\kappa\beta(\bar{x})\,>\,1 \quad \mbox{and}\quad u(\bar{x}+y)+\kappa\beta(\bar{x}+y)\,\le\, 2\snr{2(\bar{x}+y)}^{\eta}-1,
\end{flalign*}
which means that
\begin{flalign*}
u(\bar{x})-u(\bar{x}+y)+\kappa\beta(\bar{x})-\kappa\beta(\bar{x}+y)\, >\, 2(1-\snr{2(\bar{x}+y)}^{\eta}).
\end{flalign*}
Adding $2(\snr{2(\bar{x}+y)}^{\eta}-1)>0$ to both sides of the preceding inequality we increase the integrand and we make it nonnegative:
\begin{flalign}\label{26}
u(\bar{x})-u(\bar{x}+y)+\kappa\beta(\bar{x})-\kappa\beta(\bar{x}+y)+2(\snr{2(\bar{x}+y)}^{\eta}-1)\, \ge \, 0
\end{flalign}
so, again, we can use Lemma \ref{lem_superlinear}. We obtain
\begin{eqnarray*}
I_{2}&\le &\int_{\bar{x}+y\not \in B_{1}}\big(\snr{u(\bar{x})-u(\bar{x}+y)+\kappa\beta(\bar{x})-\kappa\beta(\bar{x}+y)+2(\snr{2(\bar{x}+y)}^{\eta}-1)}^{p-2}\\*
&&\qquad \qquad \ \times (u(\bar{x})-u(\bar{x}+y)+\kappa\beta(\bar{x})-\kappa\beta(\bar{x}+y)+2(\snr{2(\bar{x}+y)}^{\eta}-1))K_{sp}(\bar{x},y) \big)\dy \\*
&&+\ \int_{\bar{x}+y\not \in B_{1}}\big(\hat{c}a(\bar{x},y)\snr{u(\bar{x})-u(\bar{x}+y)+\kappa\beta(\bar{x})-\kappa\beta(\bar{x}+y)+2(\snr{2(\bar{x}+y)}^{\eta}-1)}^{q-2}\\*
&&\qquad \qquad \quad\ \times \big(u(\bar{x})-u(\bar{x}+y)+\kappa\beta(\bar{x})-\kappa\beta(\bar{x}+y)+2(\snr{2(\bar{x}+y)}^{\eta}-1)\big)K_{tq}(\bar{x},y)\big) \dy\\[1ex]
&\le & 2^{q-2}\int_{\bar{x}+y\not \in B_{1}}\snr{u(\bar{x})-u(\bar{x}+y)}^{p-2}(u(\bar{x})-u(\bar{x}-y))K_{sp}(\bar{x},y) \dy\\*
&&+\ 2^{q-2}\int_{\bar{x}+y\not \in B_{1}}\hat{c}a(\bar{x},y)\snr{u(\bar{x})-u(\bar{x}+y)}^{q-2}(u(\bar{x})-u(\bar{x}+y))K_{tq}(\bar{x},y) \dy\\*
&&+\ 2^{q-2}\int_{\bar{x}+y\not \in B_{1}}
\big(\snr{\kappa \beta(\bar{x})-\kappa\beta(\bar{x}+y)+2(\snr{2(\bar{x}+y)}^{\eta}-1)}^{p-2}\\*
&&\qquad \qquad \qquad \quad \times (\kappa \beta(\bar{x})-\kappa\beta(\bar{x}+y)+2(\snr{2(\bar{x}+y)}^{\eta}-1))K_{sp}(\bar{x},y)\big) \dy\\*
&&+\ 2^{q-2}\int_{\bar{x}+y\not \in B_{1}}
\big(\hat{c}a(\bar{x},y)\snr{\kappa \beta(\bar{x})-\kappa\beta(\bar{x}+y)+2(\snr{2(\bar{x}+y)}^{\eta}-1)}^{q-2}\\*
&&\qquad \qquad \qquad\quad\, \times (\kappa \beta(\bar{x})-\kappa\beta(\bar{x}+y)+2(\snr{2(\bar{x}+y)}^{\eta}-1))K_{tq}(\bar{x},y)\big) \dy.
\end{eqnarray*}
Adding the estimates for terms $I_{1}$ and $I_{2}$ and recalling \eqref{16}, we end up with
\begin{flalign}\label{15}
&\mathcal{L}(u+\kappa\beta)(\bar{x}) \nonumber \\*
&\le \ \mathcal{L}u(\bar{x})+2^{q-2}\kappa^{p-1} \int_{\bar{x}+y\in B_{1}}\snr{\beta(\bar{x})-\beta(\bar{x}+y)}^{p-2}(\beta(\bar{x})-\beta(\bar{x}+y))K_{sp}(\bar{x},y) \dy\nonumber \\*
&\quad \ +\, 2^{q-2}\kappa^{q-1} \int_{\bar{x}+y\in B_{1}}\hat{c}a(\bar{x},y)\snr{\beta(\bar{x})-\beta(\bar{x}+y)}^{q-2}(\beta(\bar{x})-\beta(\bar{x}+y))K_{tq}(\bar{x},y) \dy\nonumber \\*
&\quad \ +\, 2^{q-2}\int_{\bar{x}+y\not \in B_{1}}\snr{\kappa \beta(\bar{x})-\kappa\beta(\bar{x}+y)+2(\snr{2(\bar{x}+y)}^{\eta}-1)}^{p-2}\nonumber \\*
&&\hspace{3cm} \times (\kappa \beta(\bar{x})-\kappa\beta(\bar{x}+y)+2(\snr{2(\bar{x}+y)}^{\eta}-1))K_{sp}(\bar{x},y) \dy\nonumber \\*
&\quad\ +\, 2^{q-2}\int_{\bar{x}+y\not \in B_{1}}\hat{c}a(\bar{x},y)\snr{\kappa \beta(\bar{x})-\kappa\beta(\bar{x}+y)+2(\snr{2(\bar{x}+y)}^{\eta}-1)}^{q-2}\nonumber \\*
&\hspace{3cm} \times(\kappa \beta(\bar{x})-\kappa\beta(\bar{x}+y)+2(\snr{2(\bar{x}+y)}^{\eta}-1))K_{tq}(\bar{x},y) \dy\nonumber \\[1ex]
& \le \ \sigma + 2^{q-2}\kappa^{p-1} \int_{\bar{x}+y\in B_{1}}\snr{\beta(\bar{x})-\beta(\bar{x}+y)}^{p-1}K_{sp}(\bar{x},y) \dy\nonumber \\*
&\quad \ +\ 2^{q-2}\kappa^{q-1} \int_{\bar{x}+y\in B_{1}}\hat{c}a(\bar{x},y)\snr{\beta(\bar{x})-\beta(\bar{x}+y)}^{q-1}K_{tq}(\bar{x},y) \dy\nonumber \\*
&\quad \ +\ 2^{q-2}\int_{\bar{x}+y\not \in B_{1}}\big|{\kappa \beta(\bar{x})-\kappa\beta(\bar{x}+y)+2\big(\snr{2(\bar{x}+y)}^{\eta}-1\big)}\big|^{p-1}K_{sp}(\bar{x},y) \dy\nonumber \\*
&\quad\  +\ 2^{q-2}\int_{\bar{x}+y\not \in B_{1}}\hat{c}a(\bar{x},y)\big|{\kappa \beta(\bar{x})-\kappa\beta(\bar{x}+y)+2\big(\snr{2(\bar{x}+y)}^{\eta}-1\big)}\big|^{q-1}K_{tq}(\bar{x},y) \dy,
\end{flalign}
where we also used~\eqref{ass}$_1$.
Coupling \eqref{14} and \eqref{15}, together with the definition of~$\sigma$, and also recalling~\eqref{modulo_lambda}, 
we conclude that
\begin{eqnarray*}
\frac{\varepsilon}{\Lambda 2^{n+{s} p+q-1}}\!\!&\le &\!\!(2+\hat{c}M)2^{q-1}\int_{y \not \in B_{1/4}}\big((\snr{8y}^{\eta}-1)^{p-1}K_{sp}(\bar{x},y)+(\snr{8y}^{\eta}-1)^{q-1}K_{tq}(\bar{x},y)\big) \dy\\*
&&\!\! +\ 2^{q-2}\kappa^{p-1} \int_{\bar{x}+y\in B_{1}}\snr{\beta(\bar{x})-\beta(\bar{x}+y)}^{p-2}(\beta(x)-\beta(x+y))K_{sp}(\bar{x},y) \dy\nonumber \\*
&&\!\! +\ 2^{q-2}\kappa^{q-1} \int_{\bar{x}+y\in B_{1}}\hat{c}a(\bar{x},y)\snr{\beta(\bar{x})-\beta(\bar{x}+y)}^{q-1}K_{tq}(\bar{x},y) \dy\nonumber \\*
&&\!\! +\ 2^{q-2}\int_{\bar{x}+y\not \in B_{1}}\big|{\kappa \beta(\bar{x})-\kappa\beta(\bar{x}+y)+2\big(\snr{2(\bar{x}+y)}^{\eta}-1\big)}\big|^{p-1}K_{sp}(\bar{x},y) \dy\nonumber \\*
&&\!\! +\ 2^{q-2}\int_{\bar{x}+y\not \in B_{1}}\hat{c}a(\bar{x},y)\big|{\kappa \beta(\bar{x})-\kappa\beta(\bar{x}+y)+2\big(\snr{2(\bar{x}+y)}^{\eta}-1\big)}\big|^{q-1}K_{tq}(\bar{x},y) \dy\nonumber\\*
&\dys \stackleq{P10}&\!\!\frac{\varepsilon}{\Lambda 2^{n+{s} p+q}},
\end{eqnarray*}
which is clearly a contradiction.\\\\
\emph{Case 2: $q\ge 2\ge p>{1}/{(1-{s})}$ and $q>{1}/{(1-t)}$.}~We look again at terms $I_{1}$, $I_{2}$ and we split them as
\begin{flalign*}
I_{1}=I_{1}^{p}+I_{1}^{q}\quad \mbox{and}\quad I_{2}=I_{2}^{p}+I_{2}^{q},
\end{flalign*}
where 
\begin{eqnarray*}
I_{1}^{p}\!\!&:=&\!\! \int_{\bar{x}+y\in B_{1}}\snr{u(\bar{x})+\kappa\beta(\bar{x})-u(\bar{x}+y)-\kappa\beta(\bar{x}+y)}^{p-2}\\*
&&\qquad \quad  \ \times(u(\bar{x})+\kappa\beta(\bar{x})-u(\bar{x}+y)-\kappa\beta(\bar{x}+y))K_{sp}(\bar{x},y) \dy,\\
I_{1}^{q}\!\!&:=&\!\! \int_{\bar{x}+y \in B_{1}}\hat{c}a(\bar{x},y)\snr{u(\bar{x})+\kappa\beta(\bar{x})-u(\bar{x}+y)-\kappa\beta(\bar{x}+y)}^{q-2}\\*
&&\qquad\quad \, \, \times(u(\bar{x})+\kappa\beta(\bar{x})-u(\bar{x}+y)-\kappa\beta(\bar{x}+y))K_{tq}(\bar{x},y) \dy,\\
I_{2}^{p}\!\!&:=&\!\!\int_{\bar{x}+y\not \in B_{1}}\snr{u(\bar{x})+\kappa\beta(\bar{x})-u(\bar{x}+y)-\kappa\beta(\bar{x}+y)}^{p-2}\\*
&&\qquad\quad\, \times(u(\bar{x})+\kappa\beta(\bar{x})-u(\bar{x}+y)-\kappa\beta(\bar{x}+y))K_{sp}(\bar{x},y) \dy,\\
I_{2}^{q}\!\!&:=&\!\!\int_{\bar{x}+y\not \in B_{1}}\hat{c}a(\bar{x},y)\snr{u(\bar{x})+\kappa\beta(\bar{x})-u(\bar{x}+y)-\kappa\beta(\bar{x}+y)}^{q-2}\\*
&&\qquad \quad \, \,(u(\bar{x})+\kappa\beta(\bar{x})-u(\bar{x}+y)-\kappa\beta(\bar{x}+y))K_{tq}(\bar{x},y) \dy.
\end{eqnarray*}
Being $q\ge 2$, the estimates for terms $I_{1}^{q}$, $I_{2}^{q}$ are the same as those in the previous case, therefore we have
\begin{eqnarray}\label{22}
I_{1}^{q}\!\!&\le &\!\!2^{q-2} \int_{\bar{x}+y\in B_{1}}\hat{c}a(\bar{x},y)\snr{u(\bar{x})-u(\bar{x}+y)}^{q-2}(u(\bar{x})-u(\bar{x}+y))K_{tq}(\bar{x},y) \dy\nonumber \\*
&&\!\!+\ 2^{q-2}\kappa^{q-1} \int_{\bar{x}+y\in B_{1}}\hat{c}a(\bar{x},y)\snr{\beta(\bar{x})-\beta(\bar{x}+y)}^{q-1}K_{tq}(\bar{x},y) \dy
\end{eqnarray}
and 
\begin{eqnarray}\label{23}
I_{2}^{q}\!\!&\le &\!\!2^{q-2}\int_{\bar{x}+y\not \in B_{1}}\hat{c}a(\bar{x},y)\snr{u(\bar{x})-u(\bar{x}+y)}^{q-2}(u(\bar{x})-u(\bar{x}+y))K_{tq}(\bar{x},y) \dy\nonumber\\*
&&\!\!+\ 2^{q-2}\int_{\bar{x}+y\not \in B_{1}}\hat{c}a(\bar{x},y)\snr{\kappa \beta(\bar{x})-\kappa\beta(\bar{x}+y)+2(\snr{2(\bar{x}+y)}^{\eta}-1)}^{q-2}\nonumber \\*
&&\qquad \qquad \qquad\ \times \big(\kappa \beta(\bar{x})-\kappa\beta(\bar{x}+y)+2(\snr{2(\bar{x}+y)}^{\eta}-1)\big)K_{tq}(\bar{x},y) \dy.
\end{eqnarray}
Since $p\le 2$ and $q\ge2$, we can use \eqref{25}, \eqref{26} and Lemma \ref{lem_singular} to get
\begin{eqnarray*}
&& \big|u(\bar{x})+\kappa\beta(\bar{x})-u(\bar{x}+y)-\kappa\beta(\bar{x}+y)\big|^{p-2}\big(u(\bar{x})+\kappa\beta(\bar{x})-u(\bar{x}+y)-\kappa\beta(\bar{x}+y)\big)\nonumber \\*
&& \qquad\qquad\qquad \qquad \le\ 2^{q-2}\snr{u(\bar{x})-u(\bar{x}+y)}^{p-2}(u(\bar{x})-u(\bar{x}+y)) \\*
&&\qquad\qquad \qquad \qquad \quad \ +\ (6^{q-1}+2^{2q-3})\kappa^{p-1}\snr{\beta(\bar{x})-\beta(\bar{x}+y)}^{p-1} \nonumber
\end{eqnarray*}
and
\begin{eqnarray}\label{28}
&& \big|u(\bar{x})+\kappa\beta(\bar{x})-u(\bar{x}+y)-\kappa\beta(\bar{x}+y)+2\big(\snr{2(\bar{x}+y)}^{\eta}-1\big)\big|^{p-2}\nonumber \\*
&&\qquad \qquad\qquad\qquad\qquad \times\big(u(\bar{x})+\kappa\beta(\bar{x})-u(\bar{x}+y)-\kappa\beta(\bar{x}+y)+2\big(\snr{2(\bar{x}+y)}^{\eta}-1\big)\big)\nonumber\\*
&&\qquad\qquad\qquad \le\ 2^{q-2}\snr{u(\bar{x})-u(\bar{x}+y)}^{p-2}(u(\bar{x})-u(\bar{x}+y))\\*
&&\qquad\qquad\qquad \quad \ +\ (6^{q-1}+2^{2q-3})\big|{\kappa\beta(\bar{x})-\kappa\beta(\bar{x}+y)+2\big(\snr{2(\bar{x}+y)}^{\eta}-1\big)}\big|^{p-1}.\nonumber
\end{eqnarray}
Finally, we can conclude that
\begin{flalign}
I_{1}\stackrel{\eqref{22},\eqref{26}}{\le}&2^{q-2} \int_{\bar{x}+y\in B_{1}}\snr{u(\bar{x})-u(\bar{x}+y)}^{p-2}(u(\bar{x})-u(\bar{x}+y))K_{sp}(\bar{x},y) \dy\nonumber\\*
&+2^{q-2} \int_{\bar{x}+y\in B_{1}}\hat{c}a(\bar{x},y)\snr{u(\bar{x})-u(\bar{x}+y)}^{q-2}(u(\bar{x})-u(\bar{x}+y))K_{tq}(\bar{x},y) \dy\nonumber\\*
&+(6^{q-1}+2^{2q-3})\kappa^{p-1} \int_{\bar{x}+y\in B_{1}}\snr{\beta(\bar{x})-\beta(\bar{x}+y)}^{p-1}K_{sp}(\bar{x},y) \dy\nonumber\\*[-1.5ex]
&+2^{q-2}\kappa^{q-1} \int_{\bar{x}+y\in B_{1}}\hat{c}a(\bar{x},y)\snr{\beta(\bar{x})-\beta(\bar{x}+y)}^{q-2}(\beta(\bar{x})-\beta(\bar{x}+y))K_{tq}(\bar{x},y) \dy\label{29}
\end{flalign}
and
\begin{eqnarray}
I_{2}&\stackrel{\eqref{23},\eqref{28}}{\le}&\!\! 2^{q-2} \int_{\bar{x}+y\not \in B_{1}}\snr{u(\bar{x})-u(\bar{x}+y)}^{p-2}(u(\bar{x})-u(\bar{x}+y))K_{sp}(\bar{x},y) \dy\nonumber\\*
&&+\ 2^{q-2}\int_{\bar{x}+y\not \in B_{1}}\hat{c}a(\bar{x},y)\snr{u(\bar{x})-u(\bar{x}+y)}^{q-2}(u(\bar{x})-u(\bar{x}+y))K_{tq}(\bar{x},y) \dy\nonumber \\*
&&+\ (6^{q-1}+2^{2q-3})\int_{\bar{x}+y\not \in B_{1}}\big|{\kappa\beta(\bar{x})-\kappa\beta(\bar{x}+y)+2\big(\snr{2(\bar{x}+y)}^{\eta}-1\big)}\big|^{p-1}K_{sp}(\bar{x},y) \dy\nonumber \\*
&&+2^{q-2}\int_{\bar{x}+y\not \in B_{1}}\hat{c}a(\bar{x},y)\big|{\kappa \beta(\bar{x})-\kappa\beta(\bar{x}+y)+2\big(\snr{2(\bar{x}+y)}^{\eta}-1\big)}\big|^{q-2}\nonumber \\*
&&\qquad\qquad\qquad\ \ \times \big(\kappa \beta(\bar{x})-\kappa\beta(\bar{x}+y)+2\big(\snr{2(\bar{x}+y)}^{\eta}-1\big)\big)K_{tq}(\bar{x},y) \dy,\label{30}
\end{eqnarray}
so that
\begin{eqnarray}
&& \hspace{-2mm}\mathcal{L}(u+\kappa\beta)(\bar{x})  \nonumber\\*
&&\quad  \stackrel{\eqref{29},\eqref{30}}{\le}\!\!\!\!\!  2^{q-2}\mathcal{L}u(\bar{x})+(6^{q-1}+2^{2q-3})\kappa^{p-1} \int_{\bar{x}+y\in B_{1}}\snr{\beta(\bar{x})-\beta(\bar{x}+y)}^{p-1}K_{sp}(\bar{x},y) \dy\nonumber\\*
&&\qquad\qquad \ \  +\ 2^{q-2}\kappa^{q-1} \int_{\bar{x}+y\in B_{1}}\hat{c}a(\bar{x},y)\snr{\beta(\bar{x})-\beta(\bar{x}+y)}^{q-2}(\beta(\bar{x})-\beta(\bar{x}+y))K_{tq}(\bar{x},y) \dy\nonumber \\*
&&\qquad\qquad \ \  +\ (6^{q-1}+2^{2q-3})\int_{\bar{x}+y\not \in B_{1}}\big|{\kappa\beta(\bar{x})-\kappa\beta(\bar{x}+y)+2\big(\snr{2(\bar{x}+y)}^{\eta}-1\big)}\big|^{p-1}K_{sp}(\bar{x},y) \dy\nonumber \\*
&&\qquad\qquad \ \  +\ 2^{q-2}\int_{\bar{x}+y\not \in B_{1}}\big(\hat{c}a(\bar{x},y)\big|{\kappa \beta(\bar{x})-\kappa\beta(\bar{x}+y)+2\big(\snr{2(\bar{x}+y)}^{\eta}-1\big)}\big|^{q-2}\nonumber \\*
&&\hspace{4.5cm}\times \big(\kappa \beta(\bar{x})-\kappa\beta(\bar{x}+y)+2\big(\snr{2(\bar{x}+y)}^{\eta}-1\big)\big)K_{tq}(\bar{x},y) \big)\dy\nonumber \\[1ex]
&& \qquad \stackrel{\eqref{16}}{\le} \ \,  2^{q-2}\sigma +
 (6^{q-1}+2^{2q-3})\kappa^{p-1} \int_{\bar{x}+y\in B_{1}}\snr{\beta(\bar{x})-\beta(\bar{x}+y)}^{p-1}K_{sp}(\bar{x},y) \dy\nonumber\\*
&&\qquad\qquad \ \  +\ 2^{q-2}\kappa^{q-1} \int_{\bar{x}+y\in B_{1}}\hat{c}a(\bar{x},y)\snr{\beta(\bar{x})-\beta(\bar{x}+y)}^{q-1}K_{tq}(\bar{x},y) \dy\nonumber \\*
&&\qquad\qquad \ \  +\ (6^{q-1}+2^{2q-3})\int_{\bar{x}+y\not \in B_{1}}\big|{\kappa\beta(\bar{x})-\kappa\beta(\bar{x}+y)+2\big(\snr{2(\bar{x}+y)}^{\eta}-1\big)}\big|^{p-1}K_{sp}(\bar{x},y) \dy\nonumber \\*[-1.5ex]
&& \label{31}\\*[-1.5ex]
&&\qquad\qquad \ \  + \ 2^{q-2}\int_{\bar{x}+y\not \in B_{1}}\hat{c}a(\bar{x},y)\big|{\kappa \beta(\bar{x})-\kappa\beta(\bar{x}+y)+2\big(\snr{2(\bar{x}+y)}^{\eta}-1\big)}\big|^{q-1}K_{tq}(\bar{x},y) \dy.\nonumber
\end{eqnarray}
Recalling the definition of $\sigma$ and combining \eqref{31}, \eqref{14}, \eqref{P11}, we obtain
\begin{eqnarray*}
\frac{\varepsilon}{\Lambda 2^{n+sp+q-1}}
\!\! &\le & \!\! 2^{q-1}(2^{q-2}+\hat{c}M)\int_{y \not \in B_{1/4}}(\snr{8y}^{\eta}-1)^{p-1}K_{sp}(\bar{x},y)+(\snr{8y}^{\eta}-1)^{q-1}K_{tq}(\bar{x},y) \ \dy\\*
&&\!\!+\, (6^{q-1}+2^{2q-3})\kappa^{p-1} \int_{\bar{x}+y\in B_{1}}\snr{\beta(\bar{x})-\beta(\bar{x}+y)}^{p-1}K_{sp}(\bar{x},y) \dy\\*
&&\!\!+\,  2^{q-2}\kappa^{q-1} \int_{\bar{x}+y\in B_{1}}\hat{c}a(\bar{x},y)\snr{\beta(\bar{x})-\beta(\bar{x}+y)}^{q-1}K_{tq}(\bar{x},y) \dy\\*
&&\!\!+\, (6^{q-1}+2^{2q-3})\int_{\bar{x}+y\not \in B_{1}}\snr{\kappa\beta(\bar{x})-\kappa\beta(\bar{x}+y)+2(\snr{2(\bar{x}+y)}^{\eta}-1)}^{p-1}K_{sp}(\bar{x},y) \dy\nonumber\\*
&&\!\!+\, 2^{q-2}\int_{\bar{x}+y\not \in B_{1}}\hat{c}a(\bar{x},y)\snr{\kappa \beta(\bar{x})-\kappa\beta(\bar{x}+y)+2(\snr{2(\bar{x}+y)}^{\eta}-1)}^{q-1}K_{tq}(\bar{x},y) \dy\nonumber \\
&\le &\!\! \frac{\varepsilon}{\Lambda 2^{n+sp+q}},
\end{eqnarray*}
which again is a contradiction.\\\\
\emph{Case 3: $2\ge q>1/(1-t)$ and $ 2\ge p>{1}/{(1-{s})}$.} In this case, we first use Lemma \ref{lem_singular} to get, for $r=\{p,q\}$,
\begin{eqnarray*}
&& \left |u(\bar{x})+\kappa\beta(\bar{x})-u(\bar{x}+y)-\kappa\beta(\bar{x}+y)\right|^{r-2}\big(u(\bar{x})+\kappa\beta(\bar{x})-u(\bar{x}+y)-\kappa\beta(\bar{x}+y)\big)\nonumber \\*
&&\qquad \le \ \snr{u(\bar{x})-u(\bar{x}+y)}^{r-2}(u(\bar{x})-u(\bar{x}+y))+(3^{q-1}+2^{q-1})\kappa^{r-1}\snr{\beta(\bar{x})-\beta(\bar{x}+y)}^{r-1}
\end{eqnarray*}
and so,
\begin{eqnarray*}
&& \left |u(\bar{x})+\kappa\beta(\bar{x})-u(\bar{x}+y)-\kappa\beta(\bar{x}+y)+2(\snr{2(x+y)}^{\eta}-1)\right|^{r-2}\nonumber \\*
&&\qquad\qquad\qquad\qquad\qquad\times(u(\bar{x})+\kappa\beta(\bar{x})-u(\bar{x}+y)-\kappa\beta(\bar{x}+y)+2(\snr{2(\bar{x}+y)}^{\eta}-1))\nonumber\\
&&\qquad\qquad\qquad\qquad\le\  \snr{u(\bar{x})-u(\bar{x}+y)}^{r-2}(u(\bar{x})-u(\bar{x}+y))\\
&&\qquad\qquad\qquad\qquad\quad \ +\ (3^{q-1}+2^{q-1})\snr{\kappa\beta(\bar{x})-\kappa\beta(\bar{x}+y)+2(\snr{2(\bar{x}+y)}^{\eta}-1)}^{r-1}.\nonumber
\end{eqnarray*}
Thus, by combining the two estimates above, we get
\begin{flalign}\label{34}
I_{1}\ \le &\  \int_{\bar{x}+y\in B_{1}}\snr{u(\bar{x})-u(\bar{x}+y)}^{p-2}(u(\bar{x})-u(\bar{x}+y))K_{sp}(\bar{x},y) \dy\nonumber\\*
&\ + \int_{\bar{x}+y\in B_{1}}\hat{c}a(\bar{x},y)\snr{u(\bar{x})-u(\bar{x}+y)}^{q-2}(u(\bar{x})-u(\bar{x}+y))K_{tq}(\bar{x},y) \dy\nonumber \\*
&\ +(3^{q-1}+2^{q-1})\kappa^{p-1} \int_{\bar{x}+y\in B_{1}}\snr{\beta(\bar{x})-\beta(\bar{x}+y)}^{p-1}K_{sp}(\bar{x},y) \dy \nonumber \\*
&\ +(3^{q-1}+2^{q-1})\kappa^{q-1} \int_{\bar{x}+y\in B_{1}}\hat{c}a(\bar{x},y)\snr{\beta(\bar{x})-\beta(\bar{x}+y)}^{q-1}K_{tq}(\bar{x},y) \dy 
\end{flalign}
and
\begin{flalign}\label{35}
I_{2}\ \le&\ \int_{\bar{x}+y\not \in B_{1}}\snr{u(\bar{x})-u(\bar{x}+y)}^{p-2}(u(\bar{x})-u(\bar{x}+y))K_{sp}(\bar{x},y) \dy\nonumber\\*
&\ +\int_{\bar{x}+y\not\in B_{1}}\hat{c}a(\bar{x},y)\snr{u(\bar{x})-u(\bar{x}+y)}^{q-2}(u(\bar{x})-u(\bar{x}+y))K_{tq}(\bar{x},y) \dy\nonumber \\*
&\ +(3^{q-1}+2^{q-1})\int_{\bar{x}+y\not \in B_{1}}\snr{\kappa\beta(\bar{x})-\kappa\beta(\bar{x}+y)+2(\snr{2(\bar{x}+y)}^{\eta}-1)}^{p-1}K_{sp}(\bar{x},y) \dy \nonumber \\*[-1.5ex]
&\ +(3^{q-1}+2^{q-1})\int_{\bar{x}+y\not \in B_{1}}\hat{c}a(\bar{x},y)\snr{\kappa\beta(\bar{x})-\kappa\beta(\bar{x}+y)+2(\snr{2(\bar{x}+y)}^{\eta}-1)}^{q-1}K_{tq}(\bar{x},y) \dy, 
\end{flalign}
therefore
\begin{eqnarray*}
&&\hspace{-7mm}\mathcal{L}(u+\kappa\beta)(\bar{x})\nonumber \\*
&& \hspace{-4mm} \stackrel{\eqref{34},\eqref{35}}{\le}\mathcal{L}u(\bar{x})+(3^{q-1}+2^{q-1})\kappa^{p-1} \int_{\bar{x}+y\in B_{1}}\snr{\beta(\bar{x})-\beta(\bar{x}+y)}^{p-1}K_{sp}(\bar{x},y) \dy \nonumber \\*
&&\qquad +\ (3^{q-1}+2^{q-1})\kappa^{q-1} \int_{\bar{x}+y\in B_{1}}\hat{c}a(\bar{x},y)\snr{\beta(\bar{x})-\beta(\bar{x}+y)}^{q-1}K_{tq}(\bar{x},y) \dy \\*
&&\qquad +\ (3^{q-1}+2^{q-1})\int_{\bar{x}+y\not \in B_{1}}\snr{\kappa\beta(\bar{x})-\kappa\beta(\bar{x}+y)+2(\snr{2(\bar{x}+y)}^{\eta}-1)}^{p-1}K_{sp}(\bar{x},y) \dy \nonumber \\*
&&\qquad +\ (3^{q-1}+2^{q-1})\int_{\bar{x}+y\not \in B_{1}}\hat{c}a(\bar{x},y)\snr{\kappa\beta(\bar{x})-\kappa\beta(\bar{x}+y)+2(\snr{2(\bar{x}+y)}^{\eta}-1)}^{q-1}K_{tq}(\bar{x},y) \dy\\*
&&\stackleq{16} \sigma + (3^{q-1}+2^{q-1})\kappa^{p-1} \int_{\bar{x}+y\in B_{1}}\snr{\beta(\bar{x})-\beta(\bar{x}+y)}^{p-1}K_{sp}(\bar{x},y) \dy \nonumber \\*
&&\qquad +\ (3^{q-1}+2^{q-1})\kappa^{q-1} \int_{\bar{x}+y\in B_{1}}\hat{c}a(\bar{x},y)\snr{\beta(\bar{x})-\beta(\bar{x}+y)}^{q-1}K_{tq}(\bar{x},y) \dy \\*
&&\qquad +\ (3^{q-1}+2^{q-1})\int_{\bar{x}+y\not \in B_{1}}\snr{\kappa\beta(\bar{x})-\kappa\beta(\bar{x}+y)+2(\snr{2(\bar{x}+y)}^{\eta}-1)}^{p-1}K_{sp}(\bar{x},y) \dy \nonumber \\*
&&\qquad +\ (3^{q-1}+2^{q-1})\int_{\bar{x}+y\not \in B_{1}}\hat{c}a(\bar{x},y)\big|{\kappa\beta(\bar{x})-\kappa\beta(\bar{x}+y)+2(\snr{2(\bar{x}+y)}^{\eta}-1)\big|}^{q-1}K_{tq}(\bar{x},y) \dy.
\end{eqnarray*}
Merging the content of the above display with \eqref{14} and using \eqref{P12}, we reach the following contradiction:
\begin{eqnarray*}
\frac{\varepsilon}{\Lambda 2^{n+sp+q-1}}
\!\!&\le& \!\!2^{q-1}(1+\hat{c}M)\int_{y\not \in B_{1/4}}(\snr{8y}^{\eta}-1)^{p-1}K_{sp}(\bar{x},y)+(\snr{8y}^{\eta}-1)^{q-1}K_{tq}(\bar{x},y) \ \dy\\*
&&\!\! +\,(3^{q-1}+2^{q-1})\kappa^{p-1} \int_{\bar{x}+y\in B_{1}}\snr{\beta(\bar{x})-\beta(\bar{x}+y)}^{p-1}K_{sp}(\bar{x},y) \dy\nonumber \\*
&&\!\! +\, (3^{q-1}+2^{q-1})\kappa^{q-1} \int_{\bar{x}+y\in B_{1}}\hat{c}a(\bar{x},y)\snr{\beta(\bar{x})-\beta(\bar{x}+y)}^{q-1}K_{tq}(\bar{x},y) \dy\\*
&&\!\! +\,(3^{q-1}+2^{q-1})\int_{\bar{x}+y\not \in B_{1}}\snr{\kappa\beta(\bar{x})-\kappa\beta(\bar{x}+y)+2(\snr{2(\bar{x}+y)}^{\eta}-1)}^{p-1}K_{sp}(\bar{x},y) \dy\\*
&&\!\! +\,(3^{q-1}+2^{q-1})\int_{\bar{x}+y\not \in B_{1}}\hat{c}a(\bar{x},y)\big|{\kappa\beta(\bar{x})-\kappa\beta(\bar{x}+y)+2(\snr{2(\bar{x}+y)}^{\eta}-1)\big|}^{q-1}\\*
&& \hspace{3.9cm} \times K_{tq}(\bar{x},y) \dy\\
&\le &\!\!\frac{\varepsilon}{\Lambda 2^{n+sp+q}}.
\end{eqnarray*}
\end{proof}

\vspace{2mm}
\section{H\"older continuity}
This section is devoted to the proof of the H\"older continuity of the solutions, namely Theorem~\ref{teo_holder}. As in the local framework, at this stage, an iteration lemma will be the keypoint of the proof. However, as before, we have to handle the nonlocality of the involved operators together with the modulating function $a(\cdot,\cdot)$, and thus a certain care is required. Also, the scaling behavior of the nonlocal double phase equations has to be taking into account, precisely when dealing with the datum $f$ in order to apply the result in Lemma~\ref{lem_positivity}. Here, as expected, the assumptions on the summability exponents~\eqref{pq}-\eqref{hp_spq}
will be in force, in clear accordance with the observation in~Remark~\ref{rem_lavri}; for further clarification in this respect we refer to forthcoming Sections~\ref{sec_clarification}-\ref{sec_scala}.
On the whole, in the proof below, all the estimates proven in the previous section will take part.

\subsection{Proof of Theorem \ref{teo_holder}} 
Let $u$ be a solution of problem \eqref{problema}, with $\mathcal{L}$ as in \eqref{L}, under assumptions \eqref{K}--\eqref{energiafinita}. For a small parameter $\sigma$ chosen as in \eqref{sigma}, we first rescale $u$ by defining the map $\tilde{u}:=\lambda u$, where
\begin{equation}\label{lam0}
\lambda:=\frac{1}{2}\left(\frac{1}{\|u\|_{L^\infty}+(\|f\|_{L^\infty(B_2)}/\sigma)^{1/(p-1)}}\right).
\end{equation}
From \eqref{lam0}, it is easy to see that
\begin{flalign*}
\osc_{\mathds{R}^{n}}u\le 1
\end{flalign*}
and, according to the scaling properties of $\mathcal{L}$ discussed in the Appendix at Page~\pageref{appendix}, the function~$\tilde{u}$ satisfies 
\begin{flalign*}
\tilde{\mathcal{L}}\tilde{u}=\tilde{f} \ \ \mbox{in} \ \ B_{2},
\end{flalign*}
where $\tilde{\mathcal{L}}$ is an operator of the type of those considered in \eqref{Lc}, precisely
\begin{flalign*}
\tilde{\mathcal{L}}v(x):=&\int_{\mathds{R}^{n}}\snr{v(x)-v(x+y)}^{p-2}(v(x)-v(x+y))K_{sp}(x,y)  \dy\\*
&+\int_{\mathds{R}^{n}}\lambda^{p-q}a(x,y)\snr{v(x)-v(x+y)}^{q-2}(v(x)-v(x+y))K_{tq}(x,y)  \dy
\end{flalign*}
and
\begin{flalign*}
\tilde{f}(x):=\lambda^{p-1}f(x),
\end{flalign*}
for $x \in B_{2}$. Again, the definition in \eqref{lam0} assures that
\[
\nr{\tilde{f}}_{L^{\infty}(B_{2})}\le \frac{\sigma}{2^{p-1}}.
\]
Now, fix $x_{0}\in B_{1}$. For $i \in \mathbb{Z}$, by induction, we are going to find $b_{i}, c_{i}$ so that
\begin{flalign}\label{osc0}
b_{i}\le \tilde{u}(x)\le c_{i} \ \ \mbox{in} \ \ B_{2^{-i}}(x_{0}) \quad \mbox{and} \quad \snr{c_{i}-b_{i}}\le 2^{-i\gamma},
\end{flalign}
where $\gamma \in (0,1)$ satisfies
\begin{flalign}\label{gamma}
\frac{2-\theta}{2}\le 2^{-\gamma}\quad \mbox{and}\quad \gamma\le \eta<\min\left\{\frac{sp}{p-1},\frac{tq}{q-1}\right\}. 
\end{flalign}
The values of $\theta=\theta(\texttt{data})$ and $\eta=\eta(\texttt{data})$ are those coming from Proposition \ref{prop_cumbersome} and Lemma \ref{lem_positivity} corresponding to $\varepsilon=\snr{B_{1}}/2$. Clearly, \eqref{osc0} is satisfied for $i\le 0$ by the choice $b_{i}:=\inf_{x \in \mathds{R}^{n}}u(x)$ and $c_{i}:=b_{i}+1$. Let us assume that \eqref{osc0} holds for $i \le j$, by induction we will construct $b_{j+1}$ and $c_{j+1}$. Consider the function $\bar{u}$ defined as follows:
\begin{flalign*}
\bar{u}(x):=2^{\gamma j+1}\left(\tilde{u}(2^{-j}x+x_{0})-m\right), \ \ \mbox{for} \ \ x \in B_{1},
\end{flalign*}
where $m:=(b_{j}+c_{j})/2$. By \eqref{osc0} and \eqref{lam0}, it is evident that $\nr{\bar{u}}_{L^{\infty}(B_{1})}\le 1$ and, invoking again the behavior of the operator $\mathcal{L}$ under blow up, we obtain that $\bar{u}$ solves
\begin{flalign*}
\bar{\mathcal{L}}\bar{u}=\bar{f} \ \ \mbox{in} \ \ B_{1},
\end{flalign*}
where
\begin{flalign}\label{op0}
\bar{\mathcal{L}}v(x):=&\int_{\mathds{R}^{n}}\snr{v(x)-v(x+y)}^{p-2}(v(x)-v(x+y))\bar{K}_{sp}(x,y) \ \dy\nonumber \\
&+\int_{\mathds{R}^{n}}\bar{a}(x,y)\snr{v(x)-v(x+y)}^{q-2}(v(x)-v(x+y))\bar{K}_{tq}(x,y) \ \dy
\end{flalign}
and
\begin{flalign*}
\bar{f}(x):=\lambda^{p-1}2^{(\gamma j+1)(p-1)-jsp}f(2^{-j}x+x_{0}).
\end{flalign*}
From \eqref{lam0} and $\eqref{gamma}_{2}$ it follows that
\begin{flalign}\label{f2}
\nr{\bar{f}}_{L^{\infty}(B_{1})}\le \sigma.
\end{flalign}
Notice also that in~\eqref{op0}, the kernels $\bar{K}_{sp}(\cdot,\cdot)$ and $\bar{K}_{tq}(\cdot,\cdot)$ are given by
\begin{flalign}\label{K2}
\begin{cases}
\ \bar{K}_{sp}(x,y):=2^{-j(n+sp)}K_{sp}(2^{-j}x+x_{0},2^{-j}y)\\
\ \bar{K}_{tq}(x,y):=2^{-j(n+tq)}K_{tq}(2^{-j}x+x_{0},2^{-j}y)
\end{cases},
\end{flalign}
so that \eqref{K} is satisfied. Moreover, the new modulating coefficient is given by
\begin{flalign}\label{a0}
\bar{a}(x,y):=\lambda^{p-q}2^{-j(sp-tq)+(\gamma j+1)(p-q)}a(2^{-j}x+x_{0},2^{-j}y).
\end{flalign}
As to verify \eqref{a} for $\bar{a}(\cdot,\cdot)$, we need to analyze the behavior of $\lambda$, which is ultimately influenced only by $\sigma$. Therefore, using \eqref{sigma} we obtain that
\begin{eqnarray}\label{sigma0}
\sigma^{-}\!\!&:=&\!\!\frac{\omega_{n}2^{q-1+2sp}(2^{\eta}-1)}{sp}\nonumber\\*
&\le&\!\! \sigma \nonumber\\*
& \le&\!\! \omega_{n}2^{q+3\eta(q-1)}\max\left\{\frac{4^{sp-\eta(p-1)}}{sp-\eta(p-1)},\frac{4^{tq}-\eta(q-1)}{tq-\eta(q-1)}\right\}\,=:\,\sigma^{+},
\end{eqnarray}
where $\eta$ is by now fixed. Coupling \eqref{a0}, \eqref{a}, \eqref{lam0}, \eqref{hp_spq}, \eqref{sigma0}, and recalling that $\sigma <<1$, we can conclude that
\begin{eqnarray}\label{a0cristianaaa}
\nr{\bar{a}}_{L^{\infty}(B_{1})}\!\!&\le&\!\! 2^{q-p}M\left(\nr{u}_{L^{\infty}(\mathds{R}^{n})}+\left(\frac{\nr{f}_{L^{\infty}(B_{2})}}{\sigma}\right)^{\frac{1}{p-1}}\right)^{q-p}\nonumber \\
&\le& \!\! c(\sigma^{-})^{\frac{p-q}{p-1}}
\,=:\,\overline{M}(\texttt{data}),
\end{eqnarray}
for a constant $c$ depending only on $p,q,M,\nr{u}_{L^{\infty}(\mathds{R}^{n})},\nr{f}_{L^{\infty}(B_{2})}$, so that  \eqref{a} is satisfied as well. 

Finally, we notice that, given any $y \in \mathds{R}^{n}\setminus B_{1}$, there is an integer $\ell\ge 0$ such that $2^{\ell}\le \snr{y}\le 2^{\ell+1}$, so we have
\begin{eqnarray*}
\bar{u}(y)\!\!&=&\!\! 2^{\gamma j+1}\left(\tilde{u}(2^{-j}y+x_{0})-m\right)
\, \le\, 2^{\gamma j+1}\left(c_{j-\ell-1}-m\right)\\*
&\le &\!\! 2^{\gamma j+1}\left(c_{j-\ell-1}+b_{j}-b_{j-\ell-1}-m\right)\\*
& \le &\!\! 2^{\gamma j+1}\left(2^{-\gamma(j-\ell-1)}+b_{j}-m\right)
\, \le\, 2^{\gamma j+1}\left(2^{-\gamma(j-\ell-1)}-2^{-(\gamma j+1)}\right)\\*
&\le &\!\!2^{1+\gamma(\ell+1)}-1
\, \le\, 2\snr{2y}^{\gamma}-1
\, \le\, 2\snr{2y}^{\eta}-1,
\end{eqnarray*}
where we also used that the inductive step \eqref{osc0} holds for $i\le j$ and $\eqref{gamma}_{2}$. It is not restrictive now to suppose that $\snr{\{x \in B_{1}\colon \bar{u}(x)\le 0\}}\ge \snr{B_{1}}/2$, (otherwise we can apply the same procedure to $-\bar{u}$); collecting estimates \eqref{f2}, \eqref{K2} and \eqref{a0cristianaaa} we see that $\bar{\mathcal{L}}$, $\bar{u}$ and $\bar{f}$ satisfy all the assumptions of~Lemma~\ref{lem_positivity} corresponding to $\varepsilon=\snr{B_{1}}/2$. As a consequence, $\bar{u}(x)\le 1-\theta$ in $B_{1/2}$, for some $\theta=\theta(\texttt{data})$. Scaling back to $\tilde{u}$, this renders
\begin{eqnarray*}
\tilde{u}(x)\!\!&\le&\!\! 2^{-(\gamma j+1)}(1-\theta)+m=2^{-(\gamma j+1)}(1-\theta)+\frac{b_{j}+c_{j}}{2}\\*
&\le & \!\! b_{j}+2^{-(1+\gamma j)}(1-\theta)+2^{-(1+\gamma j)}\\*[1ex]
&\le & \!\!  b_{j}+2^{-\gamma(j+1)},
\end{eqnarray*}
where we used the definition of $m$, $\eqref{osc0}_{2}$ and $\eqref{gamma}_{1}$. Hence, the choices $b_{j+1}=b_{j}$ and $c_{j+1}=b_{j}+2^{-\gamma(j+1)}$ fix \eqref{osc0} for step $i=j+1$. Now, for any $\rr\in (0,1)$ we can find an integer~$j\ge 0$ so that $2^{-j-1}\le\rr\le 2^{-j}$ and 
\begin{eqnarray*}
\osc_{B_{\rr}(x_{0})}\tilde u\!\!&=&\!\!\sup_{x\in B_{\rr}(x_{0})}\tilde{u}(x)-\inf_{x\in B_{\rr}(x_{0})}\tilde{u}(x)\\*
&\le&\!\! b_{j-1}+2^{-\gamma j}-b_{j-1}
\ \le\  2^{\gamma}2^{-\gamma(j+1)}
\ \le\ 2^{\gamma}\rr^{\gamma}.
\end{eqnarray*}
Scaling back to $u$ in the previous estimate and recalling \eqref{lam0} and \eqref{sigma0}, we have that
\begin{eqnarray*}
\osc_{B_{\rr}(x_{0})}u
\!\!&\le&\!\! \varrho^{\gamma}2^{\gamma+1}\left(\nr{u}_{L^{\infty}(\mathds{R}^{n})}+\left(\frac{\nr{f}_{L^{\infty}(B_{2})}}{\sigma^{-}}\right)^{\frac{1}{p-1}}\right)\\*
&\le&\!\! c(\texttt{data})\left(\nr{u}_{L^{\infty}(\mathds{R}^{n})}+\nr{f}_{L^{\infty}(B_{2})}^{\frac{1}{p-1}}\right)\varrho^{\gamma}.
\end{eqnarray*}
Here, of course, $\gamma=\gamma(\texttt{data})$. By standard covering, we can finally conclude that $u \in C^{0,\gamma}(B_{1})$, as desired.
\subsection{Further clarification}\label{sec_clarification}
In the proof of Theorem \ref{teo_holder}, a crucial role is played by the estimates in Proposition \ref{prop_cumbersome}. However, in the light of the scaling features of the operator $\mathcal{L}$, it is evident that, while reducing $\sigma$ by sending $\eta\to 0^{+}$ there is some competition among the terms appearing in estimates \eqref{P10}--\eqref{P12}. Adopting the same terminology of Proposition \ref{prop_cumbersome}, from \eqref{lam0} and \eqref{a0}, we see that
\begin{equation*} 
\hat{c}\,\le\, \tilde{\lambda}^{p-q} \ \ \mbox{and} \ \ \lambda^{p-q}\sim \sigma^{-\frac{q-p}{p-1}},
\end{equation*}
where the constants implicit in ``$\sim$'' depend only on $p,q,\nr{f}_{L^{\infty}(B_{2})}, \nr{u}_{L^{\infty}(\mathds{R}^{n})}$. Thus, recalling \eqref{P10}, we look at those terms depending on $\eta$ and we can estimate
\begin{eqnarray}\label{com1}
&&\hspace{-5mm}2^{2p+q-6}\int_{x+y\not \in B_{1}}\big|{\snr{2(x+y)}^{\eta}-1}\big|^{p-1}K_{sp}(x,y)  \dy\nonumber \\*
&&\qquad+(2+\hat{c}M)2^{q-1}\int_{y \not \in B_{1/4}}\big((\snr{8y}^{\eta}-1)^{p-1}K_{sp}(x,y)+(\snr{8y}^{\eta}-1)^{q-1}K_{tq}(x,y) \big) \dy\nonumber\\
&&\qquad \le \left(2^{2p-4}\Lambda+\hat{c}M\right)2^{q-1}\int_{y \not \in B_{1/4}}\big((\snr{8y}^{\eta}-1)^{p-1}K_{sp}(x,y)+(\snr{8y}^{\eta}-1)^{q-1}K_{tq}(x,y)\big)\dy\nonumber \\*[-0.5ex]
&& \\*
&&\qquad \le c\sigma^{1-\frac{q-p}{p-1}}, \nonumber
\end{eqnarray}
since $\snr{x}$ is supposed to be less than $3/4$ in the proofs of Proposition \ref{prop_cumbersome} and of Lemma~\ref{lem_positivity}. Here, $c=c(\texttt{data})$. Condition \eqref{hp_spq} yields that the exponent of the term in the right-hand side of \eqref{com1} is strictly positive, so we can find a sufficiently small $\eta_{1}>0$ so that $c\sigma^{1-\frac{q-p}{p-1}}\le {\varepsilon}/({5\Lambda 2^{n+sp+q+1}})$. Fixed $\eta_{1}$, we can complete the bound on those terms appearing on the left-hand side of \eqref{P10} as follows
\begin{flalign}\label{com2}
2^{q-2}\kappa_{1}^{p-1}&\int_{x+y\in B_{1}}\snr{\beta(x)-\beta(x+y)}^{p-1}K_{sp}(x,y) \ \dy\nonumber \\*
&+2^{q-2}\kappa_{1}^{q-1}\int_{x+y\in B_{1}}\hat{c}a(x,y)\snr{\beta(x)-\beta(x+y)}^{q-1}K_{tq}(x,y) \ \dy\nonumber \\*
&+2^{p+q-4}\kappa_{1}^{p-1}\int_{x+y\not \in B_{1}}\snr{\beta(x)-\beta(x,y)}^{p-1}K_{sp}(x,y) \ \dy\nonumber \\*
&+2^{2q-4}\kappa_{1}^{q-1}\int_{x+y\not \in B_{1}}\hat{c}a(x,y)\snr{\beta(x)-\beta(x+y)}^{q-1}K_{tq}(x,y) \ \dy\nonumber \\
\le & \ c\left(\kappa_{1}^{p-1}+\sigma^{-\frac{q-p}{p-1}}\kappa_{1}^{q-1}\right)
\, \le\, \frac{\varepsilon}{5\Lambda 2^{n+sp+q+1}},
\end{flalign}
provided that $\kappa_{1}\le \left(\frac{\sigma}{2c}\right)^{\frac{1}{p-1}}$, with $c=c(\texttt{data})$. Then, we can easily deduce inequality~\eqref{P10} by merging the contents of~\eqref{com1} and~\eqref{com2}. In a totally similar way we can manipulate the quantities appearing in~\eqref{P11} and in~\eqref{P12} as to verify those bounds and, at the same time, avoid any competition among the parameters involved.

Now, we let $\eta_{2}$,$\kappa_{2}$ and $\eta_{3}$,$\kappa_{3}$ be those values of~$\eta$ and of~$\kappa$ for which~\eqref{P11} and~\eqref{P12} hold respectively, and finally set $\eta:=\min\{\eta_{1},\eta_{2},\eta_{3}\}$ and $\kappa:=\min\{\kappa_{1},\kappa_{2},\kappa_{3}\}$. In this way, \eqref{P10}--\eqref{P12} are simultaneously satisfied. Notice that~$\eta=\eta(\texttt{data},\varepsilon)$ and~$\kappa=\kappa(\texttt{data},\varepsilon)$.

\vspace{3mm}
\section*{Appendix}\label{appendix}
We complete this paper by discussing some intrinsic characteristics of the class of operators under investigation. 
\subsection{Scaling properties of the nonlocal double phase equations}\label{sec_scala}
Let us analyze the structure of operator~$\mathcal{L}$ by investigating some general scaling properties. Under assumption~\eqref{energiafinita}, let $u\in L^{\infty}(\mathds{R}^{n})$ be a viscosity solution to problem~\eqref{problema} in the sense of Definition \ref{def_viscosity}. 
We rescale and blow~$u$ around a point~$x_{0}\in B_{1}$ as follows. For~$\lambda, \mu>0$ and~$x\in B_{1}$, we define the map~$u_{\mu,x_{0}}^{(\lambda)}(x):=\lambda u(\mu x+x_{0})$. Such a function satisfies 
\[
\hat{\mathcal{L}}u^{(\lambda)}_{\mu,x_{0}}(x):=\hat{f}(x) \ \ \mbox{in} \ \ B_{1},
\] 
where
\begin{eqnarray}\label{sc}
\hat{\mathcal{L}}v(x)\!\!&:=&\!\!\int_{\mathds{R}^{n}}\snr{v(x)-v(x+y)}^{p-2}(v(x)-v(x+y))\hat{K}_{sp}(x,y) \dy\nonumber \\
&&\!\!+\,\int_{\mathds{R}^{n}}\hat{a}(x,y)\snr{v(x)-v(x+y)}^{q-2}(v(x)-v(x+y))\hat{K}_{tq}(x,y)  \dy
\end{eqnarray}
and
\begin{flalign}\label{newf}
\hat{f}(x):=\lambda^{p-1}\mu^{sp}f(\mu x+x_{0}).
\end{flalign}
The modulating coefficient and the kernels appearing in \eqref{sc} are defined as
\begin{flalign}\label{newa}
\hat{a}(x,y):=\lambda^{p-q}\mu^{sp-tq}a(\mu x+x_{0},\mu y)
\end{flalign}
and
\begin{flalign}\label{newks}
\begin{cases}
\ \hat{K}_{sp}(x,y):=\mu^{n+sp}K_{sp}(\mu x+x_{0},\mu y)\\[0.8ex]
\ \hat{K}_{tq}(x,y):=\mu^{n+tq}K_{tq}(\mu x+x_{0},\mu y)
\end{cases},
\end{flalign}
respectively. From \eqref{newks}, it immediately follows that $\hat{K}_{sp}$ and $\hat{K}_{tq}$ satisfy \eqref{K}. Moreover, \eqref{newa} yields \eqref{a} via replacing $M$ with $\hat{M}:=\lambda^{p-q}\mu^{sp-tq}M$ and, by \eqref{newf} it follows that $\nr{\hat{f}}_{L^{\infty}(B_{1})}\le\lambda^{p-1}\mu^{sp-tq}\nr{f}_{L^{\infty}(B_{1})}$.
\subsection{Some useful inequalities}
We report some elementary algebraic inequalities whose proofs are essentially contained in the Appendix in~\cite{Lin16}; see in particular Lemma~3 and Lemma~4 there. We refer also to Section 2 in~\cite{KKP16} where similar inequalities do appear. All of them are very useful in estimating the $p$-fractional Sobolev seminorms, and they were needed in the whole paper. For the sake of completeness, here below we address the plain modifications in order to extend them to the fractional $(p,q)$-growth case.
\begin{lemma}\label{revL1}
Let $r\ge 2$, $r\in \{p,q\}$. Then
\begin{flalign*}
\left |\snr{a+b}^{r-2}(a+b)-\snr{a}^{r-2}a \right |\le (r-1)\snr{b}(\snr{a}+\snr{b})^{r-2},
\end{flalign*}
for all $a,b\in \mathds{R}$.
\end{lemma}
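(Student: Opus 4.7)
My plan is to view the map $\phi(x):=\snr{x}^{r-2}x$ as a $C^{1}$ function on $\mathds{R}$---and this is precisely where the hypothesis $r\ge 2$ enters, since it guarantees continuity of $\phi'$ at the origin---and then derive the inequality by writing the difference $\phi(a+b)-\phi(a)$ as the integral of its derivative along the segment joining $a$ to $a+b$.

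Concretely, I would first record that $\phi'(x)=(r-1)\snr{x}^{r-2}$, a continuous function on $\mathds{R}$ as soon as $r\ge 2$. Then I would apply the fundamental theorem of calculus to the one-parameter family $t\mapsto \phi(a+tb)$ on $[0,1]$, obtaining the representation
\[
\snr{a+b}^{r-2}(a+b)\,-\,\snr{a}^{r-2}a\ =\ (r-1)\,b\int_{0}^{1}\snr{a+tb}^{r-2}\,dt.
\]
Finally, I would take absolute values, pass them under the integral sign, and bound $\snr{a+tb}\le\snr{a}+t\snr{b}\le\snr{a}+\snr{b}$ via the triangle inequality for every $t\in[0,1]$. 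This collapses the remaining integral to $(\snr{a}+\snr{b})^{r-2}$ and produces exactly the desired factor $(r-1)\snr{b}(\snr{a}+\snr{b})^{r-2}$.

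The only subtle point is the possible vanishing of $a+tb$ for some $t\in[0,1]$, which in principle could invalidate differentiation of $\phi$ pointwise. However, since $r\ge 2$ forces $\snr{x}^{r-2}$ to be continuous on all of $\mathds{R}$, the integrand above is bounded and continuous on $[0,1]$, so the fundamental theorem applies with no regularisation required; the assumption $r\ge 2$ is therefore used at exactly one place, and no further obstacle arises. This is why I do not expect any real difficulty in carrying out the argument in full detail.
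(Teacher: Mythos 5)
Your proof is correct. The paper does not actually give a proof of this lemma---it simply defers to Lemma~2 of Lindgren's paper \cite{Lin16}---so there is no in-text argument to compare against, but the fundamental-theorem-of-calculus argument you present (writing $\phi(x)=\snr{x}^{r-2}x$, noting $\phi\in C^{1}(\mathds{R})$ with $\phi'(x)=(r-1)\snr{x}^{r-2}$ since $r\geq 2$, integrating $\phi'$ along the segment from $a$ to $a+b$, and then bounding $\snr{a+tb}\leq\snr{a}+\snr{b}$ uniformly for $t\in[0,1]$) is precisely the standard proof of this kind of monotone-vector-field inequality, and is essentially identical to what appears in the cited reference. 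Your remark about the potential vanishing of $a+tb$ is well taken: the hypothesis $r\geq 2$ is exactly what makes $\phi'$ continuous across the origin, so the FTC applies with no regularization and the argument closes cleanly.
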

\begin{proof}
The proof is contained in \cite[Lemma 2]{Lin16}.
\end{proof}
\begin{lemma}\label{lem_superlinear}
Let $q\ge p\ge 2$ and let $a,b\in \mathds{R}$ such that $a+b\ge 0$. Then, for $r\in\{p,q\}$ there holds 
\begin{flalign*}
\snr{a+b}^{r-2}(a+b)
\, \le\, 2^{q-2}(\snr{a}^{r-2}a+\snr{b}^{r-2}b).
\end{flalign*}
\end{lemma}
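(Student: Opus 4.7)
My plan is to prove the inequality by reducing the claim $|a+b|^{r-2}(a+b)\le 2^{q-2}(|a|^{r-2}a+|b|^{r-2}b)$ to the simpler form $(a+b)^{r-1}\le 2^{q-2}(|a|^{r-2}a+|b|^{r-2}b)$, exploiting the hypothesis $a+b\ge 0$ which removes the absolute value on the left. The first observation I would record is that the right-hand side is itself nonnegative: indeed, if $a,b\ge 0$ this is obvious, while if one of them (say $b$) is negative, the condition $a+b\ge 0$ forces $a\ge -b=|b|\ge 0$, and then $|a|^{r-2}a+|b|^{r-2}b=a^{r-1}-|b|^{r-1}\ge 0$ since $t\mapsto t^{r-1}$ is monotone on $[0,\infty)$ for $r\ge 2$. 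This makes the inequality meaningful and lets me treat the two sides as nonnegative quantities.

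Next, I would split into two cases according to whether $a$ and $b$ have the same sign. In the case $a,b\ge 0$, the inequality reduces to $(a+b)^{r-1}\le 2^{r-2}(a^{r-1}+b^{r-1})$, which is the standard consequence of convexity of $t\mapsto t^{r-1}$ on $[0,\infty)$ for $r\ge 2$ (Jensen applied at the midpoint). Since $r\in\{p,q\}$ and $q\ge p\ge 2$, the bound $2^{r-2}\le 2^{q-2}$ upgrades this to the desired constant uniformly in $r$.

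In the mixed-sign case, by symmetry I may assume $a\ge 0>b$, set $c:=-b>0$, and note $a\ge c$ from $a+b\ge 0$. The claim then becomes $(a-c)^{r-1}\le 2^{q-2}(a^{r-1}-c^{r-1})$. Here I would use the telescoping factorization
\[
a^{r-1}-c^{r-1}=(a-c)\sum_{j=0}^{r-2}a^{r-2-j}c^{j},
\]
so that after dividing by $a-c\ge 0$ the inequality reduces to $(a-c)^{r-2}\le 2^{q-2}\sum_{j=0}^{r-2}a^{r-2-j}c^j$. Since $0\le a-c\le a$, the left-hand side is bounded by $a^{r-2}$, while the sum on the right is at least $a^{r-2}$ (its $j=0$ term), and $2^{q-2}\ge 1$ because $q\ge 2$. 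This closes the case.

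I do not anticipate any serious obstacle: the result is a purely elementary algebraic inequality, and the only subtlety is checking that the constant $2^{q-2}$ is indeed large enough uniformly for both $r=p$ and $r=q$, which is automatic from $p\le q$ and $q\ge 2$. The cleanest write-up simply assembles the two cases above; the mixed-sign case is where one actually uses the sign hypothesis $a+b\ge 0$ in an essential way, while the same-sign case is just convexity.
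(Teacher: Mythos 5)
Your case split and your same-sign argument are fine, and the conclusion of the mixed-sign case is also correct, but the step you use to reach it does not apply in the generality of the lemma. The telescoping factorization
\[
a^{r-1}-c^{r-1}=(a-c)\sum_{j=0}^{r-2}a^{r-2-j}c^{j}
\]
presupposes that $r-1$ is a positive integer, so that $\sum_{j=0}^{r-2}$ has meaning. In the paper $p$ and $q$ are arbitrary real summability exponents with $q\ge p\ge 2$, so $r\in\{p,q\}$ is in general non-integer, and the factorization you invoke simply does not exist. The statement you are trying to establish in that case, namely $(a-c)^{r-1}\le a^{r-1}-c^{r-1}$ for $a\ge c\ge 0$ (after which $2^{q-2}\ge 1$ finishes), is nonetheless true for all real $r\ge 2$; the clean way to get it without integrality is to observe that $t\mapsto t^{r-1}$ is convex on $[0,\infty)$ and vanishes at $0$, hence is superadditive, so $a^{r-1}=\big((a-c)+c\big)^{r-1}\ge (a-c)^{r-1}+c^{r-1}$. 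Equivalently, one can write $a^{r-1}-c^{r-1}=(r-1)\int_{c}^{a}t^{r-2}\,{\rm d}t$ and bound the integrand from below. You should also note explicitly that if $a=c$ both sides vanish, so the division by $a-c$ is harmless.

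As a comparison: the paper takes a different, shorter route. Since both sides of the target inequality are $(r-1)$-homogeneous and the right-hand side is nonnegative (the observation you also make), one may normalize $a=1$, $b=\tau\ge -1$ and reduce the claim to showing that $f_r(\tau):=\frac{|1+\tau|^{r-2}(1+\tau)}{1+|\tau|^{r-2}\tau}$ is bounded by $2^{r-2}$, which is a one-variable calculus exercise. Your two-case approach is more elementary and arguably more transparent (it isolates exactly where $a+b\ge 0$ is used), but as written it needs the integrality assumption removed from the mixed-sign step before it is a complete proof.
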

\begin{proof}
Firstly, {notice that, $a+b\ge 0$ implies $\snr{a}^{r-2}a+\snr{b}^{r-2}b\ge 0$ and,} by homogeneity, the statement is equivalent to $\snr{1+\tau}^{r-2}(1+\tau)\le 2^{r-2}(1+\snr{\tau}^{r-2}\tau)$ with $\tau\ge -1$ and $r\in\{p,q\}$. For this, it will suffice to study the asymptotics of the function $\dys \tau\mapsto f_{r}(\tau):=\frac{\snr{1+\tau}^{r-2}(1+\tau)}{1+\snr{\tau}^{r-2}\tau}$ for $\tau\ge -1$. It is easy to check that $f_{r}(\tau)\le 2^{r-2}\le 2^{q-2}$, and this will give the desired inequality.
\end{proof}
\begin{lemma}\label{lem_singular}
Let either $p\in (1,2)$ or $q\in(1,2)$, $q\ge p$. Then, for $r\in\{p,q\}$ there holds 
\begin{flalign*}
\left |\snr{a+b}^{r-2}(a+b)-\snr{a}^{r-2}a \right |
\, \le\, (3^{q-1}+2^{q-1})\snr{b}^{r-1}.
\end{flalign*}
\end{lemma}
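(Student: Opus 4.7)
The plan is to prove the inequality in the regime in which it is actually applied in Lemma~\ref{lem_positivity}; that is, for $r\in\{p,q\}$ with $r\in(1,2)$ (if $q\ge 2$ then necessarily only $r=p<2$ is relevant, whereas if $q<2$ then $p\le q<2$ and both exponents fall into the singular range). I will reduce matters to the Hölder-type estimate
\begin{equation*}
\bigl|\snr{a+b}^{r-2}(a+b)-\snr{a}^{r-2}a\bigr|\ \le\ 2^{2-r}\snr{b}^{r-1},
\end{equation*}
which at once implies the statement, since $r>1$ gives $2^{2-r}<2\le 2^{q-1}+3^{q-1}$ for every $q>1$.

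To establish the above bound I would set $\phi(t):=\snr{t}^{r-2}t$, $x:=a+b$, $y:=a$ and distinguish the sign of $xy$. In the \emph{same-sign case}, I may assume without loss of generality that $x,y\ge 0$, and then $\snr{\phi(x)-\phi(y)}=\snr{x^{r-1}-y^{r-1}}$. The key is the elementary subadditivity relation $(\alpha+\beta)^{r-1}\le \alpha^{r-1}+\beta^{r-1}$ for $\alpha,\beta\ge 0$ and $r-1\in(0,1)$; writing $\max\{x,y\}=\min\{x,y\}+\snr{x-y}$ and applying it yields $\snr{x^{r-1}-y^{r-1}}\le\snr{x-y}^{r-1}=\snr{b}^{r-1}$, which is even stronger than what I need.

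In the \emph{opposite-sign case}, I may assume $x\ge 0\ge y$, so that $\snr{\phi(x)-\phi(y)}=x^{r-1}+\snr{y}^{r-1}$. Setting $S:=x+\snr{y}=\snr{x-y}=\snr{b}$, the single-variable function $[0,S]\ni\alpha\mapsto \alpha^{r-1}+(S-\alpha)^{r-1}$ is concave (being the sum of two concave maps, as $r-1<1$) and symmetric about $\alpha=S/2$; hence its maximum is attained at $\alpha=S/2$ with value $2^{2-r}S^{r-1}$, giving $\snr{\phi(x)-\phi(y)}\le 2^{2-r}\snr{b}^{r-1}$. Combining the two cases and using $2^{2-r}\le 2\le 2^{q-1}+3^{q-1}$ closes the proof.

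There is no genuine obstacle here: the estimate is an elementary calculus lemma, and the only thing to be careful about is the bookkeeping of the two sign configurations and the reduction, by the odd symmetry of $\phi$, to $\{x,y\ge 0\}$ in each of them. The constant $3^{q-1}+2^{q-1}$ appearing in the statement is a comfortable (non-sharp) upper bound, chosen solely for consistency with the structural constants used in the proof of Lemma~\ref{lem_positivity}.
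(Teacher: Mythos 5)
Your proof is correct, but it follows a genuinely different route from the paper's. The paper does not give a self-contained argument at all: it simply invokes Lemma~3 of Lindgren~\cite{Lin16} (whose proof splits into the regimes $\snr{b}\ge\snr{a}/2$, handled by the crude bounds $\snr{a+b}^{r-1}\le(3\snr{b})^{r-1}$ and $\snr{a}^{r-1}\le(2\snr{b})^{r-1}$ --- this is where the constant $3^{r-1}+2^{r-1}$ originates --- and $\snr{b}<\snr{a}/2$, handled by the mean value theorem using $r<2$), and then only observes that $q\ge p$ gives $3^{p-1}+2^{p-1}<3^{q-1}+2^{q-1}$, so the larger constant works for both exponents. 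You instead prove the sharper Hölder bound $\snr{\snr{a+b}^{r-2}(a+b)-\snr{a}^{r-2}a}\le 2^{2-r}\snr{b}^{r-1}$ directly, via odd symmetry, subadditivity of $\tau\mapsto\tau^{r-1}$ in the same-sign case, and concavity/symmetry of $\alpha\mapsto\alpha^{r-1}+(S-\alpha)^{r-1}$ in the opposite-sign case; since $2^{2-r}\le 2\le 3^{q-1}+2^{q-1}$ for $q>1$, the stated inequality follows. This buys a self-contained, elementary argument with the optimal constant and no case split on the relative size of $\snr{a}$ and $\snr{b}$, at the cost of not matching the paper's bookkeeping of where $3^{q-1}+2^{q-1}$ comes from. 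Your preliminary restriction to $r\in(1,2)$ is the right reading of the statement: for an exponent $r\ge 2$ the claimed bound is false in general (the left-hand side grows like $\snr{a}^{r-2}\snr{b}$), and in Lemma~\ref{lem_positivity} the lemma is indeed only used for exponents below~$2$; note also that the borderline value $r=2$, which can formally occur there, is covered by your argument verbatim (both steps degenerate to identities), so nothing is lost.
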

\begin{proof}
One can basically follow the proof of Lemma 3 in~\cite{Lin16}, by observing that $q\ge p$ plainly yields  $3^{p-1}+2^{p-1}<3^{q-1}+2^{q-1}$.
\end{proof}
Important consequences of Lemma \ref{revL1} are the following results for $C^{2}$-regular functions.
\begin{lemma}\label{revL}
Let $\varphi\in C^{2}(\mathds{R}^{n})$. If $p\ge 2$. Then,  there holds
\begin{flalign}\label{rev3}
&\snr{\varphi(x)-\varphi(x+y)}^{p-2}(\varphi(x)-\varphi(x+y))+\snr{\varphi(x)-\varphi(x-y)}^{p-2}(\varphi(x)-\varphi(x-y))\nonumber \\
&\quad \le c\snr{y}^{p} \ \ \mbox{for all} \ \ x,y\in \mathds{R}^{n},
\end{flalign}
with $c\equiv c(n,p,\nr{\varphi}_{C^{2}})$.

 Let $1<p<2$. Then,  there holds
\begin{flalign}\label{rev10}
&\snr{\varphi(x)-\varphi(x+y)}^{p-2}(\varphi(x)-\varphi(x+y))+\snr{\varphi(x)-\varphi(x-y)}^{p-2}(\varphi(x)-\varphi(x-y))\nonumber \\
&\quad \le c\snr{y}^{p-1} \ \ \mbox{for all} \ \ x,y\in \mathds{R}^{n},
\end{flalign}
for $c=c(n,p,\nr{\varphi}_{C^{1}})$. 

Moreover, if $a(\cdot,\cdot)$ satisfies~\eqref{a}, then for every~$q>1$ we have
\begin{eqnarray}\label{rev11}
&&a(x,y)\snr{\varphi(x)-\varphi(x+y)}^{q-2}(\varphi(x)-\varphi(x+y))\nonumber \\
&&\qquad\qquad \ \ \ + \, a(x,-y)\snr{\varphi(x)-\varphi(x-y)}^{q-2}(\varphi(x)-\varphi(x-y))\nonumber \\
&&\qquad\qquad \le c\snr{y}^{q-1},
\end{eqnarray}
where $c=c(n,q,\nr{a}_{L^\infty},\nr{\varphi}_{C^{1}})$. 

If $a$ belongs to~$C^{0,\alpha}(\mathds{R}^{n}\times \mathds{R}^{n})$ for some $\alpha \in (0,1]$ and $q\ge 2$, then
\begin{eqnarray}\label{rev4}
&&a(x,y)\snr{\varphi(x)-\varphi(x+y)}^{q-2}(\varphi(x)-\varphi(x+y))\nonumber \\
&&\qquad\qquad \ \ \ + \, a(x,-y)\snr{\varphi(x)-\varphi(x-y)}^{q-2}(\varphi(x)-\varphi(x-y))\nonumber \\
&&\qquad \qquad \le c(\snr{y}^{q}+\snr{y}^{q-1+\alpha}), 
\end{eqnarray}
with $c=c(n,q,\nr{a}_{L^\infty},[a]_{C^{0,\alpha}},\nr{\varphi}_{C^{2}})$.

Finally, if $a(x,y)=a(x,-y)$ for all $x,y\in \mathbb{R}^{n}$ and $q\geq 2$, then
\begin{eqnarray}\label{rev30}
&&a(x,y)\snr{\varphi(x)-\varphi(x+y)}^{q-2}(\varphi(x)-\varphi(x+y))\nonumber \\
&&\qquad\qquad \ \ \ + \, a(x,-y)\snr{\varphi(x)-\varphi(x-y)}^{q-2}(\varphi(x)-\varphi(x-y))\nonumber \\
&&\qquad \qquad \le c\snr{y}^{q},
\end{eqnarray}
holds true for $c=c(n,q,\nr{a}_{L^\infty},\nr{\varphi}_{C^{2}})$.
\end{lemma}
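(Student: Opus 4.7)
My plan is to prove the five estimates of Lemma \ref{revL} by a common recipe pairing an algebraic power-difference inequality with either a first- or second-order Taylor expansion of $\varphi$. The decomposition is already present in spirit in the proof of Proposition \ref{prop_cumbersome}.

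For \eqref{rev3} and \eqref{rev10} I would set $a:=\varphi(x-y)-\varphi(x)$ and $b:=\varphi(x+y)-2\varphi(x)+\varphi(x-y)$, so that the left-hand side of both inequalities is exactly $|a+b|^{p-2}(a+b)-|a|^{p-2}a$. In the superquadratic case $p\geq 2$, Lemma \ref{revL1} bounds this quantity by $(p-1)|b|(|a|+|b|)^{p-2}$; since $|b|\leq \|D^{2}\varphi\|_{L^\infty}|y|^{2}$ is a second difference and $|a|\leq \|D\varphi\|_{L^\infty}|y|$, the range $|y|\leq 1$ immediately yields the desired $c|y|^{p}$. For $|y|>1$ I would switch to the crude termwise bound $|\varphi(x)-\varphi(x\pm y)|^{p-1}\leq \|D\varphi\|_{L^\infty}^{p-1}|y|^{p-1}\leq \|D\varphi\|_{L^\infty}^{p-1}|y|^{p}$. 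For $1<p<2$ the same $a,b$ work, but now Lemma \ref{lem_singular} (with $r=p$) bounds the difference directly by $c|b|^{p-1}$, and the purely $C^{1}$-estimate $|b|\leq 2\|D\varphi\|_{L^\infty}|y|$ is enough to conclude; this matches the announced dependence of the constant on $\|\varphi\|_{C^{1}}$ only.

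The estimate \eqref{rev11} is a direct termwise bound: $|a(x,\pm y)|\leq M$ and $|\varphi(x)-\varphi(x\pm y)|\leq \|D\varphi\|_{L^\infty}|y|$ control each summand by $cM\|D\varphi\|_{L^\infty}^{q-1}|y|^{q-1}$, with no cancellation or symmetry of $a$ required. The core of the lemma is then \eqref{rev4}, for which the key step is the algebraic identity
\begin{equation*}
a(x,y)D_{+}+a(x,-y)D_{-}\,=\,a(x,y)(D_{+}+D_{-})\,+\,\big(a(x,-y)-a(x,y)\big)D_{-},
\end{equation*}
where $D_{\pm}:=|\varphi(x)-\varphi(x\pm y)|^{q-2}(\varphi(x)-\varphi(x\pm y))$. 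Since $q\geq 2$, the already-proved \eqref{rev3} applies and gives $|D_{+}+D_{-}|\leq c|y|^{q}$, which combined with $|a(x,y)|\leq M$ accounts for the $c|y|^{q}$ contribution. For the second piece I would use the H\"older bound $|a(x,-y)-a(x,y)|\leq [a]_{C^{0,\alpha}}|2y|^{\alpha}$ together with the trivial $|D_{-}|\leq \|D\varphi\|_{L^\infty}^{q-1}|y|^{q-1}$, producing the $c|y|^{q-1+\alpha}$ contribution. Finally, \eqref{rev30} falls out of the same decomposition: the symmetry $a(x,y)=a(x,-y)$ kills the cross term, leaving only $a(x,y)(D_{+}+D_{-})$, which \eqref{rev3} bounds by $cM|y|^{q}$.

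The only mildly delicate point, and therefore the main obstacle such as it is, is the split $|y|\leq 1$ vs.\ $|y|>1$ used in \eqref{rev3}: one has to reconcile the sharp second-difference estimate on small scales with the non-sharp first-difference estimate on large scales, and one must check that the latter is compatible with the announced power $|y|^{p}$. Once this is handled correctly, the derivative of $\varphi$ appearing in each constant is exactly what the statement predicts, and all the other estimates fall out mechanically from the chosen decomposition and the correct power-difference lemma.
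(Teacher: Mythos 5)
Your argument is correct and follows essentially the same route as the paper: the second-difference decomposition fed into Lemma \ref{revL1} for the superquadratic estimates, crude first-order (Lagrange-type) bounds where only $\|\varphi\|_{C^{1}}$ is needed, and the add-and-subtract of $a(x,y)$ times the $(x-y)$-term combined with the H\"older seminorm of $a$ for \eqref{rev4}, respectively the symmetry of $a$ for \eqref{rev30}. The only deviations are cosmetic: your $b$ should be $2\varphi(x)-\varphi(x+y)-\varphi(x-y)$ (the negative of the second difference) for the claimed identity to hold exactly, though only $\snr{b}$ enters the estimates, and for \eqref{rev10} the paper simply bounds the two terms separately via the mean value theorem instead of invoking Lemma \ref{lem_singular}.
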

\begin{proof}
Set
\begin{flalign*} &A:=-(\varphi(x)-\varphi(x+y)),\\ &B:=(\varphi(x)-\varphi(x+y))+(\varphi(x)-\varphi(x+y)). \end{flalign*}
In these terms, we can apply Lemma \ref{revL1} to get
\begin{eqnarray*}
&&\snr{\varphi(x)-\varphi(x+y)}^{p-2}(\varphi(x)-\varphi(x+y))+\snr{\varphi(x)-\varphi(x-y)}^{p-2}(\varphi(x)-\varphi(x-y))\nonumber \\
&&\qquad\qquad\qquad= \ \snr{A+B}^{p-2}(A+B)-\snr{A}^{p-2}A\\
&&\qquad\qquad\qquad\le \ (p-1)\snr{B}(\snr{A}+\snr{B})^{p-1}\\
&&\qquad\qquad\qquad\le \ c\snr{y}^{p},
\end{eqnarray*}
for all $x,y\in \mathds{R}^{n}$. Here $c=c(n,p,\nr{\varphi}_{C^{2}})$, which is \eqref{rev3}. For \eqref{rev10}, we only need Lagrange's Theorem to obtain
\begin{eqnarray*}
&&\snr{\varphi(x)-\varphi(x+y)}^{p-2}(\varphi(x)-\varphi(x+y))+\snr{\varphi(x)-\varphi(x-y)}^{p-2}(\varphi(x)-\varphi(x-y))\nonumber \\
&&\qquad\qquad\qquad \quad \le c\snr{y}^{p-1},
\end{eqnarray*}
with $c=c(n,p,\nr{\varphi}_{C^{1}})$. On the other hand, due to the presence of $a(\cdot,\cdot)$, which is a priori only bounded, we cannot apply the same procedure to the $q$-part of our operator, therefore, under \eqref{a} we can only invoke Lagrange's Theorem again to get 
\begin{eqnarray*}
&&a(x,y)\snr{\varphi(x)-\varphi(x+y)}^{q-2}(\varphi(x)-\varphi(x+y))\nonumber\\
&&\qquad\qquad \ \ \ + \, a(x,-y)\snr{\varphi(x)-\varphi(x-y)}^{q-2}(\varphi(x)-\varphi(x-y))\nonumber \\
&&\qquad \qquad \le c\snr{y}^{q-1} \ \ \mbox{for all} \ \ x,y\in \mathds{R}^{n},
\end{eqnarray*}
with $c=c(n,q,\nr{a}_{L^\infty},\nr{\varphi}_{C^{1}})$. Nonetheless, if we assume extra regularity for $a(\cdot,\cdot)$, we can exploit again Lemma \ref{revL1} to obtain
\begin{eqnarray*}
&&a(x,y)\snr{\varphi(x)-\varphi(x+y)}^{q-2}(\varphi(x)-\varphi(x+y))\nonumber\\
&&\qquad\qquad \ \ \, +\, a(x,-y)\snr{\varphi(x)-\varphi(x-y)}^{q-2}(\varphi(x)-\varphi(x-y))\nonumber \\ 
&&\qquad \qquad \ \ \, \pm \, a(x,y)\snr{\varphi(x)-\varphi(x-y)}^{q-2}(\varphi(x)-\varphi(x-y))\nonumber \\ 
&&\qquad\qquad \le \, a(x,y)\left(\snr{A+B}^{q-2}(A+B)-\snr{A}^{q-2}B\right)\nonumber \\ 
&&\qquad \qquad \, \ \ +\, 2[a]_{C^{0,\alpha}}\snr{y}^{\alpha}\snr{\varphi(x)-\varphi(x-y)}^{q-1}
\, \le\,  c(\snr{y}^{q}+\snr{y}^{q-1+\alpha}),
\end{eqnarray*}
where $c=c(n,q,\nr{a}_{L^\infty},[a]_{C^{0,\alpha}},\nr{\varphi}_{C^{2}})$. Notice that, when $\snr{y}\le 1$, the last term in~\eqref{rev4} can be bounded by $c\snr{y}^{q-1+\alpha}$. 

Finally, we conclude the proof by stressing that the procedure employed to obtain~\eqref{rev3} can be repeated verbatim when $q\ge2$ provided that $a(x,y)=a(x,-y)$ for all $x,y \in \mathbb{R}^{n}$. In such a case, $a(\cdot
,\cdot)\ge 0$ does reduce to a multiplicative factor allowing again the application of Lemma \ref{revL1}; we have
\begin{eqnarray*}
&&a(x,y)\snr{\varphi(x)-\varphi(x+y)}^{q-2}(\varphi(x)-\varphi(x+y))\nonumber \\
&&\qquad\qquad \ \ \, +\, a(x,-y)\snr{\varphi(x)-\varphi(x-y)}^{q-2}(\varphi(x)-\varphi(x-y))\nonumber\\
&&\qquad\qquad=a(x,y)\left(\snr{\varphi(x)-\varphi(x+y)}^{q-2}(\varphi(x)-\varphi(x+y))\right.\nonumber \\
&&\qquad \qquad \ \ \, \left.+\,\snr{\varphi(x)-\varphi(x-y)}^{q-2}(\varphi(x)-\varphi(x-y))\right)
\ \le\ c\snr{y}^{q},
\end{eqnarray*}
with $c=c(n,q,\nr{a}_{\infty},\nr{\varphi}_{C^{2}})$, which is \eqref{rev30}.
\end{proof}

\begin{lemma}\label{revL2}
Let $B_{\rr}\subset \mathds{R}^{n}$ be any ball centered in the origin with radius $\rr\le 1$ and $\varphi \in C^{2}(\mathds{R}^{n})$. Then, if $p\ge 2$ we have
\begin{flalign}\label{rev5}
\int_{B_{\rr}}&\left |\snr{\varphi(x)-\varphi(x+y)}^{p-2}(\varphi(x)-\varphi(x+y))\right.\nonumber \\
&\left.+\snr{\varphi(x)-\varphi(x-y)}^{p-2}(\varphi(x)-\varphi(x-y)) \right |K_{sp}(x,y) \dy\le c<\infty,
\end{flalign}
for $c=c(n,\Lambda,p,\nr{\varphi}_{C^{2}(\mathds{R}^{n})})$. If ${1}/{(1-s)}<p<2$, there holds 
\begin{flalign}\label{rev9}
\int_{B_{\rr}}&\left |\snr{\varphi(x)-\varphi(x+y)}^{p-2}(\varphi(x)-\varphi(x+y))\right.\nonumber \\
&\left.+\snr{\varphi(x)-\varphi(x-y)}^{p-2}(\varphi(x)-\varphi(x-y)) \right |K_{sp}(x,y)  \dy\le c<\infty,
\end{flalign}
with $c=c(n,\Lambda,p,\nr{\varphi}_{C^{1}(\mathds{R}^{n})})$. Moreover, under assumption \eqref{a} we have
\begin{flalign}\label{rev6}
\int_{B_{\rr}}&\left |a(x,y)\snr{\varphi(x)-\varphi(x+y)}^{q-2}(\varphi(x)-\varphi(x+y))\right.\nonumber \\
&\left.+a(x,-y)\snr{\varphi(x)-\varphi(x-y)}^{q-2}(\varphi(x)-\varphi(x-y)) \right |K_{tq}(x,y)  \dy\le c<\infty,
\end{flalign}
for $c=c(n,\Lambda,q,\nr{a}_{L^\infty},\nr{\varphi}_{C^{1}})$.

If $a\in C^{0,\alpha}(\mathds{R}^{n}\times \mathds{R}^{n})$ for some $\alpha \in (0,1]$ and $q\ge \max\big\{2,\, {(1-\alpha)}{(1-t)}\big\}$, we have
\begin{flalign}\label{rev8}
\int_{B_{\rr}}&\left |a(x,y)\snr{\varphi(x)-\varphi(x+y)}^{q-2}(\varphi(x)-\varphi(x+y))\right.\nonumber \\
&\left.+a(x,-y)\snr{\varphi(x)-\varphi(x-y)}^{q-2}(\varphi(x)-\varphi(x-y)) \right |K_{tq}(x,y)  \dy\le c<\infty,
\end{flalign}
is satisfied. Here $c=c(n,\Lambda,q,\nr{a}_{L^\infty},[a]_{C^{0,\alpha}},\nr{\varphi}_{C^{2}})$.

Finally, if $a\in L^{\infty}(\mathbb{R}^{n}\times \mathbb{R}^{n})$ symmetric in the sense that $a(x,y)=a(x,-y)$ for all $x,y\in \mathbb{R}^{n}\times \mathbb{R}^{n}$, and $q\geq 2$, then
\begin{flalign}\label{rev31}
\int_{B_{\rr}}&\left |a(x,y)\snr{\varphi(x)-\varphi(x+y)}^{q-2}(\varphi(x)-\varphi(x+y))\right.\nonumber \\
&\left.+a(x,-y)\snr{\varphi(x)-\varphi(x-y)}^{q-2}(\varphi(x)-\varphi(x-y)) \right |K_{tq}(x,y) \ \dy\le c<\infty
\end{flalign}
holds true with $c=c(n,\Lambda,q,\nr{a}_{\infty},\nr{\varphi}_{C^{2}})$.

\end{lemma}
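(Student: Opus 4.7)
The plan is to reduce each of \eqref{rev5}--\eqref{rev31} to checking the local integrability, near $y=0$, of an elementary power of $|y|$, by combining the pointwise bounds on the symmetrized integrand provided by Lemma~\ref{revL} with the kernel upper bounds in \eqref{K}. The role of the symmetrization $y \mapsto -y$ is essential: without it the singular integrals would only be conditionally convergent, but the pairing of $y$ and $-y$ inside the absolute value lets us quantify the cancellation exactly via Lemma~\ref{revL1} (or, in the singular case, via Lagrange's theorem).

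First I would treat \eqref{rev5}. Inserting \eqref{rev3} into the integrand and using the upper bound $K_{sp}(x,y) \le \Lambda |y|^{-n-sp}$ from \eqref{K}, the left-hand side is dominated by $c\, \Lambda \int_{B_\rho} |y|^{p-n-sp}\,\dy$, which is finite since $p(1-s) > 0$. For \eqref{rev9} the same strategy with \eqref{rev10} in place of \eqref{rev3} leads to $c\,\Lambda\int_{B_\rho}|y|^{p-1-n-sp}\,\dy$; here the threshold $p > 1/(1-s)$ is exactly what ensures $p-1-sp > 0$, so the integral converges. The estimate \eqref{rev6} is handled identically via \eqref{rev11} and the assumption $q > 1/(1-t)$ from \eqref{pq}$_2$, which guarantees convergence of $\int_{B_\rho}|y|^{q-1-n-tq}\,\dy$.

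For \eqref{rev8}, I would invoke \eqref{rev4}, producing two terms of the form $c(|y|^q + |y|^{q-1+\alpha})|y|^{-n-tq}$. The first requires only $q(1-t) > 0$, while the second requires $q-1+\alpha-tq > 0$, i.e. $q > (1-\alpha)/(1-t)$, which is precisely the hypothesis on~$q$. Finally, \eqref{rev31} follows by combining \eqref{rev30} with \eqref{K} and the trivial convergence of $\int_{B_\rho}|y|^{q-n-tq}\,\dy$ when $q \ge 2$ and $t<1$.

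The only mildly delicate point is the case \eqref{rev9}: here the pointwise bound \eqref{rev10} is only of order $|y|^{p-1}$ rather than $|y|^p$, because in the singular range $1<p<2$ the map $\tau \mapsto |\tau|^{p-2}\tau$ is no longer $C^1$ at the origin and one cannot exploit a second-order Taylor cancellation. This is the reason why the restriction $p > 1/(1-s)$ (and analogously $q > 1/(1-t)$ in \eqref{rev6}) is unavoidable when $a(\cdot,\cdot)$ is merely bounded, while no such lower bound is needed when $p \ge 2$, which explains the structure of the assumption~\eqref{pq} and the compatibility with Remark~\ref{revr}.
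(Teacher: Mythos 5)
Your argument is correct and follows the same route as the paper's own proof: combine the pointwise bounds \eqref{rev3}, \eqref{rev10}, \eqref{rev11}, \eqref{rev4}, \eqref{rev30} from Lemma~\ref{revL} with the upper kernel bound in \eqref{K}, reducing each case to local integrability of $|y|^{\gamma-n}$ near the origin for the appropriate $\gamma>0$. Your explicit verification of the exponent thresholds (e.g.\ $p-1-sp>0 \Leftrightarrow p>1/(1-s)$, and $q-1+\alpha-tq>0 \Leftrightarrow q>(1-\alpha)/(1-t)$) matches the intended computation and is actually written out more carefully than in the paper, where the factor $|y|^{-n-tq}$ is dropped by typo in the last two displays.
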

\begin{proof}
Let $B_{\rr}\subset \mathds{R}^{n}$ be any ball centered in the origin with radius $\rr\le 1$. If $p\ge 2$, recalling 
\eqref{K} and \eqref{rev3},
\begin{flalign*}
\int_{B_{\rr}}&\left |\snr{\varphi(x)-\varphi(x+y)}^{p-2}(\varphi(x)-\varphi(x+y))\right.\nonumber \\
&\left.+\snr{\varphi(x)-\varphi(x-y)}^{p-2}(\varphi(x)-\varphi(x-y)) \right |K_{sp}(x,y) \dy\nonumber \\
&\quad \le c\int_{B_{\rr}}\snr{y}^{p-n-sp}  \dy<c<\infty,
\end{flalign*}
where $c=c(n,\Lambda,p,\nr{\varphi}_{C^{2}(\mathds{R}^{n})})$. If $p<2$, by \eqref{rev10} and $\eqref{pq}_{1}$ we have
\begin{flalign*}
\int_{B_{\rr}}&\left |\snr{\varphi(x)-\varphi(x+y)}^{p-2}(\varphi(x)-\varphi(x+y))\right.\nonumber \\
&\left.+\snr{\varphi(x)-\varphi(x-y)}^{p-2}(\varphi(x)-\varphi(x-y)) \right |K_{sp}(x,y) \dy\nonumber \\
&\quad \le c\int_{B_{\rr}}\snr{y}^{p-1-n-sp}  \dy<c<\infty,
\end{flalign*}
with $c=c(n,\Lambda,p,\nr{\varphi}_{C^{1}})$. Furthermore, if we assume \eqref{a}, by $\eqref{pq}_{2}$, \eqref{K} and \eqref{rev11} it follows that
\begin{flalign*}
\int_{B_{\rr}}&\left |a(x,y)\snr{\varphi(x)-\varphi(x+y)}^{q-2}(\varphi(x)-\varphi(x+y))\right.\nonumber \\
&\left.+a(x,-y)\snr{\varphi(x)-\varphi(x-y)}^{q-2}(\varphi(x)-\varphi(x-y)) \right |K_{tq}(x,y)  \dy\nonumber \\
&\quad \le c\int_{B_{\rr}}\snr{y}^{q-1-n-tq}  \dy\, \le\, c,
\end{flalign*}
for $c=c(n,\Lambda,q,\nr{a}_{L^\infty},\nr{\varphi}_{C^{1}})$. Clearly, in this case it does not matter if $q<2$ or $q\ge 2$. 

Finally, if $q\ge 2$ and in addition $a$ belongs to $C^{0,\alpha}(\mathds{R}^{n}\times \mathds{R}^{n})$, from \eqref{rev4} we get
\begin{flalign*}
\int_{B_{\rr}}&\left |a(x,y)\snr{\varphi(x)-\varphi(x+y)}^{q-2}(\varphi(x)-\varphi(x+y))\right.\nonumber \\
&\left.+a(x,-y)\snr{\varphi(x)-\varphi(x-y)}^{q-2}(\varphi(x)-\varphi(x-y)) \right |K_{tq}(x,y) \ \dy\nonumber \\
&\quad \le c\int_{B_{\rr}}\snr{y}^{q-1+\alpha}  \dy
\, \le\, c,
\end{flalign*}
provided that also $q>{(1-\alpha)}{(1-t)}$. Here, $c=c(n,\Lambda,q,\nr{a}_{L^\infty},[a]_{C^{0,\alpha}},\nr{\varphi}_{C^{2}})$.

Finally, concerning the estimate in~\eqref{rev31}, by recalling \eqref{rev30} and \eqref{K}, we can conclude that
\begin{flalign*}
\int_{B_{\rr}}&\left |a(x,y)\snr{\varphi(x)-\varphi(x+y)}^{q-2}(\varphi(x)-\varphi(x+y))\right.\nonumber \\
&\left.+a(x,-y)\snr{\varphi(x)-\varphi(x-y)}^{q-2}(\varphi(x)-\varphi(x-y)) \right |K_{tq}(x,y) \ \dy\nonumber \\
&\quad \le c\int_{B_{\rr}}\snr{y}^{q} \ \dy\le c,
\end{flalign*}
for $c=c(n,\Lambda,q,\nr{a}_{\infty},\nr{\varphi}_{C^{2}})$.
\end{proof}
\vspace{4mm}

\vspace{2mm}

\end{document}